\makeatletter \@addtoreset{equation}{section} \makeatother
\renewcommand\thefigure{\thesection.\@arabic\c@figure}
\renewcommand\thetable{\thesection.\@arabic\c@table}
\newtheorem{theorem}{Theorem}[section]
\newtheorem{lemma}[theorem]{Lemma}
\newtheorem{proposition}[theorem]{Proposition}
\newtheorem{corollary}[theorem]{Corollary}
\theoremstyle{definition}
\newtheorem{remark}[theorem]{Remark}
\newcommand{\mc}[1]{{\mathcal #1}}
\newcommand{\bb}[1]{{\mathbb #1}}
\renewcommand{\Re}{\operatorname{Re}}
\renewcommand{\Im}{\operatorname{Im}}
\newcommand{\<}{\langle}
\renewcommand{\>}{\rangle}
\DeclareMathOperator*\varlim{\vphantom{p}lim}
\newcommand{\N}{\mathcal{N}}
\newcommand{\ud}{\mathrm{d}}
\title{{Thermalisation for  Small Random Perturbations of Dynamical Systems}}
\author{Barrera, Gerardo}
\address{University of Alberta, Department of Mathematical and Statistical Sciences. Central Academic Building. $116$ Street and $85$ Avenue. Postal Code: T$6$G--$2$G$1$. Edmonton, Alberta, Canada.}
\email{barrerav@ualberta.ca }
\author{Jara, Milton}
\address{Instituto de Matem\'atica Pura e Aplicada, IMPA. Estrada Dona Castorina $110$, Jardim Bot\^anico. Postal Code: $22460$--$320$.
Rio de Janeiro, Rio de Janeiro, Brazil.}
\email{monets@impa.br}
\keywords{Thermalisation, Cut--off Phenomenon, Total Variation Distance, Brownian Motion, Hartman--Grobman Theorem, Hyperbolic Fixed Point, Perturbed Dynamical Systems, Stochastic Differential Equations.}
\subjclass[2000]{60K35,60G60,60F17,35R60}
\begin{document}
\maketitle
\begin{abstract}
We consider an ordinary differential equation  with a unique hyperbolic attractor at the origin, to which we add a small random perturbation.  It is known that under general conditions, the solution of this stochastic differential equation converges exponentially fast to an equilibrium distribution.
We show that the convergence occurs abruptly: in a time window of small size compared to the natural time scale of the process, the distance to equilibrium drops from its maximal possible value to near zero, and only after this time window the convergence is exponentially fast. This is what is known as the cut-off phenomenon in the context of Markov chains of increasing complexity. In addition,
we are able to give general conditions to decide whether the distance to equilibrium converges in this time window to a universal function, a fact known as profile cut-off.
\end{abstract}

\markboth{Thermalisation for Perturbations of Dynamical Systems}{Thermalisation for Perturbations of Dynamical Systems}
\section{Introduction}
Our main goal is the study of the convergence to equilibrium for a family of stochastic small random perturbations of a given dynamical system in $\mathbb{R}^d$. 
Consider an ordinary differential equation with a unique hyperbolic global attractor. 
Without loss of generality, we assume that the global attractor is located at the origin. 
Under general conditions, as time goes to infinity, any solution of this differential equation approaches the origin exponentially fast. 
We perturb the deterministic dynamics by a Brownian motion of small intensity.
It is well known that, again under very general conditions, as time goes to infinity, any solution of this stochastic differential equation converges in distribution to an equilibrium law.
The convergence can be improved to hod with respect to the total variation distance.
The theory of Lyapunov functions allows to show that this convergence, for each fixed perturbation, is again exponentially fast. We show that the convergence occurs  abruptly: 
when the intensity of the noise goes to zero, the total variation distance between the law of the stochastic dynamics and the law of its equilibrium in a time window around the cut-off time decreases from one to near zero abruptly, 
and only after this time window the convergence is exponentially fast. 
This fact is known as cut-off phenomenon.
Moreover,
when a properly normalised $\omega$-limit set of the initial datum of the deterministic differential equation is contained in a sphere,
we are able to prove convergence of the distance to equilibrium to a universal function, a fact known as profile cut-off or profile thermalisation in the context of ergodic Markov processes.

To be more precise, we are concerned about the abrupt convergence to equilibrium in the total variation distance for systems of the form:
\begin{equation}\label{over}
\left\{
\begin{array}{r@{\;=\;}l}
\ud x^{\epsilon}(t)&-F(x^{\epsilon}(t))\ud t + \sqrt{\epsilon}\ud B(t)\quad  
\textrm{  for } t\geq 0,\\
x^{\epsilon}(0) & x_0.
\end{array}
\right.
\end{equation}
where $F$ is a given vector field with a unique hyperbolic fixed point and $\{B(t):t\geq 0\}$ is a standard Brownian motion.
Notice that systems described by the stochastic differential equation \eqref{over} are not necessarily reversible.
In statistical physics, equation \eqref{over} is known as an {\em overdamped Langevin dynamics}, and it is used to model fluctuations of stationary states.

In the small noise asymptotics, the stochastic dynamics \eqref{over} fluctuates around the attractor of the deterministic dynamics which is called {\em{relaxation dynamics}} or {\em{zero-noise dynamics}}. Assuming that the deterministic dynamics is 
{\em{strongly coercive}} together with some 
{\em{growth condition}} on $F$,
when the intensity $\epsilon$ of the noise goes to zero, in a time windows of small size compared to the natural time scale  of the process, the total variation distance to equilibrium drops 
 from near one to near zero.

Dynamical systems subjected to small Gaussian perturbations have been studied extensively,
see the book of M.~Freidlin \& A.~Wentzell \cite{FW} which discusses this problem in
great detail; see also  M.~Freidlin \& A.~Wentzell \cite{FW1}, \cite{FW2}, M.~Day \cite{MD}, \cite{MD1} and W.~Siegert \cite{SI}.
This treatment has inspired many works and
considerable effort was concerned about purely local phenomena, {\em{i.e.}},
on the computation of exit times and exit probabilities from neighbourhoods of fixed
points that are carefully stipulated not to contain any other fixed point of
the deterministic dynamics.

The theory of large deviations allows to solve the exit problem from the domain of attraction of a stable point.
It turns out that the mean exit time is exponentially large in the small noise parameter, and its logarithmic rate is proportional to the height of the potential barrier that the trajectories have to overcome.
Consequently, for a multi-well potential one can obtain a series of exponentially non-equivalent time scales given by the wells-mean exit times. Moreover, the normalised exit times are asymptotically exponentially distributed and have a memoryless property, for further details see A.~Galves, E.~Olivieri \& M.~Vares \cite{GOV}, E.~Olivieri \& M.~Vares \cite{OV} and C.~Kipnis \& C.~Newman \cite{KN}.
There are situations in which the analysis at the level of large deviations is not enough,
and it is necessary the study of distributional scaling limits for the exit distributions, for more details see Y.~Bakhtin \cite{YB1}
and \cite{YB2}.

The cut-off phenomenon was extensively studied 
in the eighties to describe the phenomenon of abrupt convergence that appears in models of card shuffling, Ehrenfest urns and random transpositions, see for instance D.~Aldous \& P.~Diaconis  \cite{AD} and \cite{AD1}. In general, it is a challenging problem to prove  that an specific family of stochastic models exhibit or does not exhibit a  cut-off phenomenon. It requires a complete understanding of the dynamics of the specific random process. 

Since the appearance of \cite{AD} many families of stochastic processes have been shown to have similar properties.
Various notions of cut-off have been proposed; see
J.~Barrera \& B.~Ycart
\cite{BY} and
P.~Diaconis \cite{DI} for an account. 
We refer to the book of D.~Levin et al. (\cite{LPW}, Chapter $18$) for an introduction of the subject in the Markov chain setting, L.~Saloff-Coste \cite{SA} provides an extensive list of random walks for which the cut-off phenomenon holds,  P.~Diaconis \cite{DI} for a review on the finite Markov chain case, S.~Mart\'inez and B.~Ycart \cite{MY} for the case of Markov chains with countably infinite state space,  G.~Chen and L.~Saloff-Coste \cite{CSC} for Brownian motions on a compact Riemann manifold, B.~Lachaud \cite{BL} and G.~Barrera \cite{BA} for Ornstein-Uhlenbeck processes on the line and G.~Barrera and M.~Jara \cite{BJ} for stochastic small perturbations of one-dimensional dynamical systems.

Roughly speaking, thermalisation or window cut-off holds for a family of stochastic systems, when convergence to equilibrium happens in a time window which is small compared to the total running time of the system.
Before a certain ``cut-off time'' those processes stay far from equilibrium with respect to some suitable distance; in a time window of smaller order the processes get close to equilibrium, and after a time window that convergence to equilibrium happens exponentially fast.

Alternative names are threshold phenomenon and abrupt convergence.
When the distance to equilibrium at the time window can be well approximated by some profile function, we speak about profile cut-off.
Sequences of stochastic processes for which an explicit profile cut-off can be determined are scarce. Explicit profiles are usually out of reach, in particular for the total variation distance.
In general, the existence of the phenomenon is proven through a precise
estimation of the sequence of cut-off times and this precision comes at a high technical
price, for more details see J.~Barrera, O.~Bertoncini \& R.~Fern\'andez \cite{BBF}.

The main result of this article, Theorem \ref{main}, states that when the deterministic dynamics is
{\em{strongly coercive}} and satisfies some {\em{growth condition}}, the family of perturbed dynamics presents a thermalisation (windows cut-off) as we describe in Section \ref{notres}.
Moreover, in Corollary \ref{pt} and Corollary \ref{dpt} we give a necessary and sufficient condition for having profile thermalisation (profile cut-off).
We point out that our condition is always satisfied by reversible dynamics; {\em{i.e.}}, when $F(x)=\nabla V(x)$, $x\in \mathbb{R}^d$, and also for a large class of dynamics that are non-reversible.
Non-reversible dynamics naturally appear for example in polymeric fluid dynamics or Wigner-Fokker-Planck equations, see
A.~Arnold, J.~Carrillo \& C.~Manzini \cite{ACM} and B.~Jourdain, C.~Le Bris, T.~Leli\`evre \& F.~Otto \cite{JBLO}.
Non-reversible systems arise in the theory of activated process in glasses and other disordered materials, chemical reactions far from equilibrium, stochastic modelled computer networks, evolutionary biology and  theoretical ecology, see R.~Maier \& D.~Stein \cite{MS1} and \cite{MS2}.

Notice that the set of symmetric matrices is not open. In particular, reversibility is not a generic property of dynamical systems. On the other hand, hyperbolicity is an open property, meaning that it is stable under small perturbations of the vector field.
Moreover, in general for the non-reversible case, there is not an explicit formula for the invariant measure of the random dynamics \eqref{over} as in the reversible case. 
For reversible dynamics, analytic methods from quantum mechanics have been used to compute asymptotic expansions in the diffusivity $\sqrt{\epsilon}$. The strong point is that full asymptotic expansions in $\sqrt{\epsilon}$ and sharp estimates can be done. However, so far only applicable for reversible diffusion process. For more details, see \cite{BEGK} and \cite{BEGK1}.
Therefore, it is desirable to have a treatment that does not rely on these properties, namely reversibility and/or explicit knowledge of invariant measures.

Our idea is to carry out this asymptotic expansion in $\sqrt \epsilon$ by probabilistic methods. It turns out that the hyperbolicity of the underlying dynamics can be used to show that a second-order expansion gives a description of the original dynamics which is good for times much larger than the time at which equilibration occurs. This expansion is the same introduced in \cite{BJ}. Notice that in the one-dimensional case, the stochastic dynamics is always reversible. Therefore, profile cut-off always holds and a more refined analysis of this expansion is needed in order to be able to discern whether profile cut-off holds or not. A consequence of our analysis is an $L^1$-version of the local central limit theorem for the invariant measure of \eqref{over}, which could be of independent interest.

This material is organized as follows. Section \ref{notres} describes the model and states the main result besides establishing the basic notation and definitions. Section \ref{mulana} provides sharp estimates on the asymptotics of related linear approximations which are the main ingredient in order to prove the main result in the end of this section. 
Finally, we provide an Appendix which is divided in three sections as follows:
Section \ref{pgdis} gives useful properties for the total variation distances between Gaussian distributions. Section
\ref{B.tomate} and \ref{apend3} provide the rigorous arguments about the deterministic dynamics  and
the stochastic dynamics,   respectively,
 that we omit in Section \ref{mulana} to make the presentation more fluid.

\section{Notation and results}\label{notres}
In this section we rigorously state the family of stochastically perturbed dynamical systems that we are considering and the results we prove.

\subsection{The dynamical system}
Let $F:\bb R^d \to \bb R^d$ be a vector field of class $\mc C^2(\bb R^d,\bb R^d)$. For each $x \in \bb R^d$, let $\{\varphi(t,x): t\in [0,\tau_x)\}$ be the solution of the deterministic differential equation:
\begin{equation}
\label{EDO}
\left\{
\begin{array}{r@{\;=\;}l}
\tfrac{\ud}{\ud t}\varphi(t) & - F(\varphi(t))\quad \textrm{ for }
 0\leq t <\tau_x,\\
\varphi(0) & x
\end{array}
\right.
\end{equation}
where $\tau_x$ denotes the {\em explosion time}.
Since $F$ is smooth, this equation has a unique solution.
Since we have not imposed any growth condition on $F$, $\tau_x$ may be finite.
We denote by $\|\cdot\|$ the Euclidean norm in $\mathbb{R}^d$ and by $\<\cdot,\cdot\>$ the standard inner product of $\mathbb{R}^d$.
Under the condition
\[
\sup\limits_{z\in \bb R^d}{\frac{\<z,-F(z)\>}{1+\|z\|^2}}<+\infty,
\]
an straightforward application of the
Lemma \ref{gin} (Gronwall's inequality) 
 implies that
the explosion time $\tau_x$ is infinite for any $x\in \bb R^d$. Later on, we will make stronger assumptions on $F$, so we will assume that the explosion time is always infinite without further comments.

We call the family $\{\varphi(t,x): t \geq 0, x \in \bb R^d\}$ the {\em dynamical system} associated to $F$.
We say that a point $y\in \bb R^d$ is a {\em fixed point} of \eqref{EDO} if $F(y)=0$.
In that case $\varphi(t,y)=y$ for any $t\geq 0$.

Let $y$ be a fixed point of \eqref{EDO}. We say that $x \in \bb R^d$ belongs in the {\em basin of attraction} of $y$ if
\[
\lim_{t \to +\infty} \varphi(t,x) =y.
\]
We say that $y$ is an {\em attractor} of \eqref{EDO} if the set
\[
U_y = \{ x \in \bb R^d: x \text{ is in the basin of attraction of } y\}
\]
contains an open ball centered at $y$. If $U_y = \bb R^d$ we say that $y$ is a {\em global attractor} of \eqref{EDO}. We say that $y$ is a {\em hyperbolic} fixed point of
\eqref{EDO} if $\mathrm{Re}(\lambda) \neq 0$ for any eigenvalue $\lambda$ of the Jacobian matrix $DF(y)$. By the Hartman-Grobman Theorem (see Theorem (Hartman) page $127$ of \cite{PE} or the celebrated paper of P.~Hartman \cite{HA}), a hyperbolic fixed point $y$ of \eqref{EDO} is an attractor if and only if $\mathrm{Re}( \lambda) >0$ for any eigenvalue $\lambda$ of the matrix $DF(y)$. From now on, we will always assume that
\[
0   \text{ is a hyperbolic attractor of }   \eqref{EDO}.
\]
In that case, for any $x \in U_0$ the asymptotic behaviour of $\varphi(t,x)$ as $t \to +\infty$ can be described in a very precise way.

A sufficient condition for $0$ to be a global attractor of \eqref{EDO} is the following {\em coercivity} condition: there exists a positive constant $\delta$ such that
\begin{equation}
\tag{C}
\label{C1}
\<x,F(x)\> \geq \delta \|x\|^2\quad \text{ for any } x \in \bb R^d.
\end{equation}
Notice that
\[
\frac{\ud }{\ud t}\|\varphi(t)\|^2=2\<\varphi(t),\frac{\ud }{\ud t}\varphi(t)\>=\<\varphi(t),-F(\varphi(t))\>\leq -2\delta \|\varphi(t)\|^2
\]
for any $t\geq 0$. Then Lemma \ref{gin} allows us to deduce that
\begin{equation}\label{mono}
\|\varphi(t,x)\| \leq \|x\| e^{-\delta t}\quad \text{ for any } x \in \bb R^d \text{ and any } t \geq 0.
\end{equation}
In other words, $\varphi(t,x)$ converges to $0$ exponentially fast as $t\rightarrow +\infty$. 
Notice that the eigenvalues of the Jacobian matrix of $F$ at zero, $DF(0)$, might be complex numbers. 
Recall that for any $\lambda\in \mathbb{C}$ and 
$v\in \mathbb{C}^d$, 
\begin{align*}
\lambda v&=(\Re(\lambda)+i\Im(\lambda))(\Re(v)+i\Im(v))\\
&=\Re(\lambda)\Re(v)-\Im(\lambda)\Im(v)+
i\left(\Im(\lambda)\Re(v)+\Re(\lambda)\Im(v)\right).
\end{align*}
From \eqref{C1}
we have $\mathrm{Re}(\lambda) \geq \delta$ for any eigenvalue $\lambda$ of $DF(0)$.
Let $v\in\mathbb{C}^d$ an eigenvector associated to the eigenvalue $\lambda$ of $DF(0)$. Then
\begin{equation}
-(\mathrm{Re}(\lambda)-\delta)\|\mathrm{Im}(v)\|^2\leq \mathrm{Im}(\lambda)\<\mathrm{Re}(v),\mathrm{Im}(v)\>\leq (\mathrm{Re}(\lambda)-\delta)\|\mathrm{Re}(v)\|^2.
\label{control}
\end{equation}
Particularly, from \eqref{control} we have that  \eqref{C1} does not allow to control the imaginary part of the eigenvalues of $DF(0)$. Typically and roughly speaking, the dynamical system associated to \eqref{EDO} is an ``uniformly contracting spiral".

The following Lemma provides us the asymptotics of $\varphi(t)$ as $t$ goes to $+\infty$. It will be important for determining the cut-off time and time window.
\begin{lemma}
\label{asymp}
Assume that \eqref{C1} holds. Then
for any $x_0\in \mathbb{R}^{d}\setminus\{0\}$  there exist $\lambda:=\lambda(x_0)>0$, $\ell:=\ell(x_0)$, $m:=m(x_0) \in \bb \{1,\ldots,d\}$, $\theta_1:=\theta_1(x_0),\dots,\theta_m:=\theta_m(x_0) \in [0,2\pi)$, $v_1:=v_1(x_0),\dots,v_m:=v_m(x_0)$ in $\bb C^d$ linearly independent and $\tau:=\tau(x_0)>0$ such that
\[
\lim_{t \to +\infty} \Big\| \frac{e^{\lambda t}}{t^{\ell-1}} \varphi(t+\tau,x_0) - \sum\limits_{k=1}^m e^{i\theta_k t} v_k \Big\|=0.
\]
\end{lemma}

This lemma will be proved in Appendix \ref{B.tomate}, where we give  more detailed description of the constants and vectors appearing in this lemma. We can anticipate that the numbers $\lambda\pm i\theta_k$, $k=1,\ldots,m$ are eigenvalues of $DF(0)$ and that the vectors $v_k\in \mathbb{C}^{d}$, $k=1,\ldots,m$ are elements of the Jordan decomposition of the matrix $DF(0)$.

\subsection{The cut-off phenomenon}\label{section2.2.}
Let $\mu$, $\nu$ be two probability measures in $(\bb R^d,\mc{B}(\bb R^d))$. We say that a probability measure $\pi$ in
$(\bb R^d \times \bb R^d,\mc{B}(\bb R^d\times \bb R^d))$ is a {\em coupling} between $\mu$ and $\nu$ if for any Borel set $B\in \mc{B}{(\bb R^d)}$,
\[
\pi(B \times \bb R^d) = \mu(B) \text{ and } \pi(\bb R^d \times B) = \nu(B).
\]
In that case we say  that $\pi \in \mc C(\mu,\nu)$. The {\em total variation} distance between $\mu$ and $\nu$ is defined as
\[
\ud_{\mathrm{TV}}(\mu,\nu) = \inf_{\pi \in \mc C(\mu,\nu)} \pi \big\{(x,y) \in \bb R^d \times \bb R^d: x \neq y\big\}.
\]
Notice that the diameter with respect to $\ud_{\mathrm{TV}}(\cdot,\cdot)$ of the set $\mc M_1^+(\bb R^d,\mc{B}(\bb R^d))$ of probability measures defined in $(\bb R^d,\mc{B}(\bb R^d))$ is equal to $1$.
If $X$ and $Y$ are two random variables in $\mathbb{R}^d$ which are defined in the same measurable space $(\Omega,\mathcal{F})$, we write $\ud_{\mathrm{TV}}(X,Y)$ instead of $\ud_{\mathrm{TV}}(\mathbb{P}(X\in \cdot),\mathbb{P}(Y\in \cdot))$.

For simplicity, we also write 
$\ud_{\mathrm{TV}}(X,\mu_{Y})$ in place of
$\ud_{\mathrm{TV}}(X,Y)$, where $\mu_{Y}$ is the distribution of the random variable $Y$.
For an account of the equivalent formulations of the total variation distance (normalised or not
normalised), we recommend the book of A.~Kulik (\cite{KUL}, Chapter $2$).

For any $\epsilon\in (0,1]$, let $x^{\epsilon}$ be the continuous time stochastic process $\{x^{\epsilon}(t):t\geq 0\}$.
We say that a family of stochastic processes $\{x^{\epsilon}\}_{\epsilon \in (0,1]}$ has {\em thermalisation} at position $\{t^\epsilon\}_{\epsilon \in (0,1]}$, window $\{\omega^\epsilon\}_{\epsilon \in (0,1]}$ and state $\{\mu^\epsilon\}_{\epsilon \in (0,1]}$ if
\begin{itemize}
\item[i)]
\[
\lim_{\epsilon \to 0}{t^{\epsilon}}=+\infty
\quad\textrm{ and }\quad \lim_{\epsilon \to 0} \frac{\omega^\epsilon}{t^\epsilon} =0,
\]

\item[ii)]
\[
\varlim_{c \to +\infty} \limsup_{\epsilon \to 0} \ud_{\mathrm{TV}} (x^\epsilon({t^\epsilon+c\omega^\epsilon}), \mu^\epsilon)=0,
\]
\item[iii)]

\[
\lim_{c \to -\infty} \liminf_{\epsilon \to 0} \ud_{\mathrm{TV}} (x^\epsilon({t^\epsilon +c\omega^\epsilon}), \mu^\epsilon)=1.
\]
\end{itemize}

If for any $\epsilon\in (0,1]$, $x^\epsilon$ is a Markov process with a unique invariant measure and $\mu^\epsilon$ is the invariant measure of the process $x^{\epsilon}$ we say that the family $\{x^{\epsilon}\}_{\epsilon \in (0,1]}$ presents {\em thermalisation}  or {\em window cut-off}.

If in addition to i) there is a continuous function $G: \bb R \to [0,1]$ such that $G(-\infty)=1$, $G(+\infty)=0$ and
\[
\text{ii')}\;\; \lim_{\epsilon \to 0} \ud_{\mathrm{TV}} ( x^\epsilon({t^\epsilon+c\omega^\epsilon}), \mu^\epsilon)=:G(c)
\quad \textrm{ for any } c\in \mathbb{R},
\]
we say that there is {\em profile  thermalisation} or {\em profile cut-off}. Notice that $\text{ii')}$ implies $\text{ii)}$ and $\text{iii)}$, and therefore profile thermalisation (respectively profile cut-off) is a stronger notion than thermalisation (respectively window cut-off).

\subsection{The overdamped Langevin dynamics}\label{tsp}
Let $\{B(t): t \geq 0\}$ be a standard Brownian motion in $\bb R^d$ and let $\epsilon \in (0,1]$ be a scaling parameter. Let $x_0 \in U_0 \setminus \{0\}$ and let $\{x^\epsilon(t,x_0): t \geq 0\}$ be the solution of the following stochastic differential equation:
\begin{equation}
\label{SEDO}
\left\{
\begin{array}{r@{\;=\;}l}
\ud x^\epsilon(t) & - F(x^\epsilon(t)) \ud t + \sqrt \epsilon \ud B(t)
\quad \textrm{  for } t\geq 0,\\
x^\epsilon(0) & x_0.
\end{array}
\right.
\end{equation}

Stochastic differential equation \eqref{SEDO} is used in molecular modelling. In that context $\epsilon=2\kappa \tau$,
where $\tau$ is the temperature of the system and $\kappa$ is the Boltzmann constant. 
In statistical physics, equation \eqref{SEDO} has a computational interest to modelling a sample of a Gibbs measure in high-dimensional Euclidean spaces. 
Denote by $(\Omega,\mc F,\bb P)$ the probability space where $\{B(t): t \geq 0\}$ is defined and denote by $\bb E$ the expectation with respect to $\bb P$. Notice that \eqref{SEDO} has a unique strong solution (see Remark $2.1.2$ page $57$ of \cite{SI} or Theorem $10.2.2$ of \cite{SV}), and therefore $\{x^\epsilon(t,x_0): t \geq 0\}$ can be taken as a stochastic process in the same probability space
$(\Omega,\mc F,\bb P)$. 

In order to avoid unnecessary notation, we write $\{x^\epsilon(t): t \geq 0\}$ instead of $\{x^\epsilon(t,x_0): t \geq 0\}$ and $\{\varphi(t): t \geq 0\}$ instead of $\{\varphi(t,x_0): t \geq 0\}$.
Since  $\epsilon \in (0,1]$, for simplicity, we write $\lim\limits_{\epsilon \to 0}$ instead of 
$\lim\limits_{\epsilon \to 0^+}$. 

Our aim is to describe in detail the asymptotic behaviour of the law of $x^\epsilon(t)$ for large times $t$, as $\epsilon \to 0$. In particular, we are interested in the law of $x^\epsilon(t)$ for times $t$ of order $\mc O(\log (1/\epsilon))$, where {\em thermalisation} or {\em window cut-off} phenomenon appears.

Under \eqref{C1}, for any $\epsilon\in (0,1]$,  the process $\{x^\epsilon(t): t \geq 0\}$ is uniquely ergodic with stationary measure $\mu^\epsilon$, see Lemma \ref{ec4} for details. 
 Moreover, the process is strongly Feller.
In particular, the process visits infinitely often every non-empty open set of the state space $\mathbb{R}^d$.
The stationary measure $\mu^{\epsilon}$  is absolutely continuous with respect to the Lebesgue measure in $\bb R^d$. The density $\rho^{\epsilon}$ of $\mu^{\epsilon}$ is smooth and solves the stationary Fokker-Planck equation:
\begin{equation*}
\frac{\epsilon}{2}\sum\limits_{j=1}^{d}\frac{\partial^2 }{\partial x_j^2}(\rho^{\epsilon}(x))+\sum\limits_{j=1}^{d} \frac{\partial }{\partial x_j}(F_j(x)\rho^{\epsilon}(x))=0\quad \textrm{  for any } x\in \mathbb{R}^d,
\end{equation*}
where $F=(F_1,\ldots,F_d)^T$, for details see \cite{SI} (pages $60$-$63$). 
When the process is reversible, {\em{i.e.}}, $F(x)=\nabla V(x)$, $x
\in \mathbb{R}^d$, for some scalar function $V$ (also called potential), the stationary measure $\mu^\epsilon$ is of the Gibbs type:
\begin{equation}
\label{invatt}
\mu^{\epsilon}(\ud x)=\frac{1}{\mathcal{Z}^\epsilon}e^{-\frac{2V(x)}{\epsilon}}\ud x,\quad
\textrm{ where }\quad \mathcal{Z}^\epsilon=\int\limits_{\mathbb{R}^d}e^{-\frac{2V(x)}{\epsilon}}\ud x<+\infty.
\end{equation}
The normalised constant $\mathcal{Z}^\epsilon$ is called the partition function. 
If the vector field $F$ can be decomposed as 
\[F(x)=\nabla V(x)+b(x)\quad \textrm{ for any } x\in \mathbb{R}^d,\]  where $V:\mathbb{R}^d\rightarrow \mathbb{R}$ is a scalar function and $b:\mathbb{R}^d\rightarrow \mathbb{R}^d$ is a vector field which satisfies the
divergence-free condition:
\[\textrm{div}\left(e^{-\frac{2}{\epsilon}V(x)}b(x)\right)=0\quad  \textrm{ for any }  x\in \mathbb{R}^d,\]
then under some appropriate assumptions on $V$ at infinity, {\em{i.e.}}, 
\[
\frac{1}{2}\|\nabla V(x)\|^2-\Delta V(x)\rightarrow +\infty \quad \textrm{ as } \|x\|\rightarrow +\infty,
\]  
the probability measure $\mu^{\epsilon}$ given by \eqref{invatt} remains stationary for \eqref{SEDO}. For details see  \cite{HHS}, \cite{VI1}, \cite{JA} and \cite{LNP}. In this situation, using the Laplace Method, asymptotics as $\epsilon\to 0$ for $\mu^{\epsilon}$ can be obtained, see \cite{HWANG} and \cite{AHWANG} for further details.
In general, the equilibrium measure can be expressed as an integral of a Green function, but aside from a few simple cases, there are no closed expressions for it.
In this case, the Freidlin-Wentzell theory implies that the non-Gibbs measure $\mu^{\epsilon}$ is equivalent to a Gibbs measure with a ``quasi-potential" $\tilde{V}$ playing the role of the potential energy, see for instance \cite{MS4}, \cite{MS1}, \cite{MS3} and \cite{MS5}. However, the study of the regularity of the quasi-potential is a non-trivial mathematical issue, for details see \cite{BK}. For our purposes, no transverse condition on the vector field $F$ is assumed and also we do not need that the Gibbs measure remains stationary  for \eqref{SEDO}, for further details  see \cite{BR} and the references therein.

In many theoretical or applied problems involving ergodic processes, it
is important to estimate the time until the distribution of the process is close to its equilibrium distribution.
Under some {\it{strong coercivity condition}} and {\it{growth condition}} that we will state precisely in Section \ref{resultados},
we will prove that the law of $x^\epsilon(t)$ converges in total variation distance to $\mu^\epsilon$ in a {\em time window}
\begin{equation}
\label{win}
w_{}^\epsilon:= \frac{1}{\lambda}+o(1)
\end{equation}
of order $\mc O(1)$ around the {\em{cut-off time}} 
\begin{equation}
\label{mix}
t_{\mathrm{mix}}^\epsilon:= \frac{1}{2\lambda} \ln \left(1/\epsilon\right) +
\frac{\ell-1}{\lambda} \ln \left(\ln \left(1/\epsilon\right)\right)+\tau,
\end{equation}
where $\lambda$, $\ell$ and $\tau$ are the positive constants associated to $x_0$ in Lemma \ref{asymp}.

If we only assume that $0$ is a hyperbolic attractor of \eqref{EDO}, we can not rule out the existence of other attractors. These attractors are accessible to the stochastic dynamics \eqref{SEDO} (a large part of the celebrated book of M. Freidlin \& A.~Wentzell \cite{FW} is devoted to the study of this problem).
However, in this situation the other attractors are not accessible until times of order $\mc O(e^{\nicefrac{c}{\epsilon}})$, where $c$ is a positive constant, and we prove that the law of $x^\epsilon(t)$ converges to a Gaussian random variable on a time window of order $\mc O(1)$ around $t_{\mathrm{mix}}^\epsilon$.

The exact way on which this convergence takes place is the content of the following section.

\subsection{Results}\label{resultados}
Denote by  $\mathcal{G}{\left(v,\Xi \right)}$ the Gaussian distribution in $\mathbb{R}^d$ with vector mean $v$ and positive definite covariance matrix $\Xi $.
Let $\mathrm{I}_d$ be the identity $d\times d$-matrix. Given a matrix $A$, denote by $A^*$ the transpose matrix of $A$.
Recall that for any $y\in \mathbb{R}^d$, $DF(y)$ denotes the Jacobian matrix of $F$ at $y$.

A sufficient condition that allows to uniformly push back to the origin the dynamics of \eqref{SEDO} is the following
{\em strong coercivity condition}:
there exists $\delta >0$ such that
\begin{equation}
\tag{H}
\label{C2}
\<x,DF(y)x\> \geq \delta \|x\|^2\quad \textrm{ for any } x,y \in \mathbb{R}^d.
\end{equation}
At the beginning of Section \ref{mulana} we will see that \eqref{C2} implies \eqref{C1}.
To control the growth  of the vector field $F$ around infinity, we 
assume the following {\em growth condition}:
there exist positive constants $c_0$ and $c_1$ such that 
\begin{equation}
\tag{G}
\label{C3}
\|F(x)\| \leq c_0e^{c_1\|x\|^2}\quad \textrm{ for any } x \in \mathbb{R}^d.
\end{equation}
In the case of a stochastic perturbation of a dynamical system
satisfying the strongly coercivity condition \eqref{C2} and  the growth condition \eqref{C3}
we prove thermalisation.
\begin{theorem}\label{main}
\label{t1}
Assume that 
 \eqref{C2} and   \eqref{C3} hold.
Let $\{x^\epsilon(t,x_0): t \geq 0\}$ be the solution of \eqref{SEDO}
and denote by $\mu^\epsilon$ the unique invariant probability measure for the evolution given by \eqref{SEDO}.
Denote by 
\[
d^{\epsilon}(t)=\ud_{\mathrm{TV}}(x^{\epsilon}(t,x_0),\mu^\epsilon) \quad
\textrm{for any } t\geq 0
\] 
the total variation distance between the law of the random variable $x^{\epsilon}(t,x_0)$ and its invariant probability $\mu^\epsilon$.
Consider the cut-off time $t_{\mathrm{mix}}^\epsilon$  given by \eqref{mix} and the time window given by
\eqref{win}. Let $x_0\not=0$. Then for
any $c\in \mathbb{R}$ we have
\[
\lim\limits_{\epsilon\rightarrow 0}{\Big|d^{\epsilon}(t_{\mathrm{mix}}^\epsilon+cw^\epsilon)-D^{\epsilon}(t_{\mathrm{mix}}^\epsilon+cw^\epsilon)\Big|}=0,
\]
where
\begin{equation}\label{gap}
D^{\epsilon}(t)=\ud_{\mathrm{TV}}\left(\mathcal{G}\left(\frac{(t-\tau)^{\ell-1}}{e^{\lambda (t-\tau)}\sqrt{\epsilon}}\Sigma^{-\nicefrac{1}{2}}\sum\limits_{k=1}^{m}e^{i\theta_k (t-\tau)}v_k,\mathrm{I}_d \right),\mathcal{G}(0,\mathrm{I}_d) \right)
\end{equation}
for any $t\geq \tau$
with $m$, $\lambda$, $\ell$, $\tau$, $\theta_1,\ldots, \theta_m $, $v_1,\ldots,v_m$ are the constants and vectors  associated to $x_0$
in {\em{Lemma \ref{asymp}}},
and the matrix $\Sigma$ is the unique solution of the matrix Lyapunov equation:
\begin{equation}
\label{lyapv}
DF(0)X+X (DF(0))^*=\mathrm{I}_d.
\end{equation}
\end{theorem}
\begin{remark}
The last theorem tells us that the total variation distance between the law of $x^{\epsilon}(t)$  and its equilibrium $\mu^{\epsilon}$ can be well approximated in a time window \eqref{win} around the cut-off time \eqref{mix} by the total variation distance between two Gaussian distributions \eqref{gap}.
\end{remark}
\begin{remark}\label{neg}
From Lemma \ref{a2} we deduce 
an ``explicit" formula for the distance \eqref{gap}, i.e.,
\[
D^{\epsilon}(t)=
\sqrt{\frac{2}{{\pi}}}\int\limits_{0}^{\nicefrac{\|m^\epsilon(t)\|}{2}}e^{-\frac{x^2}{2}}\ud x,
\]
where $m^{\epsilon}(t)=\frac{(t-\tau)^{\ell-1}}{e^{\lambda (t-\tau)}\sqrt{\epsilon}}\Sigma^{-\nicefrac{1}{2}}\sum\limits_{k=1}^{m}e^{i\theta_k (t-\tau)}v_k$ for any $t\geq \tau$.
\end{remark}

\begin{remark}\label{refad}
Since the linear differential equation 
\[\ud x(t)=-DF(0)x(t)\ud t \quad \textrm{ for any } t\geq 0 \] is asymptotically stable, then the matrix Lyapunov equation \eqref{lyapv} has a unique solution $\Sigma$ which is symmetric and positive definite and it is given by the formula:
\[
\Sigma=\int\limits_{0}^{\infty}e^{-DF(0)s}
e^{-(DF(0))^*s}
\ud s.
\]
For more details, see Theorem $1$, page $443$ of \cite{LATI}.
If in addition, $DF(0)$ is symmetric then $\Sigma$ is easily computable and it is given by $\Sigma=\frac{1}{2}(DF(0))^{-1}$.
\end{remark}

From Theorem \ref{main} we have the following consequences that we write as  corollaries.
To made the presentation more fluent,
in all the corollaries below, we will assume the same hypothesis of Theorem \ref{main} and keep the same notation. 
For any $\epsilon\in (0,1]$ and $x_0\in \mathbb{R}^d$, denote by $x^{\epsilon,x_0}$  the Markov process $\{x^{\epsilon}(t,x_0):t\geq 0\}$. 

\begin{corollary}\label{nuevo}
Suppose that $x_0\not=0$.
Window thermalisation for the distance $D^{\epsilon}$ at cut-off time  $t^\epsilon_{\mathrm{mix}}$ and time window $w^{\epsilon}$ is equivalent to window thermalisation for the distance $d^{\epsilon}$
at cut-off time $t^\epsilon_{\mathrm{mix}}$ and time window $w^{\epsilon}$. The same holds true for profile thermalisation.
\end{corollary}
\begin{proof}
It follows easily from Theorem \ref{main} and the following inequalities
\begin{equation*}
D^{\epsilon}(t^\epsilon_{\mathrm{mix}}+cw^{\epsilon})\leq |D^{\epsilon}(t^\epsilon_{\mathrm{mix}}+cw^{\epsilon})-d^{\epsilon}(t^\epsilon_{\mathrm{mix}}+cw^{\epsilon})|+d^{\epsilon}(t^\epsilon_{\mathrm{mix}}+cw^{\epsilon})
\end{equation*}
and
\begin{equation*}
d^{\epsilon}(t^\epsilon_{\mathrm{mix}}+cw^{\epsilon})\leq |D^{\epsilon}(t^\epsilon_{\mathrm{mix}}+cw^{\epsilon})-d^{\epsilon}(t^\epsilon_{\mathrm{mix}}+cw^{\epsilon})|+D^{\epsilon}(t^\epsilon_{\mathrm{mix}}+cw^{\epsilon}).
\end{equation*}
\end{proof}

\begin{corollary}[Thermalisation]
Suppose that $x_0\not=0$.
Theorem \ref{main} implies thermalisation  
for the 
 family of stochastic Markov processes 
 $\{x^{\epsilon,x_0}\}_{\epsilon\in (0,1]}$.
\end{corollary}

\begin{proof}
From Corollary \ref{nuevo}, we only need to analyse  the distance $D^{\epsilon}$.
Notice
\[0<L:=\liminf\limits_{t\rightarrow +\infty}\left\|\sum\limits_{k=1}^{m}e^{i\theta_k (t-\tau)}v_k\right\|\leq \limsup\limits_{t\rightarrow +\infty}\left\|\sum\limits_{k=1}^{m}e^{i\theta_k (t-\tau)}v_k\right\|\leq \sum\limits_{k=1}^{m}\|v_k\|=:U,\]
where first inequality follows from the Cantor diagonal argument and the fact that $v_1,\ldots,v_m$ are linearly independent. 
From Remark \ref{neg} we have
\[
D^{\epsilon}(t)=
\sqrt{\frac{2}{{\pi}}}\int\limits_{0}^{\nicefrac{\|m^{\epsilon}(t)\|}{2}}e^{-\frac{x^2}{2}}\ud x,
\]
where $m^{\epsilon}(t)=\frac{(t-\tau)^{\ell-1}}{e^{\lambda (t-\tau)}\sqrt{\epsilon}}\Sigma^{-\nicefrac{1}{2}}\sum\limits_{k=1}^{m}e^{i\theta_k (t-\tau)}v_k$ for any $t\geq \tau$.
Notice that
\[
Le^{-c}\leq \liminf\limits_{\epsilon \to 0}
\|m^{\epsilon}(t_{\mathrm{mix}}^\epsilon+cw^\epsilon)\|\leq 
\limsup\limits_{\epsilon \to 0}
\|m^{\epsilon}(t_{\mathrm{mix}}^\epsilon+cw^\epsilon)\|\leq Ue^{-c}
\]
for any $c\in \mathbb{R}$.
From Lemma \ref{lusc} and Lemma \ref{a2} we deduce
\[
\begin{split}
\sqrt{\frac{2}{{\pi}}}\int\limits_{0}^{\nicefrac{Le^{-c}}{2}}e^{-\frac{x^2}{2}}\ud x\leq & \liminf\limits_{\epsilon \to 0}
D^{\epsilon}(t_{\mathrm{mix}}^\epsilon+cw^\epsilon)\\
\leq &
\limsup\limits_{\epsilon \to 0}
D^{\epsilon}(t_{\mathrm{mix}}^\epsilon+cw^\epsilon)\leq \sqrt{\frac{2}{{\pi}}}\int\limits_{0}^{\nicefrac{Ue^{-c}}{2}}e^{-\frac{x^2}{2}}\ud x
\end{split}
\]
for any $c\in \mathbb{R}$.
Therefore
\[
\lim_{c\to -\infty}\liminf\limits_{\epsilon \to 0}
D^{\epsilon}(t_{\mathrm{mix}}^\epsilon+cw^\epsilon)=1 
\]
and
\[
\lim_{c\to +\infty}\limsup\limits_{\epsilon \to 0}
D^{\epsilon}(t_{\mathrm{mix}}^\epsilon+cw^\epsilon)=0.
\]
\end{proof}

\begin{remark}\label{pos}
Recall that $\{v_1,\ldots,v_m\}$ are linearly independent in $\mathbb{C}$. 
If in addition
\[\lim\limits_{t\rightarrow +\infty}\left\|\Sigma^{-\nicefrac{1}{2}}\sum_{k=1}^{m}e^{i\theta_k t}v_k\right\|\quad \textrm{ is well defined, }\]
then  
\[\lim\limits_{t\rightarrow +\infty}\left\|\Sigma^{-\nicefrac{1}{2}}\sum\limits_{k=1}^{m}e^{i\theta_k t}v_k\right\|=
\left\|\Sigma^{-\nicefrac{1}{2}}\sum\limits_{k=1}^{m}v_k\right\|>0.
\]
In this case, we define 
\[
r(x_0):=\left\|\Sigma^{-\nicefrac{1}{2}}\sum\limits_{k=1}^{m}v_k\right\|>0.
\]
\end{remark}

\begin{corollary}[Profile thermalisation]\label{pt}
Suppose that $x_0\not=0$.
There is profile thermalisation  for the 
 family of stochastic Marvok processes 
 $\{x^{\epsilon,x_0}\}_{\epsilon\in (0,1]}$ if and only if 
\[\lim\limits_{t\rightarrow +\infty}\left\|\Sigma^{-\nicefrac{1}{2}}\sum\limits_{k=1}^{m}e^{i\theta_k t}v_k\right\|\quad \textrm{ is well defined}.\]
\end{corollary}

\begin{proof}
Suppose that there is profile thermalisation for
 $\{x^{\epsilon,x_0}\}_{\epsilon\in (0,1]}$.
 Then $\lim\limits_{\epsilon \to 0}d^{\epsilon}(t_{\mathrm{mix}}^\epsilon+cw^\epsilon)$  exists for any $c\in \mathbb{R}$. From Corollary \ref{nuevo} we have that
 $\lim\limits_{\epsilon \to 0}D^{\epsilon}(t_{\mathrm{mix}}^\epsilon+cw^\epsilon)$  also exists for any $c\in \mathbb{R}$.
From Remark \ref{neg} we deduce 
\[\lim\limits_{t\rightarrow +\infty}\left\|\Sigma^{-\nicefrac{1}{2}}\sum\limits_{k=1}^{m}e^{i\theta_k t}v_k\right\|\quad \textrm{ is well defined}.\]
On the other hand, if 
$\lim\limits_{t\rightarrow +\infty}\left\|\Sigma^{-\nicefrac{1}{2}}\sum\limits_{k=1}^{m}e^{i\theta_k t}v_k\right\|=r(x_0)$, from Remark \ref{pos} we have that $r(x_0)>0$ and from Remark \ref{neg} we get 
\begin{equation*}\label{lir}
\lim\limits_{\epsilon \to 0}D^{\epsilon}(t_{\mathrm{mix}}^\epsilon+cw^\epsilon)=
\sqrt{\frac{2}{\pi}}\int\limits_{0}^{\frac{e^{-c}r(x_0)}{2}}e^{-\frac{x^2}{2}}\ud x \quad \textrm{ for any } c\in \mathbb{R}.
\end{equation*}
The latter together with Corollary \ref{nuevo} imply profile thermalisation for $\{x^{\epsilon,x_0}\}_{\epsilon\in (0,1]}$.
\end{proof}

The following corollary includes the case when the dynamics is reversible, {\em{i.e.}}, $F=\nabla V$ for some scalar function $V:\mathbb{R}^d\rightarrow \mathbb{R}$. 
\begin{corollary}\label{reals}
Suppose that $x_0\not=0$. If all the eigenvalues of $DF(0)$ are real then the 
 family of stochastic processes 
 $\{x^{\epsilon,x_0}\}_{\epsilon\in (0,1]}$ has profile thermalisation.
\end{corollary}
\begin{proof}
The proof follows from Corollary \ref{pt} observing that $\theta_j=0$ for any $j=1,\ldots,m$ and the fact that 
 $\{v_1,\ldots,v_m\}$ are linearly independent in $\mathbb{C}$.
\end{proof}

Moreover, in \cite{BJ}, we study the case when $d=1$
which follows immediately from  Corollary \ref{reals}.

We also have a {\bf{dynamical characterisation}} of profile thermalisation.
Define ``a normalised" $\omega$-limit set of $x_0$ as follows:
\begin{align*}
\omega(x_0):=& \left\{y\in \mathbb{R}^d: \textrm{ there exists a sequence of positive numbers } \{t_n: n\in \mathbb{N}\}\right.\\
& \left. \textrm{ such that }  \lim\limits_{n\rightarrow +\infty}t_n=+\infty 
 \textrm{ and }
 \lim\limits_{n\rightarrow +\infty} \frac{e^{\lambda t_n}}{t_n^{\ell-1}}\Sigma^{-\nicefrac{1}{2}}e^{-DF(0)t_n}x_0=y   \right\}.
\end{align*}
From Lemma \ref{asy}, it is not hard to see that
$\Sigma^{-\nicefrac{1}{2}}\sum_{k=1}^{m}v_k\in \omega(x_0)$. 
When all the eigenvalues of $DF(0)$ are real, then again by  Lemma \ref{asy}, we get that $\omega(x_0)$ consists of a non-zero element which is given by $\Sigma^{-\nicefrac{1}{2}}\sum_{k=1}^{m}v_k$.
\begin{corollary}\label{dpt}
Suppose that $x_0\not=0$. The 
 family of stochastic Markov processes 
 $\{x^{\epsilon,x_0}\}_{\epsilon\in (0,1]}$ has profile thermalisation if and only if
$\omega(x_0)$ is contained in a $d$-sphere with radius
$r(x_0):=\left\|\Sigma^{-\nicefrac{1}{2}}\sum\limits_{k=1}^{m}v_k\right\|$, i.e.,
$\omega(x_0)\subset \mathbb{S}^{d-1}{(r(x_0))}$,
where $\mathbb{S}^{d-1}{(r(x_0))}:=\left\{x\in \mathbb{R}^d: \|x\|=r(x_0)\right\}$.
\end{corollary}
\begin{proof}
Suppose that $\{x^{\epsilon,x_0}\}_{\epsilon\in(0,1]}$ has profile thermalisation. By Corollary
\ref{pt} we have  
\[
\lim_{t \to +\infty} \Big\|\Sigma^{-\nicefrac{1}{2}} \sum_{k=1}^m e^{i\theta_k t} v_k \Big\|\quad  \textrm{ is well defined}.
\]
From Remark \ref{pos} we know
\[
\lim_{t \to +\infty} \Big\|\Sigma^{-\nicefrac{1}{2}} \sum_{k=1}^m e^{i\theta_k t} v_k \Big\|
=
r(x_0)>0.
\]
The latter together with Lemma \ref{asy} allows to deduce that
\[
\lim_{t \to +\infty} \Big\| \frac{e^{\lambda t}}{t^{\ell-1}}\Sigma^{-\nicefrac{1}{2}}e^{-DF(0)t}x_0 \Big\|=
r(x_0).
\]
Consequently, $\omega(x_0)\subset \mathbb{S}^{d-1}\left(r(x_0)\right)$.

On the other hand, suppose that $\omega(x_0)\subset \mathbb{S}^{d-1}(r(x_0))$. 
Then 
\[
\lim_{t \to +\infty} 
\Big\| \frac{e^{\lambda t}}{t^{\ell-1}} e^{-DF(0)t}x_0 \Big\|=r(x_0).
\]
From Lemma \ref{asy} we get
\[
\lim_{t \to +\infty} \Big\|\Sigma^{-\nicefrac{1}{2}} \sum_{k=1}^m e^{i\theta_k t} v_k \Big\|=r(x_0).
\]
The latter together with Corollary \ref{pt} allow us to deduce the statement.
\end{proof}

In dimension $2$ and $3$, we can state a {\bf{spectral characterisation}} of profile thermalisation. Remind that if all the eigenvalues of $DF(0)$ are real, we have profile thermalisation as Corollary \ref{reals} stated, so we avoid that case.
\begin{corollary}
\label{dim2}
Suppose that $x_0\not=0$ and $d=2$.
Let $\gamma$ be a complex eigenvalue of $DF(0)$ with non-zero imaginary part and let $u_1+iu_2$ be its eigenvector, where $u_1,u_2\in \mathbb{R}^2$.
Then the 
 family of Markov stochastic processes 
 $\{x^{\epsilon,x_0}\}_{\epsilon\in (0,1]}$ has profile thermalisation if and only if
$\<u_1,\Sigma^{-1}u_1\>=\<u_2,\Sigma^{-1}u_2\>$ and $\<u_1,\Sigma^{-1}u_2\>=0$.
\end{corollary}

\begin{proof}
Write $\gamma=\lambda+i\theta$, where $\lambda>0$ with 
$\theta \not=0$. To the eigenvalue $\gamma$ we associated an eigenvector 
$u_1+iu_2$, where  $u_1,u_2\in \mathbb{R}^2$.
An straightforward computation shows 
\[
e^{\lambda t}e^{-DF(0)t}x_0=
(c_1\cos(\theta t)-c_2\sin(\theta t))u_1+(c_1\sin(\theta t)+c_2\cos(\theta t))u_2 
\]
for any $t\geq 0$, where 
$c_1:=c_1(x_0)$ and $c_2:=c_2(x_0)$ are not both zero.
Notice that $c:=\sqrt{c^2_1+c^2_2}>0$ and let $\cos(\alpha)=\nicefrac{c_1}{c}$ and
$\sin(\alpha)=\nicefrac{c_2}{c}$. Then
\[
e^{\lambda t}e^{-DF(0)t}x_0=
c\cos(\theta t+\alpha)u_1+c\sin(\theta t+\alpha)u_2 \quad \textrm{ for any } t\geq 0.
\]
Therefore, 
\begin{equation}\label{lop}
\begin{split}
\|\Sigma^{-\nicefrac{1}{2}}e^{\lambda t}e^{-DF(0)t}x_0\|^2 & =
c^2\cos^2(\theta t+\alpha)\<u_1,\Sigma^{-1}u_1\>\\
&\hspace{-3cm}+
c^2\sin^2(\theta t+\alpha)\<u_2,\Sigma^{-1}u_2\>+
2c^2\cos(\theta t+\alpha)\sin(\theta t+\alpha)\<u_1,\Sigma^{-1}u_2\>
\end{split}
\end{equation}
for any $t\geq 0$.
If $\<u_1,\Sigma^{-1}u_1\>=\<u_2,\Sigma^{-1}u_2\>$ and 
$\<u_1,\Sigma^{-1}u_2\>=0$ then 
\[
\|\Sigma^{-\nicefrac{1}{2}}e^{\lambda t}e^{-DF(0)t}x_0\|^2  =
c^2\<u_1,\Sigma^{-1}u_1\> \quad \textrm{ for any } t\geq 0.
\]
Notice that $\<u_1,\Sigma^{-1}u_1\>\not=0$ since $u_1\not=0$ and $\Sigma^{-1}$ is a positive definite symmetric matrix.
The conclusion follows easily from Lemma \ref{asy} and Corollary \ref{pt}. 

On the other hand,
if  $\{x^{\epsilon,x_0}\}_{\epsilon\in (0,1]}$ has profile thermalisation then Lemma \ref{asy} and Corollary \ref{pt} imply 
\[
\lim\limits_{t\to +\infty}\|\Sigma^{-\nicefrac{1}{2}}e^{\lambda t}e^{-DF(0)t}x_0\|^2  \quad \textrm{ is well defined.}
\]
Now, using \eqref{lop} and taking different subsequences we deduce 
$\<u_1,\Sigma^{-1}u_1\>=\<u_2,\Sigma^{-1}u_2\>$ and
$\<u_1,\Sigma^{-1}u_2\>=0$.
\end{proof}

Notice that in dimension $3$, at least one eigenvalue of $DF(0)$ is real. 
Therefore the interesting case is when the others eigenvalues are complex numbers with non-zero imaginary part.
Let $\gamma_1$ be a real eigenvalue of $DF(0)$ with eigenvector $v\in \mathbb{R}^3$.
Let $\gamma$ be a complex eigenvalue of $DF(0)$ with non-zero imaginary part and let $u_1+iu_2$ be its eigenvector, where $u_1,u_2\in \mathbb{R}^3$. In this case,
\[
\begin{split}
e^{-DF(0)t}x_0  = & c(x_0)e^{-\gamma_1 t}v\\
&+e^{-\lambda t}(c_1\cos(\theta t)-c_2\sin(\theta t))u_1+(c_1\sin(\theta t)+c_2\cos(\theta t))u_2) 
\end{split}
\]
for any $t\geq 0$, where 
$c_0:=c_0(x_0)$, $c_1:=c_1(x_0)$ and $c_2:=c_2(x_0)$ are not all zero. Notice that $c:=\sqrt{c^2_1+c^2_2}>0$ and let $\cos(\alpha)=\nicefrac{c_1}{c}$ and
$\sin(\alpha)=\nicefrac{c_2}{c}$. Then
\begin{equation}\label{bnm}
\begin{split}
e^{-DF(0)t}x_0  = & c_0e^{-\gamma_1 t}v
+e^{-\lambda t}(c\cos(\theta t+\alpha)u_1+c\sin(\theta t+\alpha)u_2) 
\end{split}
\end{equation}
for any $t\geq 0$.

\begin{corollary}
\label{dim3}
Suppose that $x_0\not=0$ and $d=3$.
Let $\gamma_1$ be a real eigenvalue of $DF(0)$ with eigenvector $v\in \mathbb{R}^3$.
Let $\gamma$ be a complex eigenvalue of $DF(0)$ with non-zero imaginary part and let $u_1+iu_2$ be its eigenvector, where $u_1,u_2\in \mathbb{R}^3$. Let $c_0$, $c$ and $\alpha$ the constants that appears in \eqref{bnm}. 
\begin{itemize}
\item[i)]  Assume $c_0=0$. 
 $\{x^{\epsilon,x_0}\}_{\epsilon\in (0,1]}$ has profile thermalisation if and only if
$\<u_1,\Sigma^{-1}u_1\>=\<u_2,\Sigma^{-1}u_2\>$ and $\<u_1,\Sigma^{-1}u_2\>=0$.
\item[ii)] Assume $c_0\not=0$ and $\gamma_1< \lambda$.
Then the family $\{x^{\epsilon,x_0}\}_{\epsilon\in (0,1]}$ has profile thermalisation.
\item[iii)] Assume $c_0\not=0$, $\gamma_1=\lambda$. 
The family $\{x^{\epsilon,x_0}\}_{\epsilon\in (0,1]}$ has profile thermalisation if and only if
$\<u_1,\Sigma^{-1}u_1\>=\<u_2,\Sigma^{-1}u_2\>$ and $\<u_1,\Sigma^{-1}u_2\>=\<v,\Sigma^{-1}u_1\>=\<v,\Sigma^{-1}u_2\>=0$.
\item[iv)] Assume $c_0\not=0$, $\gamma_1>\lambda$. 
The family $\{x^{\epsilon,x_0}\}_{\epsilon\in (0,1]}$ has profile thermalisation if and only if
$\<u_1,\Sigma^{-1}u_1\>=\<u_2,\Sigma^{-1}u_2\>$ and $\<u_1,\Sigma^{-1}u_2\>=0$.
\end{itemize} 
\end{corollary}

\begin{proof}~
\begin{itemize}
\item[i)] This case can be deduced using the same arguments as Corollary \ref{dim2}.
\item[ii)] Using relation \eqref{bnm} we obtain
\[
\lim\limits_{t\rightarrow +\infty} \Sigma^{-\nicefrac{1}{2}}e^{\gamma_1 t}e^{-DF(0)t}x_0=c_0\Sigma^{-\nicefrac{1}{2}}v\not=0.
\]
The latter together with Corollary \ref{pt} allows us to deduce profile thermalisation.
\item[iii)] Using relation \eqref{bnm} for any $t\geq 0$ we get
\begin{equation}\label{xcv}
\begin{split}
\|\Sigma^{-\nicefrac{1}{2}}e^{\gamma_1 t}e^{-DF(0)t}x_0\|^2=&c^2_0\<v,\Sigma^{-\nicefrac{1}{2}}v\>+c^2\cos^2(\theta t+\alpha)\<u_1,\Sigma^{-\nicefrac{1}{2}}u_1\>\\
&\hspace{-4cm}+c^2\sin^2(\theta t+\alpha)\<u_2,\Sigma^{-\nicefrac{1}{2}}u_2\>
+2c_0c\cos(\theta t+\alpha)\<v,\Sigma^{-\nicefrac{1}{2}}u_1\>\\
&\hspace{-4cm}+2c_0c\sin(\theta t+\alpha)\<v,\Sigma^{-\nicefrac{1}{2}}u_2\>
+2c^2\cos(\theta t+\alpha)\sin(\theta t+\alpha)\<u_1,\Sigma^{-\nicefrac{1}{2}}u_2\>.
\end{split}
\end{equation}
If
$\<u_1,\Sigma^{-1}u_1\>=\<u_2,\Sigma^{-1}u_2\>$ and 
\[\<u_1,\Sigma^{-1}u_2\>=\<v,\Sigma^{-1}u_1\>=\<v,\Sigma^{-1}u_2\>=0,\] then from \eqref{xcv} we deduce 
\[
\|\Sigma^{-\nicefrac{1}{2}}e^{\gamma_1 t}e^{-DF(0)t}x_0\|=
c^2_0\<v,\Sigma^{-\nicefrac{1}{2}}v\>+c^2\<u_1,\Sigma^{-1}u_1\>>0.
\]
The latter together with Lemma \ref{asy} and Corollary \ref{pt} imply profile thermalisation.

On the other hand,
if  $\{x^{\epsilon,x_0}\}_{\epsilon\in (0,1]}$ has profile thermalisation then Lemma \ref{asy} and Corollary \ref{pt} imply 
\[
\lim\limits_{t\to +\infty}\|\Sigma^{-\nicefrac{1}{2}}e^{\lambda t}e^{-DF(0)t}x_0\|^2  \quad \textrm{ is well defined.}
\]
Now, using  \eqref{xcv} and taking different subsequences we deduce
\[\<u_1,\Sigma^{-1}u_1\>=\<u_2,\Sigma^{-1}u_2\>\]
 and 
\[\<u_1,\Sigma^{-1}u_2\>=\<v,\Sigma^{-1}u_1\>=\<v,\Sigma^{-1}u_2\>=0.\]
\item[iv)] 
Using relation \eqref{bnm} for any $t\geq 0$ we have
\begin{equation}\label{xcv1}
\begin{split}
\|\Sigma^{-\nicefrac{1}{2}}e^{\lambda t}e^{-DF(0)t}x_0\|^2=&c^2_0e^{-2(\gamma_1-\lambda)t}\<v,\Sigma^{-\nicefrac{1}{2}}v\>+c^2\cos^2(\theta t+\alpha)\<u_1,\Sigma^{-\nicefrac{1}{2}}u_1\>\\
&\hspace{-3cm}+c^2\sin^2(\theta t+\alpha)\<u_2,\Sigma^{-\nicefrac{1}{2}}u_2\>
+2c_0ce^{-(\gamma_1-\lambda)t}\cos(\theta t+\alpha)\<v,\Sigma^{-\nicefrac{1}{2}}u_1\>\\
&\hspace{-3cm}+2c_0ce^{-(\gamma_1-\lambda)t}\sin(\theta t+\alpha)\<v,\Sigma^{-\nicefrac{1}{2}}u_2\>\\
&\hspace{-3cm}
+2c^2\cos(\theta t+\alpha)\sin(\theta t+\alpha)\<u_1,\Sigma^{-\nicefrac{1}{2}}u_2\>.
\end{split}
\end{equation}
If
$\<u_1,\Sigma^{-1}u_1\>=\<u_2,\Sigma^{-1}u_2\>$ and $\<u_1,\Sigma^{-1}u_2\>=0$  from \eqref{xcv1} we obtain
\[
\lim\limits_{t\to +\infty}
\|\Sigma^{-\nicefrac{1}{2}}e^{\lambda t}e^{-DF(0)t}x_0\|=
c^2\<u_1,\Sigma^{-1}u_1\>\not=0
\]
which together with Corollary \ref{pt} imply profile thermalisation.

On the other hand,
if  $\{x^{\epsilon,x_0}\}_{\epsilon\in (0,1]}$ has profile thermalisation then Lemma \ref{asy} and Corollary \ref{pt} imply 
\[
\lim\limits_{t\to +\infty}\|\Sigma^{-\nicefrac{1}{2}}e^{\lambda t}e^{-DF(0)t}x_0\|^2  \quad \textrm{ is well defined.}
\]
Now, using  \eqref{xcv1} and taking different subsequences one can deduce 
$\<u_1,\Sigma^{-1}u_1\>=\<u_2,\Sigma^{-1}u_2\>$ and 
$\<u_1,\Sigma^{-1}u_2\>=0.$
\end{itemize}
\end{proof}

When $\Sigma$ is the identity matrix, roughly speaking Corollary \ref{dim2} and Corollary \ref{dim3} state that profile thermalisation is equivalent to ``norm" preserving and orthogonality of the real and imaginary parts of the eigenvectors of $DF(0)$. When $\Sigma$ is not the identity, the latter still true under a change of basis. 

\section{The multiscale analysis}\label{mulana}
In this section, we prove that the process $\{x^\epsilon(t): t \geq 0\}$ can be well approximated by the solution of a 
linear non-homogeneous process in a time window that will include the time scale on which we are interested. 
It is not hard to see that
 \eqref{C2} basically says that  \eqref{C1} is satisfied around any point $y$. In fact, writing
\[
F(y)-F(x) = \int_0^1 \tfrac{\ud}{\ud t} F(x+t(y-x)) \ud t = \int_0^1 DF(x+t(y-x))(y-x)\ud t
\]
we obtain the seemingly stronger condition
\begin{equation*}
\label{C4}
\<y-x,F(y)-F(x)\> \geq \delta \|y-x\|^2\quad \text{ for any } x,y \in \bb R^d.
\end{equation*}
The latter is basically saying that \eqref{C1} is satisfied around any point $y \in \bb R^d$.
A good example of a vector field $F$ satisfying \eqref{C2} and \eqref{C3} is $F(x) = Ax + H(x)$, $x\in \mathbb{R}^d$,
where $A$ is a matrix,
$H$ is a vector valued function such that $F$ satisfies \eqref{C2} and
it satisfies $H(0)=0$, $DH(0)=0$, $\|DH\|_\infty <+\infty$ and $\|D^2H\|_\infty <+\infty$.
In dimension one, a good example to keep
 in mind is 
\begin{equation}\label{ejemplo}
F(x)=\sum\limits_{j=1}^{n}a_jx^{2j-1}\quad \textrm{ for any } x\in \mathbb{R},
\end{equation}
where $n\in \mathbb{N}$, $a_1>0$ and $a_j\geq 0$ for any $j\in \{2,\ldots,n\}$.
It is fairly easy to see that \eqref{ejemplo} satisfies \eqref{C2} and \eqref{C3}.
If $a_j>0$ for some $j\in \{2,\ldots,n\}$ then \eqref{ejemplo} is not globally Lipschitz continuous.

Recall that $F\in \mathcal{C}^2(\mathbb{R}^d,\mathbb{R}^d)$. Notice that
for any $x,y \in \mathbb{R}^d$ we have
\[
F(x)-F(y)=\int_0^1 DF(x+t(y-x))(y-x)\ud t.
\]
Therefore, for any $x,y \in \mathbb{R}^d$ we get
\[
F(x)-F(y)-DF(y)(x-y)=\int_0^1 (DF(y+t(x-y))-DF(y))(x-y)\ud t. 
\]
Note
\[
DF(y+t(x-y))-DF(y)=\int_0^1 D^2F(y+st(x-y))t(x-y)\ud s 
\]
for any 
$x,y \in \mathbb{R}^d$.
For any $r_0>0$ and $r_1>0$, define $C:=\sup\limits_{|z|\leq 2r_1+r_0}\|D^2F(z)\|$. Then
\begin{equation}\label{cota2}
\|F(x)-F(y)-DF(y)(x-y)\|\leq C\|x-y\|^2
\end{equation}
for any $\|x\|\leq r_0$ and $\|y\|\leq r_1$. Inequality \eqref{cota2} will allow us to control 
the random dynamics $\{x^{\epsilon}(t):t\geq 0\}$ on compacts sets and it will be very useful in our {\it{apriori}} estimates.

\subsection{Zeroth-order approximations}
It is fairly easy to see that for any $t\geq 0$, as $\epsilon \to 0$, $x^\epsilon(t)$ converges to $\varphi(t)$. The convergence can be proved to be almost surely uniform in compacts. But for our purposes, we need a {\em quantitative estimate} 
on the distance between $x^\epsilon(t)$ and 
$\varphi(t)$. The idea is fairly simple: \eqref{C2} says that the dynamical system \eqref{EDO} is uniformly contracting. Therefore, it is reasonable that fluctuations are pushed back to the solution of \eqref{EDO} and therefore the difference between $x^\epsilon(t)$ and $\varphi(t)$ has a short-time dependence on the noise $\{B(s):0\leq s\leq t\}$. This heuristics can be made precise computing the It\^o derivative of
$\|x^\epsilon(t) -\varphi(t)\|^2$ as follows:

\begin{align}
\ud \|x^\epsilon(t) -\varphi(t)\|^2
		&= -2 \<x^\epsilon(t) -\varphi(t), F(x^\epsilon(t)) -F(\varphi(t))\>\ud t
		\nonumber\\
 &\quad+2\sqrt \epsilon \<x^\epsilon(t) -\varphi(t), \ud B(t)\> +  d \epsilon \ud t \nonumber\\
		&\leq -2\delta \|x^\epsilon(t) -\varphi(t)\|^2\ud t +2\sqrt \epsilon \<x^\epsilon(t) -\varphi(t), \ud B(t)\>+ d \epsilon \ud t, \label{pin} 
\end{align}
where the last inequality follows from \eqref{C2}.
After a localisation argument we get
\begin{align*}
\frac{\ud}{\ud t} \mathbb{E}\left[\|x^\epsilon(t) -\varphi(t)\|^2\right]
\leq -2\delta \mathbb{E}\left[\|x^\epsilon(t) -\varphi(t)\|^2\right]+\epsilon d
\quad \textrm{ for any } t\geq 0.
\end{align*}
From Lemma \ref{gin}, we obtain the following uniform bound
\begin{equation}
\label{zeroth}
\bb E \left[\|x^\epsilon(t) -\varphi(t)\|^2 \right] \leq \frac{d \epsilon}{2\delta}(1-e^{-2\delta t})\leq \frac{d \epsilon}{2\delta} \quad\textrm{ for any } t\geq 0.
\end{equation}
We call this bound the {\em zeroth order} approximation of $x^\epsilon(t)$.
We have just proved that the distance between $x^\epsilon(t)$ and $\varphi(t)$ is of order $\mathcal{O}(\sqrt \epsilon)$, uniformly in $t\geq 0$. However, this estimate is meaningful only while $\|\varphi(t)\| \gg \sqrt \epsilon$. By Lemma \ref{asymp}, $\|\varphi(t)\|$ is of order $\mathcal{O}(t^{\ell-1} e^{-\lambda t})$, which means that \eqref{zeroth} is meaningful for times $t$ of order $o(t^\epsilon_{\mathrm{mix}})$, which fall just short of what we need. This is very natural, because at times of order $t_{\mathrm{mix}}^\epsilon$ we expect that fluctuations play a predominant role.

\subsection{First-order approximations}
Notice  that \eqref{zeroth} can be seen as a law of large numbers for $x^\epsilon(t)$. In fact, $\bb E[ x^\epsilon(t)] = \varphi(t)$ for every $t\geq 0$ and for $t \ll t_{\mathrm{mix}}^\epsilon$, $\epsilon /\|\varphi(t)\|^2 \to 0$. By the second-moment method, $x^\epsilon(t)$ satisfies a law of large numbers when properly renormalised. Therefore, it is natural to look at the corresponding central limit theorem. Define $\{y^\epsilon(t): t \geq 0\}$ as
\[
y^\epsilon(t) = \frac{x^\epsilon(t) -\varphi(t)}{\sqrt \epsilon} \quad\text{ for any } t \geq 0.
\]
As above, it is not very difficult to prove that for every $T>0$, the process $\{y^\epsilon(t): t \in [0,T]\}$ converges in distribution to the solution $\{y(t): t \in [0,T]\}$ of the linear non-homogeneous stochastic differential equation (also known as non-homogeneous Ornstein-Uhlenbeck process):
\begin{equation}\label{dc}
\left\{
\begin{array}{r@{\;=\;}l}
\ud y(t) & -DF(\varphi(t)) y(t)\ud t + \ud B(t) \quad \textrm{ for }t\geq 0,\\
y(0) & 0.
\end{array}
\right.
\end{equation}
Notice that this equation is linear and in particular $y(t)$ has a Gaussian law for any $t> 0$.
As in the previous section, our aim is to obtain good quantitative bounds for the distance between $y^\epsilon(t)$ and $y(t)$. First, we notice that the estimate \eqref{zeroth} can be rewritten as
\begin{equation}
\label{zoa}
\bb E\left[\|y^\epsilon(t)\|^2\right] \leq \frac{d}{2\delta} \quad \text{ for any } t \geq 0.
\end{equation}
We will also need an upper bound for $\bb E\left[\|y^\epsilon(t)\|^4\right]$. From the It\^o formula and \eqref{C2} we have
\[
\begin{split}
\ud \|y^\epsilon(t)\|^4
		&= -4\|y^\epsilon(t)\|^2\<y^\epsilon(t), DF(\varphi(t)) y^\epsilon(t)\> \ud t
			+ 4\|y^\epsilon(t)\|^2 \<y^\epsilon(t), \ud B(t)\>\\
&\quad + (2d+4)\|y^\epsilon(t)\|^2 \ud t\\
		&\leq -4\delta \|y^\epsilon(t)\|^4\ud t + 4\|y^\epsilon(t)\|^2 \<y^\epsilon(t), \ud B(t)\> + (2d+4) \|y^\epsilon(t)\|^2 \ud t.
\end{split}
\]
After a localisation argument we obtain
\[
\tfrac{\ud}{\ud t} \bb E \left[\|y^\epsilon(t) \|^4\right] \leq -4\delta \bb E\left[\|y^\epsilon(t)\|^4\right] +(2d+4) \bb E\left[\|y^\epsilon(t)\|^2\right].
\]
From \eqref{zoa} and Lemma \ref{gin} we get the uniformly bound
\begin{equation}
\label{zoa1}
\bb E\left[\|y^\epsilon(t)\|^4\right] \leq \frac{d(d+2)}{4\delta^2}\big(1-e^{-4\delta t}\big) \leq \frac{d(d+2)}{4\delta^2}
\quad \text{ for any } t \geq 0.
\end{equation}
Notice that $x^{\epsilon}(t)=\varphi(t)+\sqrt{\epsilon}y^{\epsilon}(t)$ for any $t\geq 0$ and the difference $y^\epsilon(t)-y(t)$ has bounded variation. Then
\begin{align*}
\tfrac{\ud}{\ud t}(y^\epsilon(t) -y(t))
		&= -\frac{1}{\sqrt \epsilon}\big(F(x^\epsilon(t))-F(\varphi(t)) -\sqrt \epsilon DF(\varphi(t))y(t)\big) \\
		&= -\frac{1}{\sqrt \epsilon}\big( F(\varphi(t)+\sqrt{\epsilon}y^{\epsilon}(t))-F(\varphi(t)+\sqrt{\epsilon}y(t))\big)\\
        &\quad -\frac{1}{\sqrt \epsilon}\big( F(\varphi(t)+\sqrt{\epsilon}y(t)) - F(\varphi(t)) - \sqrt{\epsilon} DF(\varphi(t))y(t)\big).
\end{align*}
Define $h^\epsilon(t):=F(\varphi(t)+\sqrt{\epsilon}y(t)) - F(\varphi(t)) - \sqrt{\epsilon} DF(\varphi(t))y(t)$ for any $t\geq 0$.
Therefore, using the chain rule for $\|y^\epsilon(t) -y(t)\|^2$ we obtain 
the differential equation:
\begin{align}\label{tio}
&\tfrac{\ud}{\ud t} \| y^\epsilon(t) -y(t)\|^2 = 
2\<y^\epsilon(t) -y(t),\tfrac{\ud}{\ud t}(y^\epsilon(t) -y(t))\>= \nonumber\\
&-\frac{2}{\sqrt \epsilon}\<y^\epsilon(t) -y(t),F(\varphi(t)+\sqrt{\epsilon}y^{\epsilon}(t))-F(\varphi(t)+\sqrt{\epsilon}y(t))\>\nonumber\\
&-\frac{2}{\sqrt \epsilon}\<y^\epsilon(t) -y(t),h^\epsilon(t)\>\leq -2\delta \|y^\epsilon(t) -y(t)\|^2-\frac{2}{\sqrt \epsilon}\<y^\epsilon(t) -y(t),h^\epsilon(t)\>,
\end{align}
where the last inequality follows from \eqref{C2}.
From the Cauchy-Schwarz inequality we observe
\begin{align}\label{cz12}
|(\nicefrac{2}{\sqrt \epsilon})\<y^\epsilon(t) -y(t),h^\epsilon(t)\>|
\leq (\nicefrac{2}{\sqrt \epsilon})\|y^\epsilon(t) -y(t)\| \|h^{\epsilon}(t)\|.
\end{align}
Recall the well known Young type inequality $2|ab|\leq \varrho a^2+(\nicefrac{1}{\varrho})b^2$ for any $a,b\in \mathbb{R}$ and $\rho>0$. 
From inequality \eqref{cz12} we have
\begin{align}\label{cz1}
|(\nicefrac{2}{\sqrt \epsilon})\<y^\epsilon(t) -y(t),h^\epsilon(t)\>|
\leq  \delta\|y^\epsilon(t) -y(t)\|^2+ \frac{1}{\epsilon\delta}\|h^{\epsilon}(t)\|^2.
\end{align}
From inequality \eqref{tio} and inequality \eqref{cz1} we deduce
\begin{align*}
\tfrac{\ud}{\ud t} \| y^\epsilon(t) -y(t)\|^2 \leq 
-\delta \| y^\epsilon(t) -y(t)\|^2+\frac{1}{\epsilon\delta}\|h^{\epsilon}(t)\|^2.
\end{align*}
By taking expectation in both sides of the last inequality, we obtain
\begin{align}\label{cop1}
\tfrac{\ud}{\ud t} \bb E \left[\| y^\epsilon(t) -y(t)\|^2\right] \leq 
-\delta \bb E \left[ \| y^\epsilon(t) -y(t)\|^2\right]+\frac{1}{\epsilon\delta}\bb E  \left[\|h^{\epsilon}(t)\|^2\right].
\end{align}
Define $H^{\epsilon}(t):=\bb E \left[h^{\epsilon}(t)\|^2\right]$ for any $t\geq 0$.
From inequality \eqref{cop1} and Lemma \ref{gin} we deduce
\[
\bb E \left[\| y^\epsilon(t) -y(t)\|^2\right]\leq 
\frac{(1-e^{-\delta t})}{\epsilon \delta^2}\int_{0}^{t}H^{\epsilon}(s)\ud s
\quad \textrm{ for any } t\geq 0.
\]
Therefore, we need to get an upper bound for $\int_{0}^{t}H^{\epsilon}(s)\ud s
\textrm{ for any } t\geq 0$. From Lemma \ref{falta} we have
\[
\int\limits_{0}^{t}H^{\epsilon}(s)\ud s\leq C(\eta,\|x_0\|,d,\delta)\epsilon^2 t+\tilde{C}(\eta,\|x_0\|,d,\delta)\epsilon^{\nicefrac{3}{2}}t^{\nicefrac{7}{4}} 
\]
for any $\eta>0$ and $t\in \left[0,\frac{\eta^2}{2\epsilon d }\right)$,
where $C(\eta,\|x_0\|,d,\delta)$ and $\tilde{C}(\eta,\|x_0\|,d,\delta)$ are positive constants that only depend on $\eta$, $\|x_0\|$, $d$ and $\delta$.
The latter implies 
\begin{equation}\label{pario}
\begin{split}
\bb E \left[\| x^\epsilon(t) -(\varphi(t)+\sqrt{\epsilon}y(t))\|^2\right]\leq &\\
&\hspace{-2cm}\frac{1}{\delta^2}\left(C(\eta,\|x_0\|,d,\delta)\epsilon^2 t+\tilde{C}(\eta,\|x_0\|,d,\delta)\epsilon^{\nicefrac{3}{2}}t^{\nicefrac{7}{4}}\right)
\end{split}
\end{equation}
for any $t\in \left[0,\frac{\eta^2}{2\epsilon d }\right)$. We call this bound the {\em first-order} approximation of $x^\epsilon(t)$.
Roughly speaking, for $t=\mathcal{O}(\ln(\nicefrac{1}{\epsilon}))$ we have just proved that the distance between $x^\epsilon(t)$ and $\varphi(t)+\sqrt{\epsilon} y(t)$ is of order 
$\mathcal{O}\left(\epsilon^{\nicefrac{3}{4}-\wp}\right)$ for any $\wp\in (0,\nicefrac{3}{4})$ which will be enough for our purposes.
\begin{lemma}\label{falta}
Assume that 
 \eqref{C2} and  \eqref{C3} hold. Let $\epsilon\in \left(0,\frac{\delta}{32c_1}\right)$.
For any $\eta>0$ and $t\in \left[0,\frac{\eta^2}{2\epsilon d }\right)$ we have
\[
\int\limits_{0}^{t}H^{\epsilon}(s)\ud s\leq C(\eta,\|x_0\|,d,\delta)\epsilon^2 t+\tilde{C}(\eta,\|x_0\|,d,\delta)\epsilon^{\nicefrac{3}{2}}t^{\nicefrac{7}{4}}, 
\]
where $C(\eta,\|x_0\|,d,\delta)$ and $\tilde{C}(\eta,\|x_0\|,d,\delta)$ only depend on $\eta$, $\|x_0\|$, $d$ and $\delta$.
Moreover, for any $\wp\in (0,\nicefrac{6}{7})$ we have
\[
\lim\limits_{\epsilon \to 0}\sup\limits_{0\leq t\leq \mathcal{O}\left(\nicefrac{1}{\epsilon^{\wp}}\right)}\bb E \left[\| x^\epsilon(t) -(\varphi(t)+\sqrt{\epsilon}y(t))\|^2\right]=0.
\]
\end{lemma}
\begin{proof}
Recall that $H^{\epsilon}(t)=\bb E \left[\|h^{\epsilon}(t)\|^2\right]$, where 
\[h^\epsilon(t)=F(\varphi(t)+\sqrt{\epsilon}y(t)) - F(\varphi(t)) - \sqrt{\epsilon} DF(\varphi(t))y(t) \quad \textrm{ for any } t\geq 0.\]
Take any $\eta>0$ and $t>0$ and define the event
\[
A^{}_{\eta,\epsilon,t}=\left[\sup\limits_{0\leq s\leq t}\|y(s)\|\leq \frac{\eta}{\sqrt{\epsilon}}\right].
\]
By inequality \eqref{cota2} we have
\[
\bb E \left[\|h^{\epsilon}(t)\|^2 \mathds{1}_{A_{\eta,\epsilon,t}}\right]
\leq C^2_0(\eta, \|x_0\|)\epsilon^2 \bb E \left[\|y(t)\|^4\right]
\]
for any $t\geq 0$,
where the positive constant $C_0(\eta, \|x_0\|)$ depends on $\eta$ and $\|x_0\|$. By a similar argument using in inequality \eqref{zoa1} we deduce 
$\bb E \left[\|y(t)\|^4\right]\leq \frac{d(d+2)}{4\delta^2}$ for any $t\geq 0$.
Then 
\[
\bb E \left[\|h^{\epsilon}(t)\|^2 \mathds{1}_{A_{\eta,\epsilon,t}}\right]\leq C^2_0(\eta, \|x_0\|) \frac{d(d+2)}{4\delta^2}\epsilon^2 \quad \textrm{ for any } t\geq 0.\] 
On the other hand, recall the well-known inequality
$(x+y+z)^2\leq 4(x^2+y^2+z^2)$ for any $x,y,z \in \mathbb{R}$.
Then 
\[
\begin{split}
\bb E \left[\|h^{\epsilon}(t)\|^2 \mathds{1}_{A^c_{\eta,\epsilon,t}}\right]
\leq &
4\bb E\left[\|F(\varphi(t)+\sqrt{\epsilon}y(t))\|^2 \mathds{1}_{A^c_{\eta,\epsilon,t}} \right ]
+4\bb E\left[\|F(\varphi(t))\|^2\mathds{1}_{A^c_{\eta,\epsilon,t}}\right]
\\
&+ 4\epsilon \bb E\left[\|DF(\varphi(t))y(t)\|^2\mathds{1}_{A^c_{\eta,\epsilon,t}}\right]
\end{split}
\]
for any $t\geq 0$.
We will analyse the upper bound of the last inequality.
Since $\|\varphi(s)\|\leq \|x_0\|$ for any $s\geq 0$, then 
\[
\bb E\left[\|F(\varphi(t))\|^2\mathds{1}_{A^c_{\eta,\epsilon,t}}\right]
\leq C^2_1(\|x_0\|) \mathbb{P}\left(A^c_{\eta,\epsilon,t} \right),
\]
for any $t\geq 0$,
where $C_1(\|x_0\|)$ is a positive constant that only depends on $\|x_0\|$.
We also observe  that
\[
 \bb E\left[\|DF(\varphi(t))y(t)\|^2\mathds{1}_{A^c_{\eta,\epsilon,t}}\right]\leq C^2_2(\|x_0\|)\bb E\left[\|y(t)\|^2\mathds{1}_{A^c_{\eta,\epsilon,t}}\right]
\]
for any $t\geq 0$,
where $C_2(\|x_0\|)$ is a positive constant that only depends on $\|x_0\|$.
From the Cauchy--Schwarz inequality we get
\[
\begin{split}
\bb E\left[\|y(t)\|^2\mathds{1}_{A^c_{\eta,\epsilon,t}}\right]=&
\bb E\left[\left(\|y(t)\|^2\mathds{1}_{A^c_{\eta,\epsilon,t}}\right)\mathds{1}_{A^c_{\eta,\epsilon,t}}\right] \\
\leq &
\left(\bb E\left[\left(\|y(t)\|^4\mathds{1}_{A^c_{\eta,\epsilon,t}}\right)\right]\right)^{\nicefrac{1}{2}}
\left(\mathbb{P}\left(A^c_{\eta,\epsilon,t}\right)\right)^{\nicefrac{1}{2}} \\
\leq &
\left(\bb E\left[\left(\|y(t)\|^8\right)\right]\right)^{\nicefrac{1}{4}}
\left(\mathbb{P}\left(A^c_{\eta,\epsilon,t}\right)\right)^{\nicefrac{3}{4}}.
\end{split}
\]
for any $t\geq 0$.
Following similar computations as we did in \eqref{zoa1} or by item ii) of Proposition \ref{a16} we deduce 
\[\bb E\left[\left(\|y(t)\|^8\right)\right]\leq \frac{d(d+2)(d+4)(d+6)}{16\delta^4} \quad \textrm{ for any } t\geq 0.\]
Therefore
\[
\bb E\left[\|DF(\varphi(t))y(t)\|^2\mathds{1}_{A^c_{\eta,\epsilon,t}}\right]\leq C^2_3(\|x_0\|,\delta,d)\left(\mathbb{P}\left(A^c_{\eta,\epsilon,t}\right)\right)^{\nicefrac{3}{4}}
\]
for any $t\geq 0$,
where $C_3(\|x_0\|,\delta,d)$ is a positive constant that only depends on $\|x_0\|$, $\delta$ and $d$. 
Finally, we analise $\bb E\left[\|F(\varphi(t)+\sqrt{\epsilon}y(t))\|^2 \mathds{1}_{A^c_{\eta,\epsilon,t}} \right ]$.
From \eqref{C3} we have
\[
\begin{split}
\bb E  &\left[\|F(\varphi(t)+\sqrt{\epsilon}y(t))\|^2  \mathds{1}_{A^c_{\eta,\epsilon,t}} \right ]\leq 
c^2_0e^{4c_1\|x_0\|^2}\bb E\left[e^{4c_1\epsilon\|y(t)\|^2}\mathds{1}_{A^c_{\eta,\epsilon,t}}\right]
\end{split}
\] 
for any $t\geq 0$.
From the Cauchy-Schwarz inequality we deduce
\[
\begin{split}
\bb E \left[\left(e^{4c_1\epsilon\|y(t)\|^2}\mathds{1}_{A^c_{\eta,\epsilon,t}}\right)\mathds{1}_{A^c_{\eta,\epsilon,t}}\right]
\leq &
\left(\bb E\left[e^{8c_1\epsilon\|y(t)\|^2}\mathds{1}_{A^c_{\eta,\epsilon,t}}\right]\right)^{\nicefrac{1}{2}}
\left(\mathbb{P}\left(A^c_{\eta,\epsilon,t}\right)\right)^{\nicefrac{1}{2}}\\
 \leq & \left(\bb E\left[e^{16c_1\epsilon\|y(t)\|^2}\right]\right)^{\nicefrac{1}{4}}
\left(\mathbb{P}\left(A^c_{\eta,\epsilon,t}\right)\right)^{\nicefrac{3}{4}}
\end{split}
\]
for any $t\geq 0$.
From item iv) of Proposition \ref{a16}, for any 
$\epsilon\in \left(0,\frac{\delta}{32c_1}\right)$
 we have
\[
\bb E\left[e^{16c_1\epsilon\|y(t)\|^2}\right]\leq e^{16c_1d\epsilon t} \quad \textrm{ for any } t\geq 0.
\]
Therefore, 
\[
\begin{split}
\bb E\left[\|h^{\epsilon}(t)\|^2\right]
 \leq &  C^2_4(\eta,\|x_0\|,d,\delta)\epsilon^2+
4C^2_1(\|x_0\|)\mathbb{P}\left(A^{c}_{\eta,\epsilon,t}\right)\\
&\hspace{-2.0cm} +4\left(C^2_3(\|x_0\|,\delta,d)+e^{4c_1d\epsilon t}\right)\left(\mathbb{P}\left(A^{c}_{\eta,\epsilon,t}\right)\right)^{\nicefrac{3}{4}}\\
& \hspace{-2.5cm}\leq  C^2_4(\eta,\|x_0\|,d,\delta)\epsilon^2+
4\left(C^2_1(\|x_0\|)+C^2_3(\|x_0\|,\delta,d)+e^{4c_1d\epsilon t}\right)\left(\mathbb{P}\left(A^{c}_{\eta,\epsilon,t}\right)\right)^{\nicefrac{3}{4}},
\end{split}
\]
where $C^2_4(\eta,\|x_0\|,d,\delta)=C^2_0(\eta,\|x_0\|)\frac{d(d+2)}{4\delta^2}$. 
From item ii) of Lemma \ref{uap} we have
\[
\mathbb{P}\left(A^{c}_{\eta,\epsilon,t}\right)\leq 
\frac{2d\epsilon^2 t}{\delta(\eta^2-\epsilon d t)^2}\quad
\textrm{ for any } 0\leq t< \frac{\eta^2}{\epsilon d}.
\]
Notice that 
\[
\mathbb{P}\left(A^{c}_{\eta,\epsilon,t}\right)\leq 
\frac{4d\epsilon^2 t}{\delta \eta^2}\quad
\textrm{ for any } 0\leq t< \frac{\eta^2}{2\epsilon d}.
\]
Consequently,
\[
\begin{split}
\bb E\left[\|h^{\epsilon}(t)\|^2\right]
\leq  & C^2_4(\eta,\|x_0\|,d,\delta)\epsilon^2+\\
& 4\left(C^2_1(\|x_0\|)+C^2_3(\|x_0\|,\delta,d)+e^{2c_1\eta^2}\right)\left(
\frac{4d\epsilon^2 t}{\delta \eta^2}
\right)^{\nicefrac{3}{4}}
\end{split}
\]
for any $0\leq t< \frac{\eta^2}{2\epsilon d}$.
The second part follows immediately from inequality \eqref{pario}.
\end{proof}

In Lemma \ref{covmat}, we will prove that the linear non-homogeneous process $\{y(t):t\geq 0\}$ has a limiting, non-degenerate law which is Gaussian with mean vector  zero and covariance matrix $\Sigma$ which is the unique solution of the Lyapunov matrix equation \eqref{lyapv}.

\subsection{An $\nicefrac{\epsilon}{3}$ proof}\label{gera}
We approximate the process $\{x^{\epsilon}(t): t\geq 0\}$ by a linear non-homogeneous process $\{z^{\epsilon}(t):=\varphi(t)+\sqrt{\epsilon}y(t):t\geq 0\}$ in which we can carry out ``explicit'' computations.
Since we need to compare solutions of various Stochastic Differential Equations with different initial conditions, we introduce some notation.
Let
$\xi $ be a random variable in $\mathbb R^d$ and let $T>0$. Let $\{\varphi(t,\xi ):t \geq 0\}$ denote the solution of
\begin{equation*}
\left\{
\begin{array}{r@{\;=\;}l}
\ud {\varphi(t,\xi )} & -F(\varphi(t,\xi ))\ud t \quad \textrm{ for any }t\geq 0, \\
\varphi(0,\xi ) & \xi.
\end{array}
\right.
\end{equation*}
Let $\{y(t,\xi,T):t \geq 0\}$ be the solution of the stochastic differential equation
\begin{eqnarray*}\label{pois}
\left\{
\begin{array}{r@{\;=\;}l}
\ud {y(t,\xi,T)} & -DF(\varphi(t,\xi))y(t,\xi,T)\ud t+\ud B({t+T}) \textrm{ for any }t\geq 0,\\
y(0,\xi,T) & 0
\end{array}
\right.
\end{eqnarray*}
and define $\{z^\epsilon(t,\xi,T):t \geq 0\}$ as $z^\epsilon(t,\xi,T) := \varphi(t,\xi) + \sqrt \epsilon y(t,\xi,T)$ for any $t\geq 0$.

Let $c\in \mathbb{R}$.
In what follows, we will always take $T=t_{\mathrm{mix}}^\epsilon + c w^\epsilon>0$ for every $\epsilon>0$ small enough, so for simplicity, we will omit it from the notation.

Let $\delta_\epsilon>0$ such that $\delta_\epsilon=o(1)$. For $\epsilon\ll 1$ define
\begin{equation}\label{kjhg}
t_{\mathrm{shift}}^\epsilon:=t_{\mathrm{mix}}^\epsilon-\delta_\epsilon>0.
\end{equation}

The following lemma is the key of the proof. 
Roughly speaking,
from Lemma \ref{falta} we see that the processes $\{x^{\epsilon}(t):t\geq 0\}$
and $\{z^{\epsilon}(t):t\geq 0\}$ are close enough 
for times of order
$\mathcal{O}(\ln(\nicefrac{1}{\epsilon}))$. Therefore we 
can shift the processes for a small time $\delta_\epsilon$ and then we coupled the remainder differences in a small time interval $[0,\delta_\epsilon]$. Since $\{z^{\epsilon}(t):t\geq 0\}$ is linear, then thermalisation (window cut-off) will be concluded from it.

\begin{lemma}\label{goodine}
For any $c\in \mathbb{R}$ and $\epsilon\ll 1 $ we have
\begin{equation}\label{buen}
\begin{split}
&\left|\ud_{\mathrm{TV}}(x^{\epsilon}(t_{\mathrm{mix}}^\epsilon+cw^\epsilon,x_0),\mu^{\epsilon})-
\ud_{\mathrm{TV}}(z^{\epsilon}(\delta_\epsilon,z^{\epsilon}(t_{\mathrm{shift}}^\epsilon+cw^\epsilon,x_0)),\mathcal{G}(0,\epsilon\Sigma))\right|
 \leq \\
& \ud_{\mathrm{TV}}(x^{\epsilon}(\delta_\epsilon,x^{\epsilon}(t_{\mathrm{shift}}^\epsilon+cw^\epsilon,x_0)),
z^{\epsilon}(\delta_\epsilon,x^{\epsilon}(t_{\mathrm{shift}}^\epsilon+cw^\epsilon,x_0)))+\\
& \ud_{\mathrm{TV}}(z^{\epsilon}(\delta_\epsilon,x^{\epsilon}(t_{\mathrm{shift}}^\epsilon+cw^\epsilon,x_0)),
z^{\epsilon}(\delta_\epsilon,z^{\epsilon}(t_{\mathrm{shift}}^\epsilon+cw^\epsilon,x_0)))+
\ud_{\mathrm{TV}}(\mathcal{G}(0,\epsilon\Sigma),\mu^\epsilon).
\end{split}
\end{equation}
\end{lemma}
\begin{proof}
Notice that
\begin{align*}
\ud_{\mathrm{TV}}&(x^{\epsilon}(t_{\mathrm{shift}}^\epsilon+cw^\epsilon+\delta_\epsilon,x_0),\mu^{\epsilon})=
\ud_{\mathrm{TV}}(x^{\epsilon}(\delta_\epsilon,x^{\epsilon}(t_{\mathrm{shift}}^\epsilon+cw^\epsilon,x_0)),\mu^{\epsilon}) \leq \\
& \ud_{\mathrm{TV}}(x^{\epsilon}(\delta_\epsilon,x^{\epsilon}(t_{\mathrm{shift}}^\epsilon+cw^\epsilon,x_0)),
z^{\epsilon}(\delta_\epsilon,x^{\epsilon}(t_{\mathrm{shift}}^\epsilon+cw^\epsilon,x_0)))+\\
& \ud_{\mathrm{TV}}(z^{\epsilon}(\delta_\epsilon,x^{\epsilon}(t_{\mathrm{shift}}^\epsilon+cw^\epsilon,x_0)),
z^{\epsilon}(\delta_\epsilon,z^{\epsilon}(t_{\mathrm{shift}}^\epsilon+cw^\epsilon,x_0)))+\\
& \ud_{\mathrm{TV}}(z^{\epsilon}(\delta_\epsilon,z^{\epsilon}(t_{\mathrm{shift}}^\epsilon+cw^\epsilon,x_0)),\mathcal{G}(0,\epsilon\Sigma))
+\ud_{\mathrm{TV}}(\mathcal{G}(0,\epsilon\Sigma),\mu^\epsilon).
\end{align*}
On the other hand, 
\begin{align*}
&\ud_{\mathrm{TV}}(z^{\epsilon}(\delta_\epsilon,z^{\epsilon}(t_{\mathrm{shift}}^\epsilon+cw^\epsilon,x_0)),\mathcal{G}(0,\epsilon\Sigma)) \leq \\
& \ud_{\mathrm{TV}}(z^{\epsilon}(\delta_\epsilon,z^{\epsilon}(t_{\mathrm{shift}}^\epsilon+cw^\epsilon,x_0)),
z^{\epsilon}(\delta_\epsilon,x^{\epsilon}(t_{\mathrm{shift}}^\epsilon+cw^\epsilon,x_0)))+\\
& \ud_{\mathrm{TV}}(z^{\epsilon}(\delta_\epsilon,x^{\epsilon}(t_{\mathrm{shift}}^\epsilon+cw^\epsilon,x_0)),
x^{\epsilon}(\delta_\epsilon,x^{\epsilon}(t_{\mathrm{shift}}^\epsilon+cw^\epsilon,x_0)))+\\
& \ud_{\mathrm{TV}}(x^{\epsilon}(\delta_\epsilon,x^{\epsilon}(t_{\mathrm{shift}}^\epsilon+cw^\epsilon,x_0)),\mu^{\epsilon})
+\ud_{\mathrm{TV}}(\mu^\epsilon,\mathcal{G}(0,\epsilon\Sigma)).
\end{align*}
Gluing both inequalities we deduce

\begin{align*}\label{ine5}
&\left|\ud_{\mathrm{TV}}(x^{\epsilon}(t_{\mathrm{shift}}^\epsilon+cw^\epsilon+\delta_\epsilon,x_0),\mu^{\epsilon})-
\ud_{\mathrm{TV}}(z^{\epsilon}(\delta_\epsilon,z^{\epsilon}(t_{\mathrm{shift}}^\epsilon+cw^\epsilon,x_0)),\mathcal{G}(0,\epsilon\Sigma))\right|
 \leq \nonumber\\
& \ud_{\mathrm{TV}}(x^{\epsilon}(\delta_\epsilon,x^{\epsilon}(t_{\mathrm{shift}}^\epsilon+cw^\epsilon,x_0)),
z^{\epsilon}(\delta_\epsilon,x^{\epsilon}(t_{\mathrm{shift}}^\epsilon+cw^\epsilon,x_0)))+\\
& \ud_{\mathrm{TV}}(z^{\epsilon}(\delta_\epsilon,x^{\epsilon}(t_{\mathrm{shift}}^\epsilon+cw^\epsilon,x_0)),
z^{\epsilon}(\delta_\epsilon,z^{\epsilon}(t_{\mathrm{shift}}^\epsilon+cw^\epsilon,x_0)))+
\ud_{\mathrm{TV}}(\mathcal{G}(0,\epsilon\Sigma),\mu^\epsilon).\nonumber
\end{align*}
\end{proof}
In what follows, we will prove that the upper bound of inequality \eqref{buen} is negligible as $\epsilon\rightarrow 0$.
\subsubsection{Short-time coupling}
A natural question arising is how to obtain explicit  ``good" bounds
for the total variation distance between $x^{\epsilon}(t)$ and $z^{\epsilon}(t)$.
 Using the celebrated 
Cameron-Martin-Girsanov Theorem, a coupling on the path space can be done and it is possible to
establish bounds on the total variation distance using the Pinsker inequality of such diffusions. This method only provides a coupling over short time intervals. For more details see \cite{EZ},
\cite{BJ}, \cite{KALS} and the references therein.
On the other hand, ``explicit" bounds for the total variation distance between transition probabilities of diffusions with different drifts are
derived using analytic arguments. 
This approach also works for the stationary measures of the diffusions.
For further details see \cite{BRS} and the references therein.

In order to avoid homogenisation arguments for $F$, 
we use the Hellinger approach
developed in \cite{KALS} for obtain an upper bound for the total variation distance between the non-linear model $x^{\epsilon}(t)$  with the linear non-homogeneous model $z^{\epsilon}(t)$ in a short time interval. That upper bound is enough for our purposes. As we can notice in Theorem $5.1$ in \cite{KALS}, we need to carry out second-moment estimates of the distance between the vector fields associated to the diffusions $\{x^{\epsilon}(t):t\geq 0\}$ and $\{z^{\epsilon}(t):t\geq 0\}$, respectively. It is exactly the estimate that we did in Lemma \ref{falta}. 

\begin{proposition}\label{a18}
Assume that 
 \eqref{C2} and   \eqref{C3} hold.
Let $\delta_\epsilon>0$ such that $\delta_\epsilon=o(1)$. Then for any $c\in \mathbb{R}$
\[
\lim\limits_{\epsilon\rightarrow 0}
\ud_{\mathrm{TV}}(x^{\epsilon}(\delta_\epsilon,x^{\epsilon}(t_{\mathrm{shift}}^\epsilon+cw^\epsilon,x_0)),
z^{\epsilon}(\delta_\epsilon,x^{\epsilon}(t_{\mathrm{shift}}^\epsilon+cw^\epsilon,x_0)))=0,
\]
where $t_{\mathrm{shift}}^\epsilon$ is given by \eqref{kjhg}.
\end{proposition}

\begin{proof}
Let $T^\epsilon=t_{\mathrm{shift}}^\epsilon + c w^\epsilon>0$ for $\epsilon\ll 1$.  
Notice that
\[
\begin{split}
\ud_{\mathrm{TV}}&(x^{\epsilon}(\delta_\epsilon,x^{\epsilon}(T^{\epsilon},x_0)),z^{\epsilon}(\delta_\epsilon,x^{\epsilon}(T^{\epsilon},x_0)))\leq \\
&\int\limits_{\mathbb{R}^d}
\ud_{\mathrm{TV}}(x^{\epsilon}(\delta_\epsilon,u),z^{\epsilon}(\delta_\epsilon,{u}))
\mathbb{P}(x^{\epsilon}(T^{\epsilon},x_0)\in \ud u).
\end{split}
\]
For short, denote by $\mathbb{P}^\epsilon(\ud u)$  the probability measure
$\mathbb{P}(x^{\epsilon}(T^{\epsilon},x_0)\in \ud u)$.
Let $K$ be a positive constant. Then
\begin{equation}\label{aci1}
\begin{split}
\ud_{\mathrm{TV}}&(x^{\epsilon}(\delta_\epsilon,x^{\epsilon}(T^{\epsilon},x_0)),z^{\epsilon}(\delta_\epsilon,x^{\epsilon}(T^{\epsilon},x_0)))\leq \\
&\int\limits_{\|u\|\leq K}
\ud_{\mathrm{TV}}(x^{\epsilon}(\delta_\epsilon,u),z^{\epsilon}(\delta_\epsilon,{u}))\mathbb{P}^{\epsilon}(\ud u)
+\mathbb{P}\left(\|x^{\epsilon}(T^{\epsilon},x_0)\|>K\right).
\end{split}
\end{equation}
Now, we prove that the upper bound of \eqref{aci1} is negligible as $\epsilon\rightarrow 0$. From the Markov inequality we get
\[
\mathbb{P}\left(\|x^{\epsilon}(T^{\epsilon},x_0)\|>K\right)\leq 
\frac{\mathbb{E}\left[
\|x^{\epsilon}(T^{\epsilon},x_0)\|^2\right]}{K^2}.
\]
Recall the well--known inequality $(x+y)^2\leq 2(x^2+y^2)$ for any $x,y\in \mathbb{R}$. Then
\[
\begin{split}
\mathbb{E}\left[
\|x^{\epsilon}(T^{\epsilon},x_0)\|^2\right]
\leq & 2\mathbb{E}\left[
\|x^{\epsilon}(T^{\epsilon},x_0)-\varphi(T^{\epsilon},x_0)\|^2\right]\\
&+2\|\varphi(T^{\epsilon},x_0)\|^2.
\end{split}
\]
From inequality \eqref{zeroth} and inequality \eqref{mono} we have 
\[
\mathbb{E}\left[
\|x^{\epsilon}(T^{\epsilon},x_0)\|^2\right]
\leq \frac{\epsilon d}{\delta}+2 e^{-2\delta T^{\epsilon}}\|x_0\|^2,
\]
which allows to deduce 
\begin{equation}\label{pio1}
\lim\limits_{\epsilon \rightarrow 0}\mathbb{P}\left(\|x^{\epsilon}(T^{\epsilon},x_0)\|>{K}\right)=0.
\end{equation}
Now, we analyse $\int\limits_{\|u\|\leq K}
\ud_{\mathrm{TV}}(x^{\epsilon}(\delta_\epsilon,u),z^{\epsilon}(\delta_\epsilon,{u}))\mathbb{P}^{\epsilon}(\ud u)$.
From the Theorem $5.1$ in \cite{KALS} we obtain
\[
\begin{split}
&\int\limits_{\|u\|\leq K}
\ud_{\mathrm{TV}}(x^{\epsilon}(\delta_\epsilon,u),z^{\epsilon}(\delta_\epsilon,{u}))\mathbb{P}^{\epsilon}(\ud u)\leq 
\frac{1}{\epsilon}\int\limits_{\|u\|\leq K}
\int\limits_{0}^{\delta_\epsilon}\bb E
\left[
\|I^\epsilon(s,u)
\|^2
\right]\ud s \mathbb{P}^{\epsilon}(\ud u),
\end{split}
\]
where 
\[
I^\epsilon(s,u):=F(x^\epsilon(s,u)
-DF(\varphi(s,u))z^{\epsilon}(s,u)+
DF(\varphi(s,u))\varphi(s,u)-F(\varphi(s,u))
\]
for any $s\geq 0$ and $u\in \mathbb{R}^d$.
Following the same argument using in Lemma \ref{falta},
for any $u\in \mathbb{R}^d$ such that $\|u\|\leq K$ and 
$0\leq s\leq \delta_\epsilon$ we deduce 
\[
\bb E \left[\|I^\epsilon(s,u)\|^2\right]\leq \frac{1}{\delta^2}\left(C(K,d,\delta)\epsilon^2 \delta_\epsilon+\tilde{C}(K,d,\delta)\epsilon^{\nicefrac{3}{2}}\delta_\epsilon^{\nicefrac{7}{4}}\right),
\]
where $C(K,d,\delta)$ and $\tilde{C}(K,d,\delta)$ only depend on $K$, $d$ and $\delta$. Therefore, 
\begin{equation}\label{pio2}
\lim\limits_{\epsilon \rightarrow 0}
\int\limits_{\|u\|\leq K}
\ud_{\mathrm{TV}}(x^{\epsilon}(\delta_\epsilon,u),z^{\epsilon}(\delta_\epsilon,{u}))\mathbb{P}^{\epsilon}(\ud u)=0.
\end{equation}
Inequality \eqref{aci1} together with relations \eqref{pio1} and \eqref{pio2} allow us to deduce  the desired result.
\end{proof}

\subsubsection{Linear non-homogeneous coupling}
In this part, we couple two non-homogeneous solutions $z^{\epsilon}(t,x)$ and $z^{\epsilon}(t,y)$ for short time $t\ll 1$ and initials conditions $x$ and $y$ such that $\|x-y\|$ small enough.  

\begin{proposition}\label{a19}
Assume that 
 \eqref{C2} and   \eqref{C3} hold.
Let $\delta_\epsilon=\epsilon^{\theta}$ for some 
$\theta\in (0,\nicefrac{1}{2})$.
Then for any $c\in \mathbb{R}$
\[
\lim\limits_{\epsilon\rightarrow 0}\ud_{\mathrm{TV}}(z^{\epsilon}(\delta_\epsilon,x^{\epsilon}(t_{\mathrm{shift}}^\epsilon+cw^\epsilon,x_0)),z^{\epsilon}(\delta_\epsilon,z^{\epsilon}(t_{\mathrm{shift}}^\epsilon+cw^\epsilon,x_0)))
=0,
\]
where $t_{\mathrm{shift}}^\epsilon$ is given by \eqref{kjhg}.
\end{proposition}
\begin{proof}
Let $T^\epsilon=t_{\mathrm{shift}}^\epsilon + c w^\epsilon>0$ for $\epsilon\ll 1$.  
Notice that
\[
\begin{split}
&\ud_{\mathrm{TV}}(z^{\epsilon}(\delta_\epsilon,x^{\epsilon}(T^\epsilon,x_0)),z^{\epsilon}(\delta_\epsilon,z^{\epsilon}(T^\epsilon,x_0)))\leq \\
&\int\limits_{\mathbb{R}^d\times \mathbb{R}^d}
\ud_{\mathrm{TV}}(z^{\epsilon}(\delta_\epsilon,u),z^{\epsilon}(\delta_\epsilon,\tilde{u}))\mathbb{P}(x^{\epsilon}(T^\epsilon,x_0)\in \ud u, z^{\epsilon}(T^\epsilon,x_0)\in \ud \tilde{u}),
\end{split}
\]
For short, we denote by $\mathbb{P}^\epsilon(\ud u, \ud \tilde{u})$ for the coupling 
\[\mathbb{P}(x^{\epsilon}(T^\epsilon,x_0)\in \ud u, z^{\epsilon}(T^\epsilon,x_0)\in \ud \tilde{u}).\]
Let $K$ and $\tilde{K}$ any positive constants. Then
\begin{equation}\label{cid1}
\begin{split}
\ud_{\mathrm{TV}}&(z^{\epsilon}(\delta_\epsilon,x^{\epsilon}(T^\epsilon,x_0)),z^{\epsilon}(\delta_\epsilon,z^{\epsilon}(T^\epsilon,x_0)))\leq \\
&\int\limits_{\|u\|\leq K, \|\tilde{u}\|\leq \tilde{K}}
\ud_{\mathrm{TV}}(z^{\epsilon}(\delta_\epsilon,u),z^{\epsilon}(\delta_\epsilon,\tilde{u}))\mathbb{P}^{\epsilon}(\ud u, \ud \tilde{u})+
\\
&\mathbb{P}\left(\|x^{\epsilon}(T^{\epsilon},x_0)\|>K\right)+
\mathbb{P}\left(\|z^{\epsilon}(T^{\epsilon},x_0)\|>\tilde{K}\right).
\end{split}
\end{equation}
Now, we prove that the upper bound of \eqref{cid1} is negligible as $\epsilon\rightarrow 0$. From relation \eqref{pio1} we have
\[
\lim\limits_{\epsilon \to 0}\mathbb{P}\left(\|x^{\epsilon}(T^{\epsilon},x_0)\|>K\right)=0.
\]
Similar ideas as in the proof of relation \eqref{pio1} and item ii) of Proposition \ref{a16} yield
\[
\lim\limits_{\epsilon \to 0}\mathbb{P}\left(\|z^{\epsilon}(T^{\epsilon},x_0)\|>\tilde{K}\right)=0.
\]
Since the stochastic differential equation associated to 
$\{y(t,u,T^\epsilon):t\geq 0\}$ is linear then the variation of parameters formula allows us to deduce that
\[
z^{\epsilon}(\delta_\epsilon,u)=\Phi(\delta_\epsilon)u+
\sqrt{\epsilon}(\Phi(\delta_\epsilon))
\int\limits_{0}^{\delta_\epsilon}(\Phi(s))^{-1}\ud (B(T^\epsilon+s)- B(T^\epsilon))
\]
for any $u\in \mathbb{R}^d$,
where
$\{\Phi(t): t\geq 0\}$ is the solution of the matrix differential equation:
\begin{equation*}
\left\{
\begin{array}{r@{\;=\;}l}
\tfrac{\ud}{\ud t} \Phi(t)&-\Phi(t) DF(\varphi(t+T^\epsilon)) \quad  
\textrm{  for } t\geq 0,\\
\Phi(0) & \mathrm{I}_d.
\end{array}
\right.
\end{equation*}
Observe that for any $\upsilon\in \mathbb{R}^d$, 
$z^{\epsilon}(\delta_\epsilon,\upsilon)$ has Gaussian distribution with mean vector $\varphi(\delta_\epsilon,\upsilon)$ and covariance matrix $\epsilon \Sigma(\delta_\epsilon)$, where $\Sigma(\delta_\epsilon)$ is the covariance matrix of the random vector 
\[
(\Phi(\delta_\epsilon))
\int\limits_{0}^{\delta_\epsilon}(\Phi(s))^{-1}\ud (B(T^\epsilon+s)- B(T^\epsilon))
\]
which does not depend on $\upsilon$.
Moreover, using the It\^o formula we deduce 
\[
\Sigma(\delta_\epsilon)=\Phi(\delta_\epsilon)\int\limits_{0}^{\delta_\epsilon}(\Phi(s))^{-1}\left((\Phi(s))^{-1}\right)^*\ud s
(\Phi(\delta_\epsilon))^*
\]
From the Lebesgue Differentiation Theorem we obtain
\begin{equation*}\label{ldt}
\lim\limits_{\epsilon \to 0}\frac{\Sigma(\delta_\epsilon)}{\delta_\epsilon}=\mathrm{I_d}.
\end{equation*} 
The latter allows us to deduce  that
$
\|(\Sigma(\delta_\epsilon))^
{-\nicefrac{1}{2}}\|\leq 
(\delta_\epsilon)^{-\nicefrac{1}{2}}
C(d),
$
where $C(d)$ is an absolute positive constant that only depends on $d$.
From item iii) of Lemma \ref{a1} and Lemma \ref{a2} we have
\[
\ud_{\mathrm{TV}}(z^{\epsilon}(\delta_\epsilon,u),z^{\epsilon}(\delta_\epsilon,\tilde{u}))\leq \frac{1}{\sqrt{2\pi\epsilon}}
\|(\Sigma(\delta_\epsilon))^
{-\nicefrac{1}{2}}\Phi(\delta_\epsilon)\left(u-\tilde{u}\right)\|
\]
for any $u,\tilde{u}\in \mathbb{R}^d$.
Then
\[
\ud_{\mathrm{TV}}(z^{\epsilon}(\delta_\epsilon,u),z^{\epsilon}(\delta_\epsilon,\tilde{u}))\leq \frac{C_1(d)}{\sqrt{\epsilon \delta_\epsilon}}
\|u-\tilde{u}\|\quad \textrm{ for any } u,\tilde{u}\in \mathbb{R}^d,
\]
where
$C_1(d)$ is an absolute positive constant that only depends on $d$. Therefore,
\[ 
\begin{split}
\int\limits_{\|u\|\leq K,\|\tilde{u}\|\leq \tilde{K}}
\ud_{\mathrm{TV}}(z^{\epsilon}(\delta_\epsilon,u),z^{\epsilon}(\delta_\epsilon,\tilde{u}))\mathbb{P}^{\epsilon}(\ud u, \ud \tilde{u})\leq &\\
&\hspace{-8cm}\frac{C_1(d)}{\sqrt{\epsilon \delta_\epsilon}} \mathbb{E}\left[\|x^{\epsilon}(T^\epsilon,x_0)-z^{\epsilon}(T^\epsilon,x_0) \|\right]\leq 
\frac{C_1(d)}{\sqrt{\epsilon \delta_\epsilon}} \left(\mathbb{E}\left[\|x^{\epsilon}(T^\epsilon,x_0)-z^{\epsilon}(T^\epsilon,x_0) \|^2\right]\right)^{\nicefrac{1}{2}}.
\end{split}
\]
From Lemma \ref{falta} we deduce 
\[
\lim\limits_{\epsilon \to 0}
\int\limits_{\|u\|\leq K,\|\tilde{u}\|\leq \tilde{K}}
\ud_{\mathrm{TV}}(z^{\epsilon}(\delta_\epsilon,u),z^{\epsilon}(\delta_\epsilon,\tilde{u}))\mathbb{P}^{\epsilon}(\ud u, \ud \tilde{u})=0.
\]
Putting all pieces together, we get the statement.
\end{proof}

\subsubsection{Window Cut-off}
Remind that $z^{\epsilon}(t)=\varphi(t)+\sqrt{\epsilon}y(t)$, $t\geq 0$, where $\{y(t): t\geq 0\}$ satisfies the linear non-homogeneous stochastic differential equation:
\begin{equation*}
\label{linearhomo}
\left\{
\begin{array}{r@{\;=\;}l}
\ud y(t) & -DF(\varphi(t)) y(t) \ud t +\ud B(t) \quad \textrm{ for }t\geq 0,\\
y(0) & 0.
\end{array}
\right.
\end{equation*}
Therefore, for any $t>0$, $z^{\epsilon}(t)$ has Gaussian distribution with zero mean vector $\varphi(t)$ and covariance matrix $\epsilon\Sigma (t)$, where 
$\{\Sigma(t): t\geq 0\}$ 
is the the solution to the deterministic matrix differential equation:
\begin{equation*}
\left\{
\begin{array}{r@{\;=\;}l}
\frac{\ud }{\ud t}\Sigma(t) & -DF(\varphi(t))\Sigma(t)-\Sigma(t)(DF(\varphi(t)))^*+\mathrm{I}_d \quad \textrm{ for }t\geq 0,\\
\Sigma(0) & 0.
\end{array}
\right.
\end{equation*}

Under \eqref{C2}, we can prove that $\varphi(t)\rightarrow 0$  and
$\Sigma(t)\rightarrow \Sigma$ as $t\rightarrow +\infty$, where $\Sigma$ is a symmetric and positive definite matrix (See Lemma \ref{covmat}).
Therefore, $z^{\epsilon}(t)$ converges in distribution to a random vector $z^{\epsilon}(\infty)$ as  $t\rightarrow +\infty$, where $z^{\epsilon}(\infty)$ has Gaussian law with zero mean vector  and covariance matrix $\epsilon\Sigma$.
Using item iii) of Lemma \ref{a1}, Lemma \ref{a4}, Lemma \ref{a6}  the convergence can be easily improved to be in total variation distance.
Bearing all this in mind, we can measure how abrupt is the convergence to its equilibrium. Define
\[
\bar{D}^{\epsilon}(t):=\ud_{\mathrm{TV}}(z^{\epsilon}(t),z^{\epsilon}(\infty))=\ud_{\mathrm{TV}}(\mathcal{G}(\varphi(t), \epsilon\Sigma(t)),\mathcal{G}(0, \epsilon\Sigma))
\]
for any $t>0$.

\begin{proposition}\label{prop2}
Assume that 
 \eqref{C2} holds.
Let $\delta_\epsilon\geq 0$ such that $\delta_\epsilon=o\left(1\right)$.
For any $c\in \mathbb{R}$ we have
\[
\lim\limits_{\epsilon\rightarrow 0}\Big |\bar{D}^{\epsilon}(t_{\mathrm{shift}}^\epsilon+\delta_\epsilon+cw^{\epsilon})-{D}^{\epsilon}(t_{\mathrm{shift}}^\epsilon+\delta_\epsilon+cw^{\epsilon}) \Big |=0,
\]
where $t_{\mathrm{shift}}^\epsilon$ is given by \eqref{kjhg}, 
\[{D}^{\epsilon}(t):=\ud_{\mathrm{TV}}\left(\mathcal{G}\left(\frac{(t-\tau)^{\ell-1}}{e^{\lambda (t-\tau)}\sqrt{\epsilon}}\Sigma^{-\nicefrac{1}{2}}\sum\limits_{k=1}^{m} e^{i \theta_k (t-\tau)} v_k,\mathrm{I}_d \right),\mathcal{G}(0,\mathrm{I}_d)\right)\]
for any $t\geq \tau$, with $\lambda$, $\ell$, $\tau$, $\theta_1,\ldots, \theta_m \in [0,2\pi)$, $v_1,\ldots,v_m$ are the constants and vectors  associated to $x_0$
in {\em{Lemma \ref{asymp}}},
and the matrix $\Sigma$ is the unique solution of the matrix Lyapunov equation:
\begin{equation*}
\label{lyapv2}
DF(0)X+X (DF(0))^*=\mathrm{I}_d.
\end{equation*}
\end{proposition}

\begin{proof}
Let $t>0$. From the triangle inequality and item ii), item iii) of Lemma \ref{a1}  we obtain
\begin{align*}
\bar{D}^{\epsilon}(t)\leq &
\ud_{\mathrm{TV}}(\mathcal{G}(\varphi(t), \epsilon\Sigma(t)),\mathcal{G}(\varphi(t), \epsilon\Sigma))+
\ud_{\mathrm{TV}}(\mathcal{G}(\varphi(t), \epsilon\Sigma),\mathcal{G}(0, \epsilon\Sigma))\leq \\
&\ud_{\mathrm{TV}}(\mathcal{G}(0,\Sigma(t)),\mathcal{G}(0,\Sigma))+
\ud_{\mathrm{TV}}\left(\mathcal{G}\left(\frac{1}{\sqrt{\epsilon}} \varphi(t),\Sigma\right),\mathcal{G}(0,\Sigma)\right).
\end{align*}
A similar argument allows us to deduce that
\begin{equation}\label{fg1}
\Big |\bar{D}^{\epsilon}(t)-\ud_{\mathrm{TV}}\left(\mathcal{G}\left(\frac{1}{\sqrt{\epsilon}} \varphi(t),\Sigma\right),\mathcal{G}(0,\Sigma)\right) \Big|\leq
\ud_{\mathrm{TV}}(\mathcal{G}(0,\Sigma(t)),\mathcal{G}(0,\Sigma)).
\end{equation}
Using Lemma \ref{a6} we get
\[\lim\limits_{t\rightarrow +\infty}\ud_{\mathrm{TV}}(\mathcal{G}(0,\Sigma(t)),\mathcal{G}(0,\Sigma))=0.\]
Therefore, the cut-off phenomenon can be deduced from the distance
\begin{align*}
\hat{D}^{\epsilon}(t)&:=\ud_{\mathrm{TV}}\left(\mathcal{G}\left(\frac{1}{\sqrt{\epsilon}} \varphi(t),\Sigma\right),\mathcal{G}(0,\Sigma)\right)\\
&=\ud_{\mathrm{TV}}\left(\mathcal{G}\left(\Sigma^{-\nicefrac{1}{2}}\frac{1}{\sqrt{\epsilon}}\varphi(t),\mathrm{I}_d\right),\mathcal{G}(0,\mathrm{I}_d)\right)
\end{align*}
for any $t>0$,
where the last equality follows from item iii) of Lemma \ref{a1}.
Using the constants  and vectors associated to $x_0$ in Lemma \ref{asymp}, for any $t>\tau$ define
 \[
{D}^{\epsilon}(t):=\ud_{\mathrm{TV}}\left(\mathcal{G}\left(\Sigma^{-\nicefrac{1}{2}}\frac{(t-\tau)^{\ell-1}}{e^{\lambda (t-\tau)}\sqrt{\epsilon}}\sum\limits_{k=1}^{m} e^{i \theta_k (t-\tau)} v_k,\mathrm{I}_d\right),\mathcal{G}(0,\mathrm{I}_d)\right)
\]
and
\[
\textrm{R}^{\epsilon}(t):=
\ud_{\mathrm{TV}}\left(\mathcal{G}\left(\Sigma^{-\nicefrac{1}{2}}\frac{1}{\sqrt{\epsilon}}\varphi(t),\mathrm{I}_d\right),\mathcal{G}\left(\Sigma^{-\nicefrac{1}{2}}\frac{(t-\tau)^{\ell-1}}{e^{\lambda (t-\tau)}\sqrt{\epsilon}}\sum\limits_{k=1}^{m} e^{i \theta_k (t-\tau)} v_k,\mathrm{I}_d\right)\right).
\]
From item ii) of Lemma \ref{a1} we 
deduce that 
\[
\textrm{R}^{\epsilon}(t)=
\ud_{\mathrm{TV}}\left(\mathcal{G}\left(\frac{1}{\sqrt{\epsilon}}\Sigma^{-\nicefrac{1}{2}}\left(\varphi(t)-\frac{(t-\tau)^{\ell-1}}{e^{\lambda (t-\tau)}}\sum\limits_{k=1}^{m} e^{i \theta_k (t-\tau)} v_k\right),\mathrm{I}_d\right),\mathcal{G}\left(0,\mathrm{I}_d\right)\right).
\]
A similar argument  for obtain inequality \eqref{fg1} allows to  show  that
\begin{equation}\label{fg2}
\Big|\hat{D}^{\epsilon}(t)- {D}^{\epsilon}(t)\Big |\leq \textrm{R}^{}(t) \quad \textrm{ for any }
t>\tau.
\end{equation}
From inequalities \eqref{fg1} and\eqref{fg2} we obtain
\[\Big|{D}^{\epsilon}(t)- \bar{D}^{\epsilon}(t)\Big |\leq \textrm{R}^{\epsilon}(t)+\ud_{\mathrm{TV}}(\mathcal{G}(0,\Sigma(t)),\mathcal{G}(0,\Sigma))
\]
for any $t>\tau$.
Straightforward computations led us to
\begin{equation}\label{hju}
\lim\limits_{\epsilon \to 0}\frac{e^{-\lambda (t_{\mathrm{shift}}^\epsilon+\delta_\epsilon+cw^{\epsilon}-\tau)}(t_{\mathrm{shift}}^\epsilon+\delta_\epsilon+cw^{\epsilon}-\tau)^{\ell-1}}{\sqrt{\epsilon}}=(2\lambda)^{1-\ell}e^{-c}
\end{equation}
for any $c\in \mathbb{R}$.
Therefore, Lemma \ref{asymp} together relation \eqref{hju}    
 allow to deduce that
\[
\lim\limits_{\epsilon\rightarrow 0}\textrm{R}^{\epsilon}(t_{\mathrm{shift}}^\epsilon+\delta_\epsilon+cw^{\epsilon})=0
\]
for any $c\in \mathbb{R}$. Consequently, we obtain the statement.
\end{proof}

\subsubsection{The invariant measure}
In this section, we prove that the invariant measure $\mu^{\epsilon}$ of the evolution \eqref{SEDO} is well approximated in total variation distance by a Gaussian distribution with zero mean vector and covariance matrix $\epsilon\Sigma$,
where $\Sigma$ is the unique solution of the matrix Lyapunov equation:
\begin{equation*}
DF(0)X+X (DF(0))^*=\mathrm{I}_d.
\end{equation*}

\begin{proposition}\label{a20}
Assume that 
 \eqref{C2} and   \eqref{C3} hold. Then
\[
\lim\limits_{\epsilon\rightarrow 0}\ud_{\mathrm{TV}}(\mathcal{G}(0,\epsilon \Sigma),\mu^{\epsilon})=0.
\]
\end{proposition}
\begin{proof}
Recall that $z^{\epsilon}(t)=\varphi(t)+\sqrt{\epsilon}y(t)$ for any $t\geq 0$.
Note that for any $s,t\geq 0$ and $x\in \mathbb{R}^d$ we have
\begin{equation}\label{plo}
\begin{split}
\ud_{\mathrm{TV}}(\mathcal{G}(0,\epsilon \Sigma),\mu^{\epsilon})\leq & \ud_{\mathrm{TV}}(\mathcal{G}(0,\epsilon \Sigma),z^{\epsilon}(s+t,x))+\\
&\ud_{\mathrm{TV}}(z^{\epsilon}(s+t,x),x^{\epsilon}(s+t,x))+
\ud_{\mathrm{TV}}(x^{\epsilon}(s+t,x),\mu^{\epsilon}).
\end{split}
\end{equation}
Observe that
\begin{align*}
\ud_{\mathrm{TV}}(\mathcal{G}(0,\epsilon \Sigma),z^{\epsilon}(s+t,x))=\ud_{\mathrm{TV}}(\mathcal{G}(0,\epsilon \Sigma),\mathcal{G}(\varphi(s+t,x),\epsilon \Sigma(s+t))),
\end{align*}
where $\Sigma(t)$ is the covariance matrix of $y(t)$. Therefore, using the triangle inequality together with 
item ii) and item iii) of Lemma \ref{a1}
we obtain
\begin{equation}\label{rty}
\begin{split}
\ud_{\mathrm{TV}}(\mathcal{G}(0,\epsilon \Sigma),z^{\epsilon}(s+t,x))\leq &
\ud_{\mathrm{TV}}(\mathcal{G}(0, \Sigma),\mathcal{G}(0,\Sigma(s+t)))+\\
&
\ud_{\mathrm{TV}}\left(\mathcal{G}\left(\frac{\varphi(s+t,x)}{\sqrt{\epsilon}}, \Sigma\right),\mathcal{G}(0,\Sigma)\right).
\end{split}
\end{equation}
Let $s^{\epsilon}\ll
\epsilon^{\nicefrac{1}{2019}}$ and $t_{\mathrm{mix}}^\epsilon \ll t^{\epsilon}:=\frac{1}{\epsilon^{\nicefrac{1}{8}}}$.
By Lemma \ref{a6} and Lemma \ref{asympgeral} we obtain
\[
\lim\limits_{\epsilon\rightarrow 0}\ud_{\mathrm{TV}}(\mathcal{G}(0, \Sigma),\mathcal{G}(0,\Sigma(s^{\epsilon}+t^{\epsilon})))=0.
\]
From \eqref{C2} we obtain $\|\varphi(s^\epsilon+t^\epsilon,x)\|\leq \|x\|e^{-\delta (s^\epsilon+t^\epsilon)}$. Straightforward computations led us to deduce that
\[
\lim\limits_{\epsilon \to 0}\frac{\|\varphi(s^\epsilon+t^\epsilon,x)\|}{\sqrt{\epsilon}}=0.
\]
The latter together with item iii) of Lemma \ref{a1} imply
\[
\lim\limits_{\epsilon\rightarrow 0}
\ud_{\mathrm{TV}}\left(\mathcal{G}\left(\frac{\varphi(s^{\epsilon}+t^{\epsilon},x)}{\sqrt{\epsilon}}, \Sigma\right),\mathcal{G}(0,\Sigma)\right)=0.
\]
Therefore, from inequality \eqref{rty} we obtain 
\[
\lim\limits_{\epsilon\rightarrow 0}
\ud_{\mathrm{TV}}(\mathcal{G}(0,\epsilon \Sigma),z^{\epsilon}(s^\epsilon+t^\epsilon,x))=0
\]
Now, using the same ideas as in Proposition \ref{a18} (much easier since
$t^\epsilon\gg t_{\mathrm{mix}}^\epsilon$) we deduce 
\[
\lim\limits_{\epsilon\rightarrow 0}\ud_{\mathrm{TV}}(z^{\epsilon}(s^\epsilon+t^\epsilon,x),x^{\epsilon}(s^\epsilon+t^\epsilon,x))=0.
\]
From inequality \eqref{plo}, it remains to prove that
\[
\lim\limits_{\epsilon\rightarrow 0}\ud_{\mathrm{TV}}(x^{\epsilon}(s^\epsilon+t^\epsilon,x),\mu^{\epsilon})=0.
\]
Notice that
\[
\ud_{\mathrm{TV}}(x^{\epsilon}(s+t,x),\mu^{\epsilon})\leq \int\limits_{\mathbb{R}^d}{\ud_{\mathrm{TV}}(x^{\epsilon}(s+t,x),x^{\epsilon}(s+t,\bar{x}))\mu^{\epsilon}(\ud \bar{x})}.
\]
Since the stochastic differential equation associated to $\{y^\epsilon(t):t\geq 0\}$ {\it{is not homogeneous}} we should improve the notation as we did in the beginning of
Subsection \ref{gera}. Following such notation,  we always use $T=t^\epsilon$. Therefore, for simplicity, we can omit as we did in Proposition \ref{a18}.
Then
\begin{align*}
\int\limits_{\mathbb{R}^d}& \ud_{\mathrm{TV}}(x^{\epsilon}(s+t,x),x^{\epsilon}(s+t,\bar{x}))\mu^{\epsilon}(\ud \bar{x})\leq \\
& \ud_{\mathrm{TV}}(
x^{\epsilon}(s,x^{\epsilon}(t,x)),z^{\epsilon}(s,x^{\epsilon}(t,x)))+
 \ud_{\mathrm{TV}}(
z^{\epsilon}(s,x^{\epsilon}(t,x)),z^{\epsilon}(s,z^{\epsilon}(t,x)))+\\
 & \int\limits_{\mathbb{R}^d}{
\ud_{\mathrm{TV}}(
z^{\epsilon}(s,z^{\epsilon}(t,x)),z^{\epsilon}(s,z^{\epsilon}(t,\bar{x})))
 \mu^{\epsilon}(\ud \bar{x})}+\\
 & \int\limits_{\mathbb{R}^d}{
\ud_{\mathrm{TV}}(
z^{\epsilon}(s,z^{\epsilon}(t,\bar{x})),x^{\epsilon}(s,x^{\epsilon}(t,\bar{x})))
 \mu^{\epsilon}(\ud \bar{x})}.
\end{align*}
Again, using the same ideas as in Proposition \ref{a18} and Proposition \ref{a19} (much easier since
$t^\epsilon\gg t_{\mathrm{mix}}^\epsilon$) we deduce 
\[
\lim\limits_{\epsilon\rightarrow 0}
\ud_{\mathrm{TV}}(
x^{\epsilon}(s^\epsilon,x^{\epsilon}(t^\epsilon,x)),z^{\epsilon}(s^\epsilon,x^{\epsilon}(t^\epsilon,x)))=0.
\]
and
\[
\lim\limits_{\epsilon\rightarrow 0}
\ud_{\mathrm{TV}}(
z^{\epsilon}(s^\epsilon,x^{\epsilon}(t^\epsilon,x)),z^{\epsilon}(s^\epsilon,z^{\epsilon}(t^\epsilon,x)))=0.
\]
Fix $R>0$. We split the remainders integrals as follows
\begin{align*}
\int\limits_{\mathbb{R}^d}&
\ud_{\mathrm{TV}}(
z^{\epsilon}(s,z^{\epsilon}(t,x)),z^{\epsilon}(s,z^{\epsilon}(t,\bar{x})))
 \mu^{\epsilon}(\ud \bar{x})
\leq \\
& \int\limits_{\|\bar{x}\|\leq R}{
\ud_{\mathrm{TV}}(
z^{\epsilon}(s,z^{\epsilon}(t,x)),z^{\epsilon}(s,z^{\epsilon}(t,\bar{x})))
 \mu^{\epsilon}(\ud \bar{x})}+ \mu^\epsilon(\|x\|>R)
\end{align*}
and
\begin{align*}
\int\limits_{\mathbb{R}^d}&
\ud_{\mathrm{TV}}(
z^{\epsilon}(s,z^{\epsilon}(t,\bar{x})),x^{\epsilon}(s,x^{\epsilon}(t,\bar{x})))
 \mu^{\epsilon}(\ud \bar{x})
\leq \\
& \int\limits_{\|\bar{x}\|\leq R}
\ud_{\mathrm{TV}}(
z^{\epsilon}(s,z^{\epsilon}(t,\bar{x})),x^{\epsilon}(s,x^{\epsilon}(t,\bar{x})))
 \mu^{\epsilon}(\ud \bar{x})+ \mu^\epsilon(\|x\|>R).
\end{align*}
Notice that
\[
 \int\limits_{\|\bar{x}\|\leq R}{
\ud_{\mathrm{TV}}(
z^{\epsilon}(s,z^{\epsilon}(t,x)),z^{\epsilon}(s,z^{\epsilon}(t,\bar{x})))
 \mu^{\epsilon}(\ud \bar{x})}\leq
 \kappa(R,\|x\|) \frac{1}{\sqrt{\epsilon s}}e^{-\delta(t+s)},
\]
where $\kappa(R,\|x\|)$ is a non-negative constant
and $\delta>0$ comes from  \eqref{C2}.
By taking $t^\epsilon=\frac{1}{\epsilon^{\nicefrac{1}{8}}}\gg t_{\mathrm{mix}}^\epsilon$ and $s^\epsilon=\frac{1}{\epsilon^{\nicefrac{1}{2019}}}$ we obtain
$e^{-\delta {t^\epsilon}}
=o(\sqrt{\epsilon s^{\epsilon}})$. Therefore,
\[
\lim\limits_{\epsilon\rightarrow 0}
\int\limits_{\|\bar{x}\|\leq R}{
\ud_{\mathrm{TV}}(
z^{\epsilon}(s^\epsilon,z^{\epsilon}(t^\epsilon,x)),z^{\epsilon}(s^\epsilon,z^{\epsilon}(t^\epsilon,\bar{x})))
 \mu^{\epsilon}(\ud \bar{x})}=0.
\]
Following the same ideas as in the proof of Proposition \ref{a18}, we deduce 
\[
\lim\limits_{\epsilon \to 0}\int\limits_{\|\bar{x}\|\leq R}
\ud_{\mathrm{TV}}(
z^{\epsilon}(s^\epsilon,z^{\epsilon}(t^\epsilon,\bar{x})),x^{\epsilon}(s^\epsilon,x^{\epsilon}(t^\epsilon,\bar{x})))
 \mu^{\epsilon}(\ud \bar{x})=0.
\]
Now, we only need to prove that $\mu^\epsilon(\|x\|>R)$ is negligible when $\epsilon \to 0$. Following the same ideas in \cite{PSXZ} (page $122$, Section $5$, Step $1$), the invariant measure $\mu^\epsilon$ has finite $p$-moments for any $p\geq 0$. Moreover, we have
$\int\limits_{\mathbb{R}^d}{\|x\|^2}\mu^\epsilon(\ud x)\leq \frac{\epsilon d}{\delta}$.
Indeed,  from inequality  \eqref{zeroth} we have
\begin{equation*}
\mathbb{E}\left[\|x^{\epsilon}(t,x)\|^2\right] \leq  \|x\|^2e^{-2\delta t}+\frac{d\epsilon^2}{2\delta}\quad \textrm{ for any } t\geq 0 \textrm{ and } x\in \mathbb{R}^d. 
\end{equation*}
For any two numbers $a,b\in \mathbb{R}$, denote by 
$a\wedge b$ the minimum between $a$ and $b$. Recall that 
\begin{itemize}
\item[i)] If $a\leq b$ then $a \wedge c \leq b \wedge c $ for any $c\in \mathbb{R}$.
\item[ii)] $(a+b) \wedge c \leq a \wedge c+b \wedge c$ for any $a,b,c\geq 0$.
\end{itemize}
Notice that for any $t\geq 0$, $n\in \mathbb{N}$ and $x\in \mathbb{R}^d$ we have
\begin{align*}
\mathbb{E}\left[\|x^{\epsilon}(t,x)\|^2 \wedge n\right]\leq \mathbb{E}\left[\|x^{\epsilon}(t,x)\|^2\right ]\wedge n.
\end{align*}
Then
\begin{align*}
\mathbb{E}\left[\|x^{\epsilon}(t,x)\|^2 \wedge n\right]\leq \left(\|x\|^2e^{-2\delta t}+\frac{d\epsilon^2}{2\delta}\right)\wedge n \leq (\|x\|^2e^{-2\delta t})\wedge n+\frac{d\epsilon^2}{2\delta}\wedge n
\end{align*}
for any $t\geq 0$, $n\in \mathbb{N}$  and  $x\in \mathbb{R}^d$.
Integrating this inequality against 
$\mu^\epsilon(\ud x)$ we obtain
\begin{align*}
\int\limits_{\mathbb{R}^d}\left(\|x\|^2 \wedge n\right) \mu^{\epsilon}(\ud x)\leq 
\int\limits_{\mathbb{R}^d} \left( (\|x\|^2e^{-2\delta t})\wedge n \right)\mu^{\epsilon}(\ud x)+\frac{d\epsilon^2}{2\delta}\wedge n
\end{align*} 
for any $t\geq 0$ and  $n\in \mathbb{N}$. Passing to the limit first as $t\rightarrow \infty$ and using the Dominated Convergence Theorem we have
\begin{align*}
\int\limits_{\mathbb{R}^d}\left(\|x\|^2 \wedge n\right) \mu^{\epsilon}(\ud x)\leq \frac{d\epsilon^2}{2\delta}\wedge n
\end{align*}
for any $n\in \mathbb{N}$. Now, taking $n\rightarrow \infty$ and using the Monotone Convergence Theorem we have
\begin{align*}
\int\limits_{\mathbb{R}^d}\|x\|^2  \mu^{\epsilon}(\ud x)\leq \frac{d\epsilon^2}{2\delta}.
\end{align*}
The latter  together with the Chebyshev inequality imply 
\[\mu^{\epsilon}\left(\|x\|\geq R\right)\leq \frac{d\epsilon^2}{2R^2\delta}\quad  \textrm{ for any } R>0.\]
\end{proof}

\subsubsection{Proof of Theorem \ref{main}}
Now, we are ready to prove Theorem \ref{main}. To stress the fact that Theorem \ref{main} is just a
consequence of what we have proved up to here, we state this as a Lemma.

\begin{lemma}
Assume that 
 \eqref{C2} and   \eqref{C3} hold.
Let $\{x^\epsilon(t,x_0): t \geq 0\}$ be the solution of \eqref{SEDO}
and denote by $\mu^\epsilon$ the unique invariant probability measure for the evolution given by \eqref{SEDO}.
Denote by 
\[
d^{\epsilon}(t)=\ud_{\mathrm{TV}}(x^{\epsilon}(t,x_0),\mu^\epsilon) \quad
\textrm{for any } t\geq 0
\] 
the total variation distance between the law of the random variable $x^{\epsilon}(t,x_0)$ and its invariant probability $\mu^\epsilon$.
Consider the cut-off time $t_{\mathrm{mix}}^\epsilon$  given by \eqref{mix} and the time window given by
\eqref{win}. Let $x_0\not=0$. Then for
any $c\in \mathbb{R}$ we have
\[
\lim\limits_{\epsilon\rightarrow 0}{\Big|d^{\epsilon}(t_{\mathrm{mix}}^\epsilon+cw^\epsilon)-D^{\epsilon}(t_{\mathrm{mix}}^\epsilon+cw^\epsilon)\Big|}=0,
\]
where
\begin{equation*}
D^{\epsilon}(t)=\ud_{\mathrm{TV}}\left(\mathcal{G}\left(\frac{(t-\tau)^{\ell-1}}{e^{\lambda (t-\tau)}\sqrt{\epsilon}}\Sigma^{-\nicefrac{1}{2}}\sum\limits_{k=1}^{m}e^{i\theta_k (t-\tau)}v_k,\mathrm{I}_d \right),\mathcal{G}(0,\mathrm{I}_d) \right)
\end{equation*}
for any $t\geq \tau$
with $m$, $\lambda$, $\ell$, $\tau$, $\theta_1,\ldots, \theta_m $, $v_1,\ldots,v_m$ are the constants and vectors  associated to $x_0$
in {\em{Lemma \ref{asymp}}},
and the matrix $\Sigma$ is the unique solution of the matrix Lyapunov equation:
\begin{equation*}
DF(0)X+X (DF(0))^*=\mathrm{I}_d.
\end{equation*}
\end{lemma}

\begin{proof}
Firstly, from Lemma \ref{ec4} we have that there exists a unique invariant probability measure for the  evolution \eqref{SEDO}. Let call the invariant measure by $\mu^\epsilon$.
From Lemma \ref{goodine} together with Proposition \ref{a18}, Proposition \ref{a19} and Proposition \ref{a20} we deduce
\[
\Big|d^{\epsilon}(t_{\mathrm{mix}}^\epsilon+cw^\epsilon)-\bar{D}^{\epsilon}(t_{\mathrm{mix}}^\epsilon+cw^\epsilon)\Big|=o(1) \quad \textrm{ as } \epsilon \to 0.
\]
From the triangle inequality we obtain
\[
\begin{split}
\Big|d^{\epsilon}(t_{\mathrm{mix}}^\epsilon+&cw^\epsilon)-D^{\epsilon}(t_{\mathrm{mix}}^\epsilon+cw^\epsilon)\Big| \leq \\
&
\Big|{D}^{\epsilon}(t_{\mathrm{mix}}^\epsilon+cw^\epsilon)-\bar{D}^{\epsilon}(t_{\mathrm{mix}}^\epsilon+cw^\epsilon)\Big|+o(1) \quad \textrm{ as } \epsilon \to 0.
\end{split}
\]
The latter together with Proposition \ref{prop2} allows to deduce the statement.
\end{proof}

\appendix
\section{Properties of the Total Variation Distance for Gaussian Distributions}\label{pgdis}
Recall that  $\mathcal{G}{\left(v,\Xi \right)}$ denotes the Gaussian distribution in $\mathbb{R}^d$ with vector mean $v$ and positive definite covariance matrix $\Xi$.
Since the proofs are straightforward, we left most of details to the interested reader.

\begin{lemma}\label{a1}
Let $v,\tilde{v} \in \mathbb{R}^d$ be two fixed vectors and  $\Xi,\tilde{\Xi}$ be two fixed   symmetric positive definite $d\times d$ matrices. Then
\begin{itemize}
\item[i)] For any scalar $c\not=0$ we have
\[
\ud_{\mathrm{TV}}\left(\mathcal{G}{\left(cv,{c}^2\Xi\right)},\mathcal{G}{\left(c\tilde{v},{c}^2\tilde{\Xi}\right)}\right)=
\ud_{\mathrm{TV}}\left(\mathcal{G}{\left(v,\Xi\right)},\mathcal{G}{\left(\tilde{v},\tilde{\Xi}\right)}\right).
\]
\item[ii)]
$
\ud_{\mathrm{TV}}\left(\mathcal{G}{\left(v,\Xi \right)},\mathcal{G}{\left(\tilde{v},\tilde{\Xi} \right)}\right)=
\ud_{\mathrm{TV}}\left(\mathcal{G}{\left(v-\tilde{v},\Xi\right)},\mathcal{G}{\left(0,\tilde{\Xi} \right)}\right).
$
\item[iii)]
$
\ud_{\mathrm{TV}}\left(\mathcal{G}{\left(v,\Xi \right)},\mathcal{G}{\left(\tilde{v},{\Xi} \right)}\right)=
\ud_{\mathrm{TV}}\left(\mathcal{G}{\left(\Xi^{-\nicefrac{1}{2}}v,\mathrm{I}_d\right)},\mathcal{G}{\left(\Xi^{-\nicefrac{1}{2}}\tilde{v},\mathrm{I}_d \right)}\right).
$
\item[iv)]
$
\ud_{\mathrm{TV}}\left(\mathcal{G}{\left(0,{\Xi} \right)},\mathcal{G}{\left(0,\tilde{\Xi}\right)}\right)=
\ud_{\mathrm{TV}}\left(\mathcal{G}{\left(0,\tilde{\Xi}^{-\nicefrac{1}{2}}\Xi\tilde{\Xi}^{-\nicefrac{1}{2}} \right)},\mathcal{G}{\left(0,\mathrm{I}_d\right)}\right).
$
\end{itemize}
\end{lemma}
\begin{proof} 
The proofs follow  from the characterisation of the total variation distance between two probability measures with
densities, i.e.,
\[
\ud_{\mathrm{TV}}(\mathbb{P}_1,\mathbb{P}_2)=\frac{1}{2}\int\limits_{\mathbb{R}^d}\left|f_{1}(x)-f_{2}(x) \right|\ud x,
\]
where $f_1$ and $f_2$ are the densities of $\mathbb{P}_1$ and $\mathbb{P}_2$, respectively, and using the Change of Variable Theorem.
\end{proof}

\begin{lemma}\label{a2}
For any $v\in \mathbb{R}^d$ we have
\[
\ud_{\mathrm{TV}}\left(\mathcal{G}{\left(v,\mathrm{I}_d \right)},\mathcal{G}{\left(0,\mathrm{I}_d\right)}\right)=
\sqrt{\frac{2}{{\pi}}}\int\limits_{0}^{\nicefrac{\|v\|}{2}}e^{-\frac{x^2}{2}}\ud x\leq  \frac{1}{\sqrt{2\pi}}\|v\|.
\]
\end{lemma}
\begin{proof} 
The proof in dimension one is a straightforward computation. 
We left the details to the interested reader.
For dimension bigger than one, the idea is to reduce the proof to dimension one. To do that, we use the following fact:
for any $v,\tilde{v}\in \mathbb{R}^d$ such that $\|v\|=\|\tilde{v}\|$ there exists an orthogonal matrix $A$ such that
$\tilde{v}=A(v)$.
Recall  that the law of $\mathcal{G}{\left(0,\mathrm{I}_d\right)}$ is invariant under orthogonal transformations, i.e., 
$O\mathcal{G}{\left(0,\mathrm{I}_d\right)}=\mathcal{G}{\left(0,\mathrm{I}_d\right)}$ for any orthogonal matrix $O$.
Then for any $v,\tilde{v}\in \mathbb{R}^d$ we have
\[
\begin{split}
\ud_{\mathrm{TV}}\left(\mathcal{G}{\left(\tilde{v},\mathrm{I}_d \right)},\mathcal{G}{\left(0,\mathrm{I}_d\right)}\right)
&=
\ud_{\mathrm{TV}}\left(\mathcal{G}{\left(A{v},\mathrm{I}_d \right)},\mathcal{G}{\left(0,\mathrm{I}_d\right)}\right)
=
\ud_{\mathrm{TV}}\left(\mathcal{G}{\left(A{v},\mathrm{I}_d \right)},A\mathcal{G}{\left(0,\mathrm{I}_d\right)}\right)\\
&=\ud_{\mathrm{TV}}\left(A\mathcal{G}{\left({v},\mathrm{I}_d \right)},A\mathcal{G}{\left(0,\mathrm{I}_d\right)}\right)
=\ud_{\mathrm{TV}}\left(\mathcal{G}{\left({v},\mathrm{I}_d \right)},\mathcal{G}{\left(0,\mathrm{I}_d\right)}\right),
\end{split}
\]
where the last equality follows 
from the characterisation of the total variation distance between two probability measures with
densities and the Change of Variable Theorem.
The latter allows us to reduce the proof to dimension one by observing that the vectors $v$ and $(\|v\|,0,\ldots,0)^{*}\in \mathbb{R}^d$ have the same norm and  the statement follows from a straightforward computation. 
\end{proof}

\begin{lemma}\label{a4}
Let $\{v_{\epsilon}:\epsilon>0\}\subset \mathbb{R}^d$ such that $\lim\limits_{\epsilon\rightarrow 0}{v_{\epsilon}}=v\in \mathbb{R}^d$. Then
\[
\lim\limits_{\epsilon\rightarrow 0}
\ud_{\mathrm{TV}}\left(\mathcal{G}{\left(v_{\epsilon},\mathrm{I}_d \right)},\mathcal{G}{\left(0,\mathrm{I}_d\right)}\right)=
\ud_{\mathrm{TV}}\left(\mathcal{G}{\left(v,\mathrm{I}_d\right)},\mathcal{G}{\left(0,\mathrm{I}_d\right)}\right).
\]
\end{lemma}
\begin{proof}
The idea of the proof follows from Lemma \ref{a2} together with the Dominated Convergence Theorem.
\end{proof}

\begin{lemma}\label{a5}
Let $\{v_{\epsilon}:{\epsilon>0}\}\subset \mathbb{R}^d$ such that $\lim\limits_{\epsilon\rightarrow 0}{\|v_{\epsilon}\|}=+\infty$. Then
\[
\lim\limits_{\epsilon\rightarrow 0}\ud_{\mathrm{TV}}\left(\mathcal{G}{\left(v_{\epsilon},\mathrm{I}_d \right)},\mathcal{G}{\left(0,\mathrm{I}_d\right)}\right)=1.
\]
\end{lemma}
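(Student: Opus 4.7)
The plan is to lower-bound the total variation distance by an explicit Neyman--Pearson test, and to combine this with the trivial upper bound of $1$. First, I would set $e_\epsilon := \mu_\epsilon/\|\mu_\epsilon\|$ (well defined for $\epsilon$ small enough so that $\mu_\epsilon \neq 0$) and introduce the half-space
\begin{eqnarray*}
A_\epsilon &:=& \bigl\{x \in \R^m :\ \langle x, e_\epsilon\rangle > \tfrac{1}{2}\|\mu_\epsilon\| \bigr\},
\end{eqnarray*}
which is the affine region whose boundary is the perpendicular bisector of the segment joining $0$ and $\mu_\epsilon$.

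Next, I would compute the masses of $A_\epsilon$ under the two Gaussians. By the rotational invariance of the standard Gaussian, for any unit vector $e$ the projection $\langle X, e\rangle$ has law $\mathcal{N}(0,1)$ when $X\sim \mathcal{G}(0,I_m)$ and law $\mathcal{N}(\langle \mu, e\rangle,1)$ when $X\sim \mathcal{G}(\mu,I_m)$. Writing $\Phi$ for the cdf of $\mathcal{N}(0,1)$, this yields
\begin{eqnarray*}
\mathcal{G}(\mu_\epsilon,I_m)(A_\epsilon) &=& \Phi\bigl(\tfrac{1}{2}\|\mu_\epsilon\|\bigr),\\
\mathcal{G}(0,I_m)(A_\epsilon) &=& 1 - \Phi\bigl(\tfrac{1}{2}\|\mu_\epsilon\|\bigr),
\end{eqnarray*}
and hence, from the supremum definition of the total variation distance,
\begin{eqnarray*}
\norm{\mathcal{G}(\mu_\epsilon,I_m) - \mathcal{G}(0,I_m)} &\geq& 2\Phi\bigl(\tfrac{1}{2}\|\mu_\epsilon\|\bigr) - 1.
\end{eqnarray*}

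Finally, since $\|\mu_\epsilon\| \to +\infty$ by hypothesis and $\Phi(t) \to 1$ as $t \to +\infty$, the right-hand side tends to $1$. Combined with the trivial upper bound $\norm{\mathcal{G}(\mu_\epsilon,I_m) - \mathcal{G}(0,I_m)} \leq 1$, the squeeze theorem gives the claim. There is no serious obstacle here; the only detail worth one line is the rotational-invariance statement used to identify the law of $\langle X, e_\epsilon\rangle$, which follows at once from the fact that $U^{*}X \sim \mathcal{G}(U^{*}\mu, I_m)$ for any orthogonal matrix $U$, applied with any $U$ whose first column is $e_\epsilon$.
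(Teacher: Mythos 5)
Your proof is correct, and it takes a genuinely different route from the paper's. The paper works directly with the $L^1$ representation of the total variation distance: it writes $\norm{\mathcal{G}(\mu_\epsilon,I_m)-\mathcal{G}(0,I_m)}$ as $\tfrac{1}{2}\int_{\R^m}|f(x-\mu_\epsilon)-f(x)|\,dx$ with $f(x)=(2\pi)^{-m/2}e^{-x^*x/2}$, and then proves the more general statement that $\int|f(x-\mu_\epsilon)-f(x)|\,dx \to 2\int|f|$ whenever $f\in L^1$ and $\|\mu_\epsilon\|\to\infty$, by fixing $\eta>0$, choosing a ball $B(0,r)$ capturing all but $\eta/4$ of the mass, and noting that $B(0,r)$ and $B(\mu_\epsilon,r)$ are eventually disjoint. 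Your argument instead exploits the supremum-over-sets definition of total variation: you exhibit an explicit Neyman--Pearson test region (the half-space past the perpendicular bisector of $0$ and $\mu_\epsilon$), use rotational invariance to reduce to one-dimensional normal cdfs, and squeeze against the trivial upper bound of $1$. The paper's argument has the advantage of not being specific to Gaussians (it shows translation to infinity kills overlap for any integrable density), but your half-space argument is shorter, fully explicit, and gives the quantitative lower bound $2\Phi(\tfrac{1}{2}\|\mu_\epsilon\|)-1$, which is in fact the exact value of the total variation distance between two standard Gaussians a distance $\|\mu_\epsilon\|$ apart.
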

\begin{proof}
The idea of the proof follows from Lemma \ref{a2} together with the Dominated Convergence Theorem.
\end{proof}

\begin{lemma}\label{a6}
Let $\mathcal{S}_d$ denotes the set of $d\times d$ symmetric and positive definite matrices.
Let $\{\Xi_{\epsilon}:{\epsilon>0}\}\subset \mathcal{S}_d$ such that $\lim\limits_{\epsilon\rightarrow 0}{\Xi_{\epsilon}}=\Xi\in \mathcal{S}_d$. Then
\[
\lim\limits_{\epsilon\rightarrow 0}\ud_{\mathrm{TV}}\left(\mathcal{G}{\left(0,\Xi_{\epsilon} \right)},\mathcal{G}{\left(0,\Xi\right)}\right)=0.
\]
\end{lemma}
\begin{proof}
The proof follows from 
the characterisation of the total variation distance between two probability measures with
densities together with the Scheff\'e Lemma.
\end{proof}

For $m\in \mathbb{R}$, $\mathcal{N}{\left(m,1 \right)}$ denotes the Gaussian distribution on $\mathbb{R}$ with mean $m$ and unit variance .
\begin{lemma}\label{lusc} 
Let $\{v_t:t\geq 0\}\subset \mathbb{R}^d$. 
\begin{itemize}
\item[i)]
If $\limsup\limits_{t\rightarrow +\infty}\|v_t\|\leq C_0\in [0,+\infty)$ then 
\[
\limsup\limits_{t\rightarrow +\infty}\ud_{\mathrm{TV}}(\mathcal{G}(v_t,\mathrm{I}_d),\mathcal{G}(0,\mathrm{I}_d))\leq \ud_{\mathrm{TV}}(\mathcal{N}(C_0,1),\mathcal{N}(0,1)).
\]
\item[ii)] If $\liminf\limits_{t\rightarrow +\infty}\|v_t\|\geq  C_1\in [0,+\infty)$ then 
\[
\liminf\limits_{t\rightarrow +\infty}\ud_{\mathrm{TV}}(\mathcal{G}(v_t,\mathrm{I}_d),\mathcal{G}(0,\mathrm{I}_d))\geq  \ud_{\mathrm{TV}}(\mathcal{N}(C_1,1),\mathcal{N}(0,1)).
\]
\end{itemize}
\end{lemma}
\begin{proof}
From Lemma \ref{a2} we deduce 
\[
\ud_{\mathrm{TV}}(\mathcal{G}(v_t,\mathrm{I}_d),\mathcal{G}(0,\mathrm{I}_d))=\ud_{\mathrm{TV}}(\mathcal{N}(\|v_t\|,1),\mathcal{N}(0,1))=
\sqrt{\frac{2}{{\pi}}}\int\limits_{0}^{\nicefrac{\|v_t\|}{2}}e^{-\frac{x^2}{2}}\ud x
\]
which allows to reduce the proof for $d=1$. The proof proceeds from the following straightforward argument: after passing a subsequence, we use the continuity of the total variation distance (Lemma \ref{a4} and Lemma \ref{a6}) and the monotonicity property:
\[\ud_{\mathrm{TV}} (\N(m_1,1),\N(0,1))\leq
\ud_{\mathrm{TV}} (\N(m_2,1),\N(0,1))
\]
for any $0\leq |m_1|\leq |m_2|<+\infty$ in order to deduce item i) and item ii) of the statement.
\end{proof}

\section{The deterministic dynamical system}
\label{B.tomate}
In this section we present a proof of Lemma \ref{asymp}. We start analysing the linear differential equation associated to the linearisation of the non-linear deterministic differential equation (\ref{EDO}) around the hyperbolic fixed point $0$.

\begin{lemma}
\label{asy}
Assume that 
 \eqref{C1}  holds.
Then for any $x_0\in \mathbb{R}^{d}\setminus\{0\}$  there exist $\lambda:=\lambda(x_0)>0$, $\ell:=\ell(x_0),\; m:=m(x_0) \in \bb \{1,\ldots,d\}$, $\theta_1:=\theta_1(x_0),\dots,\theta_m:=\theta_m(x_0) \in [0,2\pi)$ and $v_1:=v_1(x_0),\dots,v_m:=v_m(x_0)$ in $\bb C^d$ linearly independent such that
\[
\lim_{t \to +\infty} \Big\| \frac{e^{\lambda t}}{t^{\ell-1}} e^{-DF(0)t}x_0 - \sum_{k=1}^m e^{i\theta_k t} v_k \Big\|=0.
\]
\end{lemma}

\begin{proof}
Write $\Lambda=DF(0)$  and let $t\geq 0$. 
We will use the Putzer spectral method to ``compute" $e^{-\Lambda t}x_0$.
By \eqref{C1}, all eigenvalues of $\Lambda$ have positive real part. Denote by $\{\phi(t,x): t \geq 0\}$ the solution of the linear system:
\[
\left\{
\begin{array}{r@{\;=\;}l}
\tfrac{\ud}{\ud t} \phi(t) & -\Lambda \phi(t)\quad \textrm{ for } t\geq 0,\\
\phi(0) & x.
\end{array}
\right.
\]
Let $(w_{j,k}: j=1,\dots,N; k=1,\dots, N_j)$ be a Jordan basis of $-\Lambda$, that is,
\[
-\Lambda w_{j,k} = -\lambda_j w_{j,k} + w_{j,k+1}
\]
for any $j=1,\dots,N; k=1,\dots, N_j$.
In this formula we use the convention $w_{j,N_j+1} =0$. Since
$(w_{j,k}: j=1,\dots,N; k=1,\dots, N_j)$ is a basis of $\mathbb{R}^d$,
the decomposition
\[
\phi(t,x) = \sum_{j=1}^{N}\sum_{k=1}^{N_j} \phi_{j,k}(t,x) w_{j,k}
\]
defines the functions $\phi_{j,k}(t,x)$ in a unique way. Then
\[
\sum_{j=1}^{N}\sum_{k=1}^{N_j} \tfrac{\ud}{\ud t}\phi_{j,k}(t,x) w_{j,k} = \sum_{j=1}^{N}\sum_{k=1}^{N_j} \phi_{j,k}(t,x) \big(-\lambda_j w_{j,k} +w_{j,k+1}\big),
\]
and the aforementioned uniqueness implies 
\begin{equation*}
\label{comino}
\tfrac{\ud}{\ud t} \phi_{j,k}(t,x) = -\lambda_j \phi_{j,k}(x,t) + \phi_{j,k-1}(t,x)
\end{equation*}
for any $j=1,\dots,N; k=1,\dots, N_j$,
where we use the convention $\phi_{j,0}(t,x) =0$. In addition, we have that $\phi_{j,k}(0,x) = x_{j,k}$, where
\[
x = \sum_{j=1}^{N}\sum_{k=1}^{N_j}  x_{j,k} w_{j,k}.
\]
For each $j$, the system of equations for $\{\phi_{j,k}(t,x): k=1,\dots,N_j\}$ is autonomous, as well as the equation for $\phi_{j,1}(t,x)$. Notice that
\[
\phi_{j,1}(t,x) = x_{j,1}e^{-\lambda_j t}
\]
and by the method of variation of parameters, for $k=2,\dots,N_j$ we have 
\[
\phi_{j,k}(t,x) = x_{j,k}e^{-\lambda_j t} + \int_0^t e^{-\lambda_j (t-s)} \phi_{j,k-1}(s,x) \ud s.
\]
Applying this formula for $k=2$ we see 
\[
\phi_{j,2}(t,x) = x_{j,2} e^{-\lambda_j t} + x_{j,1} t e^{-\lambda_j t}
\]
and from this expression we can guess and check the formula
\[
\phi_{j,k}(t,x) = \sum_{i=1}^k x_{j,i}  \frac{t^{k-i} e^{-\lambda_j t}}{(k-i)!}.
\]
We conclude that
\begin{equation}
\label{calabaza}
\phi(t,x)  = \sum_{j=1}^{\vphantom{N_j}N} \sum_{k=1}^{N_j} \sum_{i=1}^{\vphantom{N_j} k}  \frac{t^{k-i} e^{-\lambda_j t}}{(k-i)!} x_{j,i} w_{j,k}.
\end{equation}
With this expression in hand, we are ready to prove Lemma \ref{asy}. Let $x_0 \in \bb R^d$ be fixed.
Assume that $x_0 \neq 0$ and write
\[
x_0 =\sum_{j=1}^{N}\sum_{k=1}^{N_j}  x_{j,k}^0 w_{j,k}.
\]
Take
\[
\lambda = \min\{ \mathrm{Re}(\lambda_j): x^0_{j,k} \neq 0 \text{ for some } k\}
\]
and define
\[
J_0 = \{j : \mathrm{Re}(\lambda_j) = \lambda \text{ and } x_{j,k}^0 \neq 0 \text{ for some } k\}.
\]
In other words, we identify in \eqref{calabaza} the smallest exponential rate of decay and we collect in $J_0$ all the indices with that exponential decay.
Now,  define
\[
\ell = \max\{ N_j-k: j \in J_0 \text{ and } x_{j,k}^0 \neq 0\}
\]
and
\[
J = \{j \in J_0: x_{j,N_j-\ell}^0 \neq 0\}.
\]
We see that for $j \in J$,
\[
\lim_{t \to \infty} \big|\phi_{j,N_j}(t,x_0)\big|\frac{e^{\lambda t}}{t^\ell} = \frac{\big|x_{j,N_j-\ell}\big|}{\ell !},
\]
while for $j \notin J$ and $k$ arbitrary or $j \in J$ and $k \neq N_j$,
\[
\lim_{t \to \infty} \big|\phi_{j,k}(t,x_0)\big|\frac{e^{\lambda t}}{t^\ell} = 0.
\]
Therefore,
\[
\lim_{t \to \infty} \Big\| \frac{e^{\lambda t}}{t^\ell} \phi(t,x_0) - \sum_{j \in J} \frac{e^{-(\lambda_j-\lambda)t} }{\ell !} x_{j,N_j-\ell} w_{j,N_j}\Big\| = 0.
\]
Let $m =\# J$ and let $\sigma: \{1,\dots,m\} \to J$ be a numbering of $J$.
By definition of $\lambda$ and $J$, the numbers $\lambda_j-\lambda$ are imaginary. Therefore, Lemma \ref{asy} is proved choosing $\theta_k = i(\lambda_{\sigma_k}-\lambda)$ and $v_k = \frac{x_{\sigma_k,N_{\sigma_k}-\ell} w_{\sigma_k,N_{\sigma_k}}}{\ell !}$.
\end{proof}

Now, we are ready to prove Lemma \ref{asymp}. The proof is based in the Hartman-Grobman Theorem
(see Theorem(Hartman) page $127$ of \cite{PE} or the celebrated paper of P.~Hartman \cite{HA})
that guarantees that the conjugation around the hyperbolic fixed point $0$ of (\ref{EDO}) is $\mathcal{C}^1$-local diffeomorphism under some resonance conditions which are fulfilled when all the eigenvalues of the matrix $DF(0)$ have negative (or positive) real part.

\begin{lemma}
\label{asympgeral}
Assume that \eqref{C1} holds. Then
for any $x_0\in \mathbb{R}^{d}\setminus\{0\}$  there exist $\lambda:=\lambda(x_0)>0$, $\ell:=\ell(x_0), m:=m(x_0) \in \bb N$, $\theta_1:=\theta_1(x_0),\dots,\theta_m:=\theta_m(x_0) \in [0,2\pi)$, $v_1:=v_1(x_0),\dots,v_m:=v_m(x_0)$ in $\bb C^d$ linearly independent and $\tau:=\tau(x_0)>0$ such that
\[
\lim_{t \to +\infty} \Big\| \frac{e^{\lambda t}}{t^{\ell-1}} \varphi(t+\tau,x_0) - \sum_{k=1}^m e^{i\theta_k t} v_k \Big\|=0.
\]
\end{lemma}

\begin{proof}
Since all the eigenvalues of $DF(0)$ have real positive real part, there exist open sets $U,V$ around the hyperbolic fixed  point zero and $h:U\rightarrow V$ a $C^1{(U,V)}$ homeomorphism such that $h(0)=0$ and $h^{}(x)=x+o(\|x\|)$ as $\|x\|\rightarrow 0$ such that
$\varphi(t,x)=h^{-1}(e^{-DF(0)t}h(x))$  for any  $t\geq 0$ and $x\in U$.
From \eqref{C1} we obtain 
\[
\|\varphi(t,x)\| \leq \|x\| e^{-\delta t}\quad \text{ for any } x \in \bb R^d \text{ and any } t \geq 0.
\]
Observe that there exists $\tau:=\tau(x_0)>0$ such that $\varphi(t,x_0)\in U$ for any $t\geq \tau$. Then
\[
\varphi(t+\tau,x_0)=\varphi(t,x_\tau)=h^{-1}(e^{-DF(0)t}h(x_\tau))\quad \textrm{ for any }t\geq 0.\]

Let $\tilde{x}:=h(x_\tau)$. By Lemma \ref{asy}
there exist $\lambda(\tilde{x}):=\lambda >0$, $\ell(\tilde{x}):=\ell, m(\tilde{x}):=m \in \bb N$, $\theta_1(\tilde{x}):=\theta_1,\dots,\theta_m(\tilde{x}):=\theta_m \in [0,2\pi)$ and $v_1(\tilde{x}):=v_1,\dots,v_m(\tilde{x}):=v_m$ in $\bb C^d$ linearly independent such that
\[
\lim_{t \to +\infty} \Big\| \frac{e^{\lambda t}}{t^{\ell-1}} e^{-DF(0)t}\tilde{x} - \sum_{k=1}^m e^{i\theta_k t} v_k \Big\|=0.
\]
From the triangle inequality we obtain
\begin{equation}\label{lola}
\begin{split}
\Big\| \frac{e^{\lambda t}}{t^{\ell-1}} \varphi(t+\tau,x_0) - \sum_{k=1}^m e^{i\theta_k t} v_k \Big\|  \leq &
\Big\|\frac{e^{\lambda t}}{t^{\ell-1}} \varphi(t+\tau,x_0)- \frac{e^{\lambda t}}{t^{\ell-1}} e^{-DF(0)t}\tilde{x}\Big\|\\
&  +\Big\| \frac{e^{\lambda t}}{t^{\ell-1}} e^{-DF(0)t}\tilde{x} - \sum_{k=1}^m e^{i\theta_k t} v_k \Big\|.\end{split}
\end{equation}
Observe that
\[
\begin{split}
&\frac{e^{\lambda t}}{t^{\ell-1}}\Big\| \varphi(t+\tau,x_0)- e^{-DF(0)t}\tilde{x}\Big\|
=
\frac{e^{\lambda t}}{t^{\ell-1}}\Big\| h^{-1}(e^{-DF(0)t}h(x_\tau))- e^{-DF(0)t}\tilde{x}\Big\|\\
&\hspace{3cm}=
\frac{e^{\lambda t}}{t^{\ell-1}} o\left(\Big\|e^{-DF(0)t}\tilde{x}\Big\|\right)=\frac{e^{\lambda t}\Big\|e^{-DF(0)t}\tilde{x}\Big\|}{t^{\ell-1}} o(1)\\
&\hspace{3cm}\leq
\Big\| \frac{e^{\lambda t}}{t^{\ell-1}} e^{-DF(0)t}\tilde{x} - \sum_{k=1}^m e^{i\theta_k t} v_k \Big\| o(1)+
\left( \sum_{k=1}^m \|v_k\|\right)o(1),
\end{split}
\]
where $o(1)$ goes to zero as $t$ goes by.
The latter together with inequality \eqref{lola} and Lemma \ref{asy} allows us to deduce 
\begin{align*}
\lim\limits_{t\rightarrow +\infty}\Big\| \frac{e^{\lambda t}}{t^{\ell-1}} \varphi(t+\tau,x_0) - \sum_{k=1}^m e^{i\theta_k t} v_k \Big\|=0.
\end{align*}
\end{proof}

\begin{lemma}\label{pepino} 
Assume that \eqref{C1} holds. Let $\delta_\epsilon=o(1)$. Then
\[
\lim\limits_{\epsilon\rightarrow 0}{\frac{\delta_\epsilon\|\varphi(t^\epsilon_{\mathrm{mix}}+\delta_\epsilon+cw^{\epsilon},x_0)\|^2}{{\epsilon}}}=0 \quad \textrm{ for any } c\in \mathbb{R}.
\]
\end{lemma}

\begin{proof}
Remember that
\[
t_{\mathrm{mix}}^\epsilon = \frac{1}{2\lambda} \ln \left(1/\epsilon\right) + \frac{\ell-1}{\lambda} \ln\left(
\ln \left(1/\epsilon\right)\right)+\tau,
\]
and 
\[
w^\epsilon=\frac{1}{\lambda}+o(1),
\]
where $\lambda$, $\ell$ and $\tau$ are the constants associated to $x_0$ in Lemma \ref{asymp} and 
$o(1)$ goes to zero as $\epsilon \to 0$.
Define
$
t^\epsilon := t_{\mathrm{mix}}^\epsilon -\tau+\delta_\epsilon+cw^\epsilon.
$
Note
\begin{align*}
\frac{1}{\sqrt{\epsilon}}\Big\|\varphi(t^\epsilon+\tau,x_0)\Big\|\leq &
\frac{(t^\epsilon)^{\ell-1}}{e^{\lambda t^\epsilon}\sqrt{\epsilon}}\Big\|\frac{e^{\lambda t^\epsilon}}{(t^\epsilon)^{\ell-1}} \varphi(t^\epsilon+\tau,x_0)- \sum_{k=1}^m e^{i\theta_k t_\epsilon} v_k \Big\|+\\
&\frac{(t^\epsilon)^{\ell-1}}{e^{\lambda t^\epsilon}\sqrt{\epsilon}}\sum_{k=1}^m \|v_k\|.
\end{align*}
From the last inequality, using the fact that $\lim\limits_{\epsilon\rightarrow 0}\frac{(t^\epsilon)^{\ell-1}}{e^{\lambda t^\epsilon}\sqrt{\epsilon}}=\frac{e^{-c}}{(2\lambda)^{\ell-1}}$ and Lemma
\ref{asympgeral} we deduce the  desired result.
\end{proof}
\section{The stochastic dynamical system}\label{apend3}

In this Appendix we analyse the zeroth and first order approximations for the It\^o diffusion $\{x^{\epsilon}(t):t\geq 0\}$. Recall that 
$\{y(t):t\geq 0\}$ is the solution of the stochastic differential equation \eqref{dc},
and $\delta>0$ is the constant that appears in \eqref{C2}.

\begin{lemma}\label{uap}
Assume that \eqref{C2} holds.
For any $\eta>0$ and $t\in \left[0, \frac{\eta^2}{\epsilon d}\right)$ we have
\begin{align*}
\mathbb{P}\left(\sup\limits_{0\leq s\leq t}\|x^{\epsilon}(t)-\varphi(t)\|\geq \eta\right)\leq \frac{2d\epsilon^2 t}{\delta\left(\eta^2-\epsilon dt\right)^2} 
\end{align*}
and 
\begin{align*}
\mathbb{P}\left(\sup\limits_{0\leq s\leq t}\|\sqrt{\epsilon}y^{}(t)\|\geq \eta\right)\leq \frac{2d\epsilon^2 t}{\delta\left(\eta^2-\epsilon dt\right)^2}. 
\end{align*}
\end{lemma}

\begin{proof}
Let $\epsilon>0$ and $t\geq 0$ be fixed.
From \eqref{pin} we have
\begin{align}\label{pin5}
\ud \|x^\epsilon(t) -\varphi(t)\|^2 \leq & 
-2\delta \|x^{\epsilon}(t)-\varphi(t)\|^2\ud t+
2\sqrt \epsilon \<(x^\epsilon(t) -\varphi(t)), \ud B(t)\>\nonumber\\
&+ d \epsilon \ud t. 
\end{align}
Let $M^\epsilon(t):=2\sqrt{\epsilon}\left(x^{\epsilon}(t)-\varphi(t)\right)^{*}$ for every $t\geq 0$. Notice that 
\[
\left\{N^\epsilon(t):=\int_{0}^{t}M^{\epsilon} 
(s) \ud B(s): t\geq 0\right\}\quad \textrm{ is a local martingale. }
\]  
Then, there exists a sequence of increasing stopping times $\{\tau^{\epsilon}_n\}_{n\in \mathbb{N}}$ such that almost surely $\tau^{\epsilon}_n\uparrow \infty$ as $n$ goes to infinity and for each $n\in \mathbb{N}$, 
\[
\left\{N^{\epsilon,n}(t)=N^{\epsilon}\left({\min\{\tau^{\epsilon}_n,t\}} \right):t\geq 0\right\}
\quad \textrm{ is a true martingale.
}\] 
Taking expectation on \eqref{pin5} and using the fact that $\{N^{\epsilon,n}(t):t\geq 0\}$ is a zero-mean martingale, we deduce 
\begin{equation*}
\mathbb{E}\left[\|x^{\epsilon}\left(\min\{\tau^{\epsilon}_n,t\}\right)-\varphi\left({\min\{\tau^{\epsilon}_n,t\}}\right)\|^2\right]\leq 
\epsilon d{\min\{\tau^{\epsilon}_n,t\}}
\leq  \epsilon dt
\end{equation*}
for every $t\geq 0$. Consequently, by the well--known Fatou Lemma we obtain
\begin{equation*}
\mathbb{E}\left[\|x^{\epsilon}(t)-\varphi(t)\|^2\right]\leq 
\epsilon dt \quad \textrm{ for any } t\geq 0.
\end{equation*}
The latter implies
\[
\left\{N^\epsilon(t)=\int_{0}^{t}M^{\epsilon} 
(s) \ud B(s): t\geq 0\right\} \quad \textrm{ is a  true martingale}.
\]
From inequality \eqref{pin5} we have
\[
\|x^{\epsilon}(t)-\varphi(t)\|^2
\leq \epsilon dt+N^{\epsilon}(t)\quad \textrm{ for any } t\geq 0.
\]
For any $\eta>0$ and $0\leq t<\nicefrac{\eta^2}{(\epsilon d)}$ we have
\[
\mathbb{P}\left(\sup\limits_{0\leq s\leq t}\|x^{\epsilon}(s)-\varphi(s)\|^2\geq \eta^2\right)\leq\mathbb{P}\left(\sup\limits_{0\leq s\leq t}\|N^{\epsilon}(s)\|\geq \eta^2-\epsilon dt\right).
\]
From the Doob inequality for submartingales we obtain
\begin{align*}
\mathbb{P}\left(\sup\limits_{0\leq s\leq t}\|N^{\epsilon}(s)\|\geq \eta^2-\epsilon dt\right)\leq \frac{\mathbb{E}\left[\|N^{\epsilon}(t)\|^2\right]}{(\eta^2-\epsilon dt)^2}.
\end{align*}
The It\^o isometry allows us to deduce that
\[
\mathbb{E}\left[\|N^{\epsilon}(t)\|^2\right]=4\epsilon\
\int_{0}^{t}\mathbb{E}\left[\|x^{\epsilon}(s)-\varphi(s)\|^2\right]\ud s.
\]
From inequality \eqref{zeroth} we obtain
$
\mathbb{E}\left[\|N^{\epsilon}(t)\|^2\right]\leq \nicefrac{2d\epsilon^2 t}{\delta}
$.
Therefore
\begin{align*}
\mathbb{P}\left(\sup\limits_{0\leq s\leq t}\|x^{\epsilon}(s)-\varphi(s)\|\geq \eta\right)\leq \frac{2d\epsilon^2 t}{\delta(\eta^2-\epsilon dt)^2}
\end{align*}
for $0\leq t<\nicefrac{\eta^2}{(\epsilon d)}$. The proof for the second part proceeds from the same ideas as the first part. We left the details to the interested reader.
\end{proof}

\begin{proposition}\label{a16}
Assume that \eqref{C2} holds.
For $t\geq 0$, write $W(t):=\sup\limits_{0\leq s\leq t}{\|B(s)\|}$.
For any $t\geq 0$, the following holds true:
\begin{itemize}
\item[i)]
$\bb E \left[\|x^\epsilon(t) -\varphi(t)\|^2 \right]
\leq \frac{d \epsilon}{2\delta}$\;
and\;
$\bb E \left[\|y(t)\|^2 \right] 
\leq \frac{d }{2\delta}$.
\item[ii)]  For each $n\in \mathbb{N}$,
define $c_n:=\prod\limits_{j=0}^{n-1}{\left(d+2j\right)}$.
Then
\[
\mathbb{E}\left[ \left\|x^{\epsilon}(t)-\varphi(t)\right\|^{2n} \right]\leq \frac{c_n\epsilon^n}{2^n\delta^n}\quad \textrm{ and }\quad
\mathbb{E}\left[ \left\|y(t)\right\|^{2n} \right]\leq \frac{c_n}{2^n\delta^n}.
\]
\item[iii)] 
For any $0\leq r<\delta$ we have 
\[
\mathbb{E}\left[  \exp\left({r\frac{\left\|x^{\epsilon}(t)-\varphi(t)\right\|^2}{\epsilon}}\right) \right]<+\infty \quad
 \textrm{ and } \quad
 \mathbb{E}\left[  \exp\left({r{\left\|y(t)\right\|^2}}\right) \right]<+\infty.
\]
\item[iv)] 
Let $\delta_\epsilon\in (0,\nicefrac{\delta}{2}]$. Then 
\[
\mathbb{E}\left[  \exp\left({\delta_{\epsilon}\frac{\left\|x^{\epsilon}(t)-\varphi(t)\right\|^2}{\epsilon}} \right)\right]\leq
\exp\left(d{\delta_{\epsilon}t}\right)
\]
and
\[
\mathbb{E}\left[  \exp\left({\delta_{\epsilon}\|y(t)\|^2} \right)\right]\leq
\exp\left(d{\delta_{\epsilon}t}\right).
\]
\end{itemize}
\end{proposition}

\begin{proof}~
\begin{itemize}
\item[i)] The first part follows from inequality \eqref{zeroth}.
The second part follows exactly as  inequality \eqref{zeroth}. We left the details to the interested reader.
\item[ii)] 
We provide the proof for the first part.
The second part proceeds exactly as the first part and we left the details to the interested reader.

Let $\epsilon>0$ and $t\geq 0$ be fixed. Notice that
\[
\begin{split}
&x^{\epsilon}(t)-\varphi(t) =-\int\limits_{0}^{t}{\left[F(x^{\epsilon}(s))-F(\varphi(s))\right]\ud s}+\sqrt{\epsilon}B(t)\\
&\hspace{0.5cm}= -\int\limits_{0}^{t}{\left[\int\limits_{0}^{1}{DF{(\varphi(s)+\theta\left(x^{\epsilon}(s)-\varphi(s)\right))}\ud \theta}\right]
\left(x^{\epsilon}(s)-\varphi(s)\right)\ud s}+
\sqrt{\epsilon}B(t)\\
&\hspace{0.5cm}=-\int\limits_{0}^{t}{A^{\epsilon}(s)
\left(x^{\epsilon}(s)-\varphi(s)\right)\ud s}+\sqrt{\epsilon}B(t),
\end{split}
\]
where $A^{\epsilon}(s):=\int\limits_{0}^{1}{DF{(\varphi(s)+\theta\left(x^{\epsilon}(s)-\varphi(s)\right))}\ud \theta}$.
We will use the induction method.
The induction basis had already  proved in item i) of this proposition.
Consider $f_{n+1}(x)=\|x\|^{2(n+1)}$, $x\in \mathbb{R}^d$.
By the It\^o formula, it follows that
\begin{align*}
&\ud \|x^{\epsilon}(t)-\varphi(t)\|^{2(n+1)}=\\
&\hspace{1cm}\-2(n+1)\|x^{\epsilon}(t)-\varphi(t)\|^{2n}
\<x^{\epsilon}(t)-\varphi(t),A^{\epsilon}(t)\left(x^{\epsilon}(t)-\varphi(t)\right)\>\ud t\\
&\hspace{1cm} +\epsilon (d+2n)(n+1)\|x^{\epsilon}(t)-\varphi(t)\|^{2n}\ud t\\
&\hspace{1cm}+2(n+1)\sqrt{\epsilon}\|x^{\epsilon}(t)-\varphi(t)\|^{2n}
\<x^{\epsilon}(t)-\varphi(t),\ud B(t)\>.
\end{align*}
From \eqref{C2} we obtain
\begin{align*}
&\ud \|x^{\epsilon}(t)-\varphi(t)\|^{2(n+1)}\leq 
-2\delta (n+1)\|x^{\epsilon}(t)-\varphi(t)\|^{2(n+1)}\ud t\\
&\hspace{1cm}+\epsilon (d+2n)(n+1)\|x^{\epsilon}(t)-\varphi(t)\|^{2n}\ud t\\
&\hspace{1cm}+2(n+1)\sqrt{\epsilon}\|x^{\epsilon}(t)-\varphi(t)\|^{2n}
\<x^{\epsilon}(t)-\varphi(t),\ud B(t)\>.
\end{align*}
After a localisation argument, we can take expectation in both sides of the last differential inequality and deduce that
\begin{align*}
\tfrac{\ud}{\ud t} \bb E\left[\|x^{\epsilon}(t)-\varphi(t)\|^{2(n+1)}\right]\leq &
-2\delta (n+1)\bb E\left[\|x^{\epsilon}(t)-\varphi(t)\|^{2(n+1)}\right]\\
&+\epsilon (d+2n)(n+1)\bb E\left[\|x^{\epsilon}(t)-\varphi(t)\|^{2n}\right].
\end{align*}
By the induction hypothesis we have
\[
\bb E\left[\|x^{\epsilon}(t)-\varphi(t)\|^{2n}\right]\leq \frac{c_{n}\epsilon^{n}}{2^n\delta^n}\quad  \text{ for any } t \geq 0.
\]
Then
\begin{align*}
\tfrac{\ud}{\ud t} \bb E\left[\|x^{\epsilon}(t)-\varphi(t)\|^{2(n+1)}\right]\leq &
-2\delta (n+1)\bb E\left[\|x^{\epsilon}(t)-\varphi(t)\|^{2(n+1)}\right]\\
&+(n+1) \frac{c_{n+1}\epsilon^{n+1}}{2^n\delta^n}.
\end{align*}
From Lemma \ref{gin} we obtain
\[
\mathbb{E}\left[\|x^{\epsilon}(t)-\varphi(t)\|^{2(n+1)}\right]\leq
\frac{c_{n+1}\epsilon^{n+1}}{2^{n+1}\delta^{n+1}} \quad \text{ for any } t \geq 0.
\]
\item[ii)]
We provide the proof for the first part.
The second part follows exactly as the first part and again we left the details to the interested reader.

Let $\epsilon>0$ and $t\geq 0$ be fixed. By the Monotone Convergence Theorem it follows that
\[
\mathbb{E}\left[e^{r\frac{\left\|x^{\epsilon}(t)-\varphi(t)\right\|^2}{\epsilon}} \right]=
\sum\limits_{n=0}^{\infty}{\mathbb{E}\left[  {\frac{r^n\left\|x^{\epsilon}(t)-\varphi(t)\right\|^{2n}}{\epsilon^n n!}} \right]}.
\]

By item i) of this Proposition, we have
\[
\sum\limits_{n=0}^{\infty}{\mathbb{E}\left[  {\frac{r^n\left\|x^{\epsilon}(t)-\varphi(t)\right\|^{2n}}{\epsilon^n n!}} \right]}\leq
1+ \sum\limits_{n=1}^{\infty}{\frac{r^n c_n}{2^n \delta^n n!}}.
\]
Since  $\sum\limits_{n=1}^{\infty}{\frac{r^n c_n}{2^n \delta^n n!}}<+\infty$ when $0\leq r<\delta$, then we deduce the statement.
\item[iii)] 
We give the proof for the first part.
The second part proceeds exactly as the first part and again we left the details to the interested reader.

Let $\epsilon>0$ and $t\geq 0$ be fixed.
We will use the It\^o formula for the function $g_{\epsilon}(x)=e^{\kappa_\epsilon \|x\|^2}$, $x\in \mathbb{R}^d$,
 where $\kappa_{\epsilon}:=\frac{\delta_{\epsilon}}{\epsilon}$.
Then
\begin{align*}
\ud e^{\kappa_{\epsilon}{\|x^{\epsilon}(t)-\varphi(t)\|^2}{}}=&
-2\kappa_{\epsilon}e^{\kappa_{\epsilon}\|x^{\epsilon}(t)-\varphi(t)\|^2}
\<A^{\epsilon}(t)(x^{\epsilon}(t)-\varphi(t)),x^{\epsilon}(t)-\varphi(t)\>\ud t\\
&\hspace{-1.0cm}+\epsilon\left(2\kappa_{\epsilon}^2 e^{\kappa_{\epsilon}\|x^{\epsilon}(t)-\varphi(t)\|^2}\|x^{\epsilon}(t)-\varphi(t)\|^2+
\kappa_{\epsilon}d e^{\kappa_{\epsilon}\|x^{\epsilon}(t)-\varphi(t)\|^2}
\right)\ud t\\
&\hspace{-1.0cm}+2d\sqrt{\epsilon}\kappa_{\epsilon}e^{\kappa_{\epsilon}\|x^{\epsilon}(t)-\varphi(t)\|^2}
\< x^{\epsilon}(t)-\varphi(t),\ud B(t)\>.
\end{align*}
Using \eqref{C2} we obtain
\begin{align*}
\ud e^{\kappa_{\epsilon}{\|x^{\epsilon}(t)-\varphi(t)\|^2}{}}\leq &
-2\kappa_{\epsilon}\delta e^{\kappa_{\epsilon}\|x^{\epsilon}(t)-\varphi(t)\|^2}\|x^{\epsilon}(t)-\varphi(t)\|^2 \ud t\\
&\hspace{-1.0cm}+\epsilon\left(2\kappa_{\epsilon}^2 e^{\kappa_{\epsilon}\|x^{\epsilon}(t)-\varphi(t)\|^2}\|x^{\epsilon}(t)-\varphi(t)\|^2+
\kappa_{\epsilon}d e^{\kappa_{\epsilon}\|x^{\epsilon}(t)-\varphi(t)\|^2}
\right)\ud t\\
&\hspace{-1.0cm}+2d\sqrt{\epsilon}\kappa_{\epsilon}e^{\kappa_{\epsilon}\|x^{\epsilon}(t)-\varphi(t)\|^2}\< x^{\epsilon}(t)-\varphi(t),\ud B(t)\>.
\end{align*}
Since $0<\delta_\epsilon\leq \frac{\delta}{2}$ then
\begin{align*}\label{ttt}
\ud &e^{\kappa_{\epsilon}{\|x^{\epsilon}(t)-\varphi(t)\|^2}{}}\leq
-\kappa_{\epsilon}\delta e^{\kappa_{\epsilon}\|x^{\epsilon}(t)-\varphi(t)\|^2}\|x^{\epsilon}(t)-\varphi(t)\|^2 \ud t \\
&\hspace{1cm}+\epsilon
\kappa_{\epsilon}d e^{\kappa_{\epsilon}\|x^{\epsilon}(t)-\varphi(t)\|^2}
\ud t+
2d\sqrt{\epsilon}\kappa_{\epsilon}e^{\kappa_{\epsilon}\|x^{\epsilon}(t)-\varphi(t)\|^2}\< x^{\epsilon}(t)-\varphi(t),\ud B(t)\>.
\end{align*}
By item i) and  item ii) of this proposition and using a localisation argument  we deduce 
\[
\frac{\ud }{\ud t}\mathbb{E}\left[e^{\kappa_{\epsilon}{\|x^{\epsilon}(t)-\varphi(t)\|^2}{}}\right]\leq
\epsilon
\kappa_{\epsilon}d \mathbb{E}\left[e^{\kappa_{\epsilon}\|x^{\epsilon}(t)-\varphi(t)\|^2}\right]
\quad \textrm{ for any } t\geq 0.
\]
Now, using Lemma \ref{gin} we obtain
\[
\mathbb{E}\left[e^{\delta_{\epsilon}\frac{\left\|x^{\epsilon}(t)-\varphi(t)\right\|^2}{\epsilon}} \right]\leq e^{d\delta_{\epsilon}t}\quad \text{ for any } t \geq 0.
\]
\end{itemize}
\end{proof}

\begin{lemma}[Uniquely Ergodic]\label{ec4}
Assume that \eqref{C1} holds.
For any $\epsilon\in (0,1]$ there exists a unique invariant measure $\mu^{\epsilon}$ for the dynamics \eqref{SEDO}.
The unique probability invariant measure $\mu^{\epsilon}$ has exponential moments
\[
\int\limits_{\mathbb{R}^d}e^{\beta \|y\|}\mu^{\epsilon}(\ud y)<+\infty \quad \textrm{ for any } \beta\geq 0.
\]
In addition, for any $\beta>0$ there exist positive constants $C^{\epsilon,\beta}_1$ and $C^{\epsilon,\beta}_2 $ such that  for any initial condition $x_0\in \mathbb{R}^d$ we have
\[
\ud_{\mathrm{TV}}(x^{\epsilon}(t,x_0),\mu^{\epsilon})\leq 
C^{\epsilon,\beta}_1e^{-tC^{\epsilon,\beta}_2 }\left(
e^{\beta \|x_0\|}+\int\limits_{\mathbb{R}^d}e^{\beta \|y\|}\mu^{\epsilon}(\ud y) 
\right)
\]
for any $t\geq 0$. In particular,
\[
\lim\limits_{t\rightarrow +\infty}\ud_{\mathrm{TV}}(x^{\epsilon}(t,x_0),\mu^{\epsilon})=0.
\]
\end{lemma}
\begin{proof}
This follows immediately from Theorem $3.3.4$ page $91$ of \cite{KUL}. 
\end{proof}

\begin{lemma}\label{lne} 
Assume that \eqref{C1} holds.
Consider the matrix differential equation:
\begin{equation}
\label{EDO1}
\left\{
\begin{array}{r@{\;=\;}l}
\frac{\ud }{\ud t}\Sigma(t) & -DF(0)\Sigma(t)-\Sigma(t)(DF(0))^*+\mathrm{I}_d\quad \textrm{ for }t\geq 0,\\
\Sigma(0) & \Sigma_0,
\end{array}
\right.
\end{equation}
where $\Sigma_0$ is any $d\times d$  matrix. Then $\lim\limits_{t\rightarrow +\infty}\Big\|\Sigma(t)-\Sigma\Big\|=0$, where $\Sigma$ is the unique solution of the Lyapunov matrix equation:
\begin{equation}\label{lyapp}
DF(0)X+X(DF(0))^*=\mathrm{I}_d.
\end{equation}
\end{lemma}

\begin{proof}
Write $\Lambda=DF(0)$ and let $t\geq 0$. Notice that all eigenvalues of $\Lambda$ have positive real part. Denote by $\{\phi(t,x): t \geq 0\}$ the solution of the linear system:
\[
\left\{
\begin{array}{r@{\;=\;}l}
\tfrac{\ud}{\ud t} \phi(t) & -\Lambda \phi(t)\quad \textrm{ for } t\geq 0,\\
\phi(0) & x.
\end{array}
\right.
\]
Then $\{\phi(t,x): t \geq 0\}$ is globally asymptotic stable and consequently the Lyapunov matrix equation (\ref{lyapp}) has a unique positive definite solution $\Sigma$. From (\ref{lyapp}) it follows that $\Sigma$ is a symmetric matrix. Let
\[r(t):=\|\Sigma(t)-\Sigma\|^2=\sum\limits_{i,j=1}^{d}{(\Sigma_{i,j}(t)-\Sigma_{i,j})^2}\quad \textrm{ for any } t \geq 0.
\]
Let $\delta_{i,j}=1$ if $i=j$ and $\delta_{i,j}=0$ if $i\not=j$. Notice that
\[
\sum\limits_{k=1}^{d}{\Lambda_{i,k}\Sigma_{k,j}}+
\sum\limits_{k=1}^{d}{\Sigma_{i,k}\Lambda_{j,k}}=\delta_{i,j}\quad \textrm{ for any } i,j\in \{1,\ldots,d\}.
\]
Then
\begin{align*}
&\frac{\ud }{\ud t}r(t)=2\sum\limits_{i,j=1}^{d}{(\Sigma_{i,j}(t)-\Sigma_{i,j})\frac{\ud }{\ud t}\Sigma_{i,j}(t)}
=\\
&2\sum\limits_{i,j=1}^{d}{(\Sigma_{i,j}(t)-\Sigma_{i,j})\left(-\sum\limits_{k=1}^{d}{\Lambda_{i,k}(\Sigma_{k,j}(t)-\Sigma_{k,j})}-
\sum\limits_{k=1}^{d}{(\Sigma_{i,k}(t)-\Sigma_{i,k})\Lambda_{j,k}}\right)}.
\end{align*}
After rearrangement the sums and using \eqref{C1} we have
\begin{eqnarray*}
\frac{\ud }{\ud t}r(t) &\leq & -4\delta r(t)\quad \textrm{ for }t\geq 0,\\
r(0)&=& \|\Sigma_0-\Sigma\|^2.
\end{eqnarray*}
By Lemma \ref{gin} we deduce 
\[
\|\Sigma(t)-\Sigma\|^2\leq e^{-4\delta t}\|\Sigma_0-\Sigma\|^2 \quad \textrm{ for any } t\geq 0
\]
which implies the statement.
\end{proof}
\begin{remark}
Let $A$ be a $d$-squared matrix such that $F(x)=Ax$ satisfies \eqref{C1}.
If we take $F(x)=Ax$ in the stochastic differential equation (\ref{SEDO}), the covariance matrix associated to the solution of (\ref{SEDO}) satisfies the matrix differential equation (\ref{EDO1}) with initial datum $\Sigma_0$ the zero-matrix.
\end{remark}

\begin{lemma}\label{covmat}
Assume that \eqref{C2} holds.
The covariance matrix of $y(t)$ converge as $t\rightarrow +\infty$ to a non-degenerate covariance matrix $\Sigma$, where $\Sigma$ is the unique solution of the Lyapunov matrix equation:
\begin{equation*}
DF(0)X+X(DF(0))^*=\mathrm{I}_d.
\end{equation*}
\end{lemma}
\begin{proof}
For any $t\geq 0$, let $\Lambda(t)$ be the covariance matrix of the $y(t)$.
This matrix satisfies the matrix differential equation:
\begin{equation*}
\label{EDO5}
\left\{
\begin{array}{r@{\;=\;}l}
\frac{\ud }{\ud t}\Lambda(t) & -DF(\varphi(t))\Lambda(t)-\Lambda(t)(DF(\varphi(t)))^*+\mathrm{I}_d\quad \textrm{ for }t\geq 0,\\
\Sigma(0) & 0.
\end{array}
\right.
\end{equation*}

Let $\mathcal{K}_{x_0}:=\{x\in \mathbb{R}^d:\|x\|\leq \|x_0\|\}$. By  \eqref{C2} we have $\varphi(x,t)\in \mathcal{K}_{x_0}$ for any $x\in \mathcal{K}_{x_0}$ and $t\geq 0$. Since $F\in \mathcal{C}^2(\mathbb{R}^d, \mathbb{R}^d)$ there exists a constant $L:=L_{x_0}>0$
such that $\|DF(x)-DF(0)\|\leq L\|x\|$ for any $x\in \mathcal{K}_{x_0}$.

Take $\eta>0$ and $\tau_{\eta}:=\frac{1}{\delta}\ln\left(\frac{\|x_0\|}{\eta}\right)$ such that
\begin{equation}\label{lipt}
\|DF(\varphi(t))-DF(0)\|\leq L\|\varphi(t)\|\leq L\|x_0\|e^{-\delta t}\leq L\eta
\end{equation}
for every $t\geq \tau_{\eta}$. Call $\tau:=\tau_{\eta}$. Then,
\begin{equation*}
\left\{
\begin{array}{r@{\;=\;}l}
\frac{\ud }{\ud t}\Delta(t) & -DF\left(0\right)\Delta(t)-\Delta(t)(DF\left(0\right))^*+ \mathrm{I}_d \quad \textrm{ for } t\geq 0,\\
\Delta(0) & \Lambda(\tau).
\end{array}
\right.
\end{equation*}

Let $\Pi(t)=\Lambda(t+\tau)-\Delta(t)$, $t\geq 0$. Then
\begin{equation*}
\label{EDO6}
\left\{
\begin{array}{r@{\;=\;}l}
\frac{\ud }{\ud t}\Lambda(t) & -DF(\varphi(t+\tau))\Pi(t)-\Pi(t)(DF(\varphi(t+\tau)))^*+g(t,\tau)\quad \textrm{ for }t\geq 0,\\
\Pi(0) & 0,
\end{array}
\right.
\end{equation*}
where $g(t,\tau):=(DF(0)-DF(\varphi(t+\tau)))\Delta(t)+\Delta(t)(DF(0)-DF(\varphi(t+\tau)))^*$ for  $t\geq 0$.
Therefore
\begin{align*}
&\frac{\ud }{\ud t}\|\Pi(t)\|^2=2\sum\limits_{i,j=1}^{d}{\Pi_{i,j}(t)\frac{\ud }{\ud t}\Pi_{i,j}(t)}
=\\
&2\sum\limits_{i,j=1}^{d}{\Pi_{i,j}(t)
\left(-\sum\limits_{k=1}^{d}{DF(\varphi(t+\tau))_{i,k}\Pi_{k,j}(t)}-
\sum\limits_{k=1}^{d}{\Pi_{i,k}(t)DF(\varphi(t+\tau))_{j,k}}+R_{i,j}(t)\right)},
\end{align*}
where
\begin{align*}
R_{i,j}(t)=&\sum\limits_{k=1}^{d}{(DF(0)_{i,k}-DF(\varphi(t+\tau))_{i,k})\Delta_{k,j}(t)}\\
&+\sum\limits_{k=1}^{d}{\Delta_{i,k}(t)(DF(0)-DF(\varphi(t+\tau))_{j,k})^*}.
\end{align*}
From \eqref{C2} we deduce 
\begin{align*}
\frac{\ud }{\ud t}\|\Pi(t)\|^2\leq -4\delta\|\Pi(t)\|^2+\sum\limits_{i,j=1}^{d}{\Big|\Pi_{i,j}(t)R_{i,j}(t)\Big |}
\quad \textrm{ for any } t\geq 0.
\end{align*}
Moreover, using Lipschitz condition (\ref{lipt}) and Lemma \ref{lne} we obtain
\[
\sum\limits_{i,j=1}^{d}\Big|\Pi_{i,j}(t)R_{i,j}(t)\Big |\leq C\eta+ C\eta \|\Pi(t)\|^2\quad \textrm{ for any } t\geq 0,
\]
where $C$ is a positive constant. {\em A priori} we can take $0<\eta<\frac{3\delta}{C}$  and using Lemma \ref{gin} we obtain
\[
\|\Pi(t)\|^2\leq \frac{C\eta}{\delta}{(1-e^{-\delta t})}<\frac{C\eta}{\delta}\quad \textrm{ for any } t\geq 0.
\]
Letting $t\rightarrow +\infty$ and after $\eta\rightarrow 0$
we deduce  that $\lim\limits_{t\rightarrow +\infty}\|\Pi(t)\|^2=0$ which together with Lemma \ref{lne} imply
$\lim\limits_{t\rightarrow +\infty}\Lambda(t)=\Sigma$.
\end{proof}

\begin{lemma}[Gronwall Inequality]\label{gin}
Let $T>0$ be fixed. Let $g:[0,T]\rightarrow \mathbb{R}$ be a  $\mathcal{C}^1$-function and $h:[0,T]\rightarrow \mathbb{R}$ be a  $\mathcal{C}^0$-function.
If
\[
\frac{\ud}{\ud t}g(t)\leq -ag(t)+h(t)\quad \textrm{ for any } t\in [0,T],
\] 
where $a\in \mathbb{R}$, and the derivative at $0$ and $T$ are understanding as the right and left derivatives, respectively.  Then 
\[
g(t)\leq e^{-at}g(0)+e^{-at}\int\limits_{0}^{t}{e^{as}h(s)}\ud s \quad \textrm{ for any }
t\in[0,T].
\]
Moreover,
\[
|g(t)|\leq e^{-at}|g(0)|+\frac{(1-e^{-at})}{a}\max\limits_{s\in[0,t]}|h(s)|
\quad \textrm{ for any }
t\in[0,T].
\]
\end{lemma}

\section*{Acknowledgments}
G. Barrera would like to thank to CONACyT-M\'exico for the post-doctorate grant $2016$-$2017$ at the Department of Probability and Statistics, CIMAT. He acknowledges financial support from CNPq-Brazil and the Instituto de Matem\'atica Pura e Aplicada, IMPA for the grant for supporting his Ph.D. studies and the Summer postdoctoral stay $2017$, where part of this work was done. Also, he would like to express his gratitude to FORDECyT-CONACyT-M\'exico for the travel support to the Summer postdoctoral stay $2017$ at IMPA.

\markboth{}{References}
\bibliographystyle{amsplain}

\begin{thebibliography}{40}
\addcontentsline{}{}{}
\bibitem{ACM} A. Arnold, J. Carrillo \& C. Manzini. \textit{Refined long-time asymptotics for some polymeric fluid flow models}, Communications in Mathematical Sciences, Volume $8$, Number $3$, $2010$, $763$--$782$.
\bibitem{BEGK} A. Bovier, M. Eckhoff, V. Gayrard \& M. Klein. \textit{Metastability in reversible diffusion processes I.
Sharp asymptotics for capacities and exit times}, Journal European Mathematical Society, Volume $6$, $2004$, $399$--$424$.
\bibitem{BEGK1} A. Bovier, V. Gayrard \& M. Klein. \textit{Metastability in reversible diffusion processes II.
Precise asymptotics for small eigenvalues}, Journal European Mathematical Society, Volume $7$, $2005$, $69$--$99$.
\bibitem{EZ}  A. Eberle \& R. Zimmer. \textit{Sticky couplings of multidimensional diffusions with different drifts}, 
to appear in Annales de l'Institut Henri Poincar\'e (B) Probability and Statistics.
Available at {\tt arXiv:1612.06125}.
\bibitem{GOV}  A. Galves, E. Olivieri \& M. Vares. \textit{Metastability for a class of dynamical systems subject
to small random perturbations}, Annals of Probability, Volume $15$, $1987$, $1288$--$1305$.
\bibitem{KUL}  A. Kulik. \textit{Ergodic behavior of Markov processes with applications to limit theorems}, De Gruyter Studies in Mathematics, $2008$.
\bibitem{JBLO} B. Jourdain, C. Le Bris, T. Leli\`evre \& F. Otto,
\textit{Long-time asymptotics of a multiscale model for polymeric fluid flows}, Archive for Rational Mechanics and Analysis, Volume $181$, Number $1$, $2006$, $97$--$148$.
\bibitem{BL}  B. Lachaud. \textit{Cut-off and hitting times of a sample of Ornstein-Uhlenbeck process and its average}, Journal of Applied Probability, Volume $42$, Number $4$,  $2005$, $1069$--$1080$.
\bibitem{MS4} B. Matkowsky \& Z. Schuss. \textit{The exit problem for randomly perturbed dynamical systems}, SIAM Journal on Applied Mathematics, Volume $33$, Number $2$,  $1977$, $365$--$382$.
\bibitem{HWANG} C. Hwang. \textit{Laplace's method revisited: weak convergence of probability measures}, Annals of Probability, Volume $8$, Number $6$, $1980$, $1177$--$1182$.
\bibitem{HHS} C. Hwang, S. Hwang-Ma \& S. Sheu. \textit{Accelerating Gaussian diffusions}, Annals of Applied Probability, Volume $3$, Number $3$, $1993$, $897$--$913$.
\bibitem{KN} C. Kipnis \& C. Newman. \textit{The metastable behavior of infrequently observed, weakly random, one-dimensional diffusion processes}, SIAM Journal on Applied Mathematics, Volume $45$, Number $6$, $1985$, $972$--$982$.
\bibitem{VI1} C. Villani. \textit{Hypocoercivity}, Memoirs of the American Mathematical Society, Volume $202$, Number $950$, $2009$.
\bibitem{AD} D. Aldous \& P. Diaconis. \textit{Shuffling cards and stopping times}, American Mathematical Monthly, Volume $93$, Number $5$, $1986$, $333$--$348$.
\bibitem{AD1} D. Aldous \& P. Diaconis. \textit{Strong uniform times and finite random walks}, Advances in Applied Mathematics, Volume $8$, Number $1$, $1987$, $69$--$97$.
\bibitem{LPW}  D. Levine,  Y. Peres \& E.  Wilmer. \textit{Markov Chains and Mixing Times}, American Mathematical Society, Providence Rhode Island, $2009$.
\bibitem{SV} D. Stroock \& S. Varadhan. \textit{Multidimensional diffusion processes}, Springer-Verlag, Berlin, $1997$.
\bibitem{PSXZ} E. Priola, A. Shirikyan, L. Xu \& J. Zabczyk. \textit{Exponential ergodicity and regularity for equations with
L\'evy noise}, Stochastic Processes and their Applications, Volume $122$, $2012$,
$106$--$133$.
\bibitem{OV} E. Olivieri \& M. Vares. \textit{Large deviations and metastability},
Cambridge University Press, $2004$.
\bibitem{BR} F. Bouchet \& J. Reygner. \textit{Generalisation of the Eyring-Kramers transition rate formula to irreversible diffusion processes}, Annales Henri Poincar\'e, Volume $17$, $2016$, $3499$--$3532$.
\bibitem{BA}  G. Barrera. \textit{Abrupt convergence for a family of Ornstein Uhlenbeck processes}, Brazilian Journal of Probability and Statistics, Volume $32$, Number $1$,  $2018$, $188$--$199$.
\bibitem{BJ} G. Barrera \& M. Jara. \textit{Abrupt convergence for stochastic small perturbations of one dimensional dynamical systems}, Journal of Statistical Physics, Volume $163$, Number $1$, $2016$, $113$--$138$.
\bibitem{CSC}  G. Chen \&  L. Saloff-Coste. \textit{The cutoff phenomenon for ergodic Markov processes}, Electronic Journal of Probability, Volume $13$, Number $3$,  $2008$, $26$--$78$.
\bibitem{BY} J. Barrera \& B. Ycart, \textit{Bounds for left and right window cutoffs}, ALEA-Latin American Journal of Probability and Mathematical Statistics, Volume $11$, Number $2$, $2014$, $445$--$458$.
\bibitem{BBF} J. Barrera, O. Bertoncini \& R. Fern\'andez. \textit{Abrupt convergence and escape behavior for birth and death chains}, Journal of Statistical Physics, Volume $137$, Number $4$, $2009$, $595$--$623$.
\bibitem{AHWANG} K. Athreya \& C. Hwang. \textit{Gibbs measures asymptotics}, Indian Journal of Statistics, Volume $72$-A, Part $1$, $2018$, $191$--$207$.
\bibitem{PE} L. Perko. \textit{Differential equations and dynamical systems}, Third Edition, Springer, $2001$.
\bibitem{SA} L. Saloff-Coste. \textit{Random walks on finite groups}, Probability \& Discrete Structures, Springer, $2004$, $263$--$346$.
\bibitem{FW1}  M. Freidlin \& A. Wentzell. \textit{On small random perturbations of dynamical systems}, Russian Mathematical Surveys, Volume $25$, $1970$, $1$--$55$.
\bibitem{FW2}  M. Freidlin \& A. Wentzell. \textit{Some problems concerning stability under small random perturbations}, Theory Probability Applied, Volume $17$, $1972$, $269$--$283$.
\bibitem{FW}  M. Freidlin \& A. Wentzell. \textit{Random perturbations of dynamical systems}, Springer, $2012$.
\bibitem{MD} M. Day. \textit{Exponential leveling of stochastically perturbed dynamical systems}, SIAM Journal on Mathematical Analysis, Volume $13$, $1982$, $532$--$540$.
\bibitem{MD1} M. Day. \textit{On the exponential exit law in the small parameter exit problem}, Stochastics, Volume $8$, $1983$,
$297$--$323$.
\bibitem{DI} P. Diaconis. \textit{The cut-off phenomenon in finite Markov chains}, Proceedings of the National Academy of Sciences, USA, Volume $93$, $1996$, $1659$--$1664$.
\bibitem{HA} P. Hartman. \textit{On local homeomorphisms of Euclidean spaces}, Bulletin of Mexican Mathematical Society, Volume $5$, $1960$, $220$--$241$.
\bibitem{LATI} P. Lancaster \& M. Tismenetsky, \textit{The theory of matrices with applications}, Academic Press, Computer Science and Scientific Computing, Second Edition, $1985$.
\bibitem{MS1} R. Maier \& D. Stein. \textit{Escape problem of irreversible systems}, Physical Review E, Volume $48$, Number $2$,  $1993$, $931$--$938$.
\bibitem{MS2} R. Maier \& D. Stein. \textit{Transition-Rate theory for nongradient drift fields}, Physical Review Letters, Volume $69$, Number $26$,  $1993$, $3691$--$3695$.
\bibitem{MS3} R. Maier \& D. Stein. \textit{Limiting exit location distributions in the stochastic exit problem}, SIAM Journal on Applied Mathematics, Volume $57$, Number $3$,  $1997$, $752$--$790$.
\bibitem{JA} S. Jacquot. \textit{Asymptotic behavior of the second eigenvalue of Kolmogorov's process (in French)}, Journal of Multivariate Analysis, Volume $40$, Number $2$, $1992$, $335$--$347$.
\bibitem{MY}  S. Mart\'{\i}nez \& B. Ycart. \textit{Decay rates and cutoff for convergence and hitting times of Markov chains with countably infinite state space}, Advances in Applied Probability, Volume $33$, Number $1$,  $2001$, $188$--$205$.
\bibitem{LNP} T. Leli\`evre, F. Nier  \& G. Pavliotis. \textit{Optimal non-reversible linear drift for the convergence to equilibrium of a diffusion}, Journal of Statistical Physics, Volume $152$, Number $2$, $2013$, $237$--$174$.
\bibitem{BRS}  V. Bogachev, M. R\"ockner \& S.  Shaposhnikov. \textit{Distances between transition probabilities of diffusions and applications to nonlinear Fokker-Planck-Kolmogorov equations}, Journal of Functional Analysis, Volume $271$, Number $5$,  $2016$, $1262$--$1300$.
\bibitem{SI} W. Siegert. \textit{Local Lyapunov exponents: sublimiting growth rates of linear random differential equations}, Springer, $2009$.
\bibitem{BK} Y. Baek \& Y. Kafri. \textit{Singularities in large deviation functions}, Journal of Statistical Mechanics: Theory and Experiment, $P08026$, $2015$, $1$--$31$.
\bibitem{YB1} Y. Bakhtin. \textit{Small noise limit for diffusions near heteroclinic networks}, Dynamical Systems, Volume $25$, Number $3$, $2010$, $413$--$431$.
\bibitem{YB2} Y. Bakhtin. \textit{Noisy heteroclinic networks}, Probability Theory and Related Fields, Volume $150$, $2011$, $1$--$42$.
\bibitem{KALS}  Y. Kabanov, R. Liptser \& A. Shiryaev. \textit{On the variation distance for probability measures defined on a filtered space}, Probability Theory and Related Fields, Volume $71$, Number $1$,  $1986$, $19$--$35$.
\bibitem{MS5}  Z. Schuss \& B. Matkowsky. \textit{The exit problem: a new approach to diffusion across potential barriers}, SIAM Journal on Applied Mathematics, Volume $36$, Number $3$,  $1979$, $604$--$623$.
\end{thebibliography}

\end{document}